\newtheorem{theorem}{Theorem}[section]
\newtheorem{lemma}[theorem]{Lemma}
\newtheorem{example}[theorem]{Example}
\newtheorem{assumption}[theorem]{Assumption}
\newtheorem{remark}[theorem]{Remark}
\newcommand{\R}{\mathbb{R}}
\newcommand{\E}{\mathbb{E}}
\newcommand{\bR}{\mathbb{R}}
\begin{document}

\title{Deep BSVIEs Parametrization and Learning-Based Applications}\author{Nacira Agram$^{1}$ \and Giulia Pucci$^{1}$}
\date{\today}
\maketitle

\footnotetext[1]{Department of Mathematics, KTH Royal Institute of Technology, 100 44 Stockholm, Sweden. Email: nacira@kth.se, pucci@kth.se. Supported by the Swedish Research Council (VR 2020-04697).}

\begin{abstract}
We study the numerical approximation of backward stochastic Volterra integral equations (BSVIEs) and their reflected extensions, which naturally arise in problems with time inconsistency, path dependent preferences, and recursive utilities with memory. These equations generalize classical BSDEs by involving two dimensional time structures and more intricate dependencies.

We begin by developing a well posedness and measurability framework for BSVIEs in product probability spaces. Our approach relies on a representation of the solution as a parametrized family of backward stochastic equations indexed by the initial time, and draws on results of Stricker and Yor to ensure that the two parameter solution is well defined in a joint measurable sense.

We then introduce a discrete time learning scheme based on a recursive backward representation of the BSVIE, combining the discretization of Hamaguchi and Taguchi with deep neural networks. A detailed convergence analysis is provided, generalizing the framework of deep BSDE solvers to the two dimensional BSVIE setting. Finally, we extend the solver to reflected BSVIEs, motivated by applications in delayed recursive utility with lower constraints.
\end{abstract}

\noindent\textbf{Keywords:} BSVIEs, RBSVIEs, Stricker-Yor measurability, deep learning, neural network solvers.

\section{Introduction}

Backward stochastic Volterra integral equations (BSVIEs) extend the classical theory of backward stochastic differential equations (BSDEs) by introducing two time variables. This extension enables the modeling of systems with memory, path-dependence, and time inconsistent preferences. Originally introduced by Yong \cite{yong2006backward}, BSVIEs have become central tools in the study of recursive utilities, non-Markovian risk measures, and dynamic systems with delay effects; see also \cite{yong2007continuous, agram2019dynamic}.

A typical Type-I BSVIE is given by
\[
Y(t) = g(t, X(t), X(T)) + \int_t^T f(t, s, X(t), X(s), Y(s), Z(t,s))\, ds - \int_t^T Z(t,s)\, dB(s),
\]
where \( X \) solves a forward SDE and \( g, f \) are the terminal condition and generator, respectively. In contrast to BSDEs, the control process \( Z(t,s) \) depends on both \( t \) and \( s \), and the adaptedness of \( Y(t) \) may fail. This temporal asymmetry breaks the backward flow and prevents the application of classical PDE methods.

A key analytical difficulty in solving BSVIEs lies in the regularity properties of \( Z(t,s) \). The It\^o-Ventzel formula shows that unless \( Z(t,s) \) is sufficiently smooth in the "frozen" time variable \( t \), the stochastic integral
\[
Y(t) = \int_0^t Z(t,s) \, dB(s)
\]
is itself non differentiable in \( t \). This creates serious challenges for both analysis and numerical approximation. A particular case in which the integral \(Y(t)\) becomes a semimartingale despite the two time structure has been studied by Agram~\cite{agram2019dynamic}, using an auxiliary SDE to regularize the dependence on \(t\).  To illustrate this, we provide a few canonical examples. 

First, consider a piecewise smooth kernel: 
\[
Z(t,s) = 
\begin{cases} 
t^2 \sin(s), & t < 1, \\
t \log(s+1), & t \geq 1.
\end{cases}
\]
Here, \( Z \) is continuous but not differentiable at \( t = 1 \), and this irregularity is inherited by \( Y(t) \).

Next, consider the singular kernel \( Z(t,s) = |t-s|^\alpha \), with \( 0 < \alpha < \frac{1}{2} \). The derivative \( \partial_t Z(t,s) \) does not exist due to the singularity at \( t = s \), and the integral \( Y(t) = \int_0^t |t-s|^\alpha\, dB(s) \) exhibits the same lack of regularity.

Finally, take the oscillatory kernel \( Z(t,s) = \sin\left( \frac{1}{t - s} \right) \mathbf{1}_{\{s < t\}} \). The violent oscillations near the diagonal make \( Y(t) \) highly irregular.

\begin{figure}[H]
\centering
\includegraphics[width=0.85\textwidth]{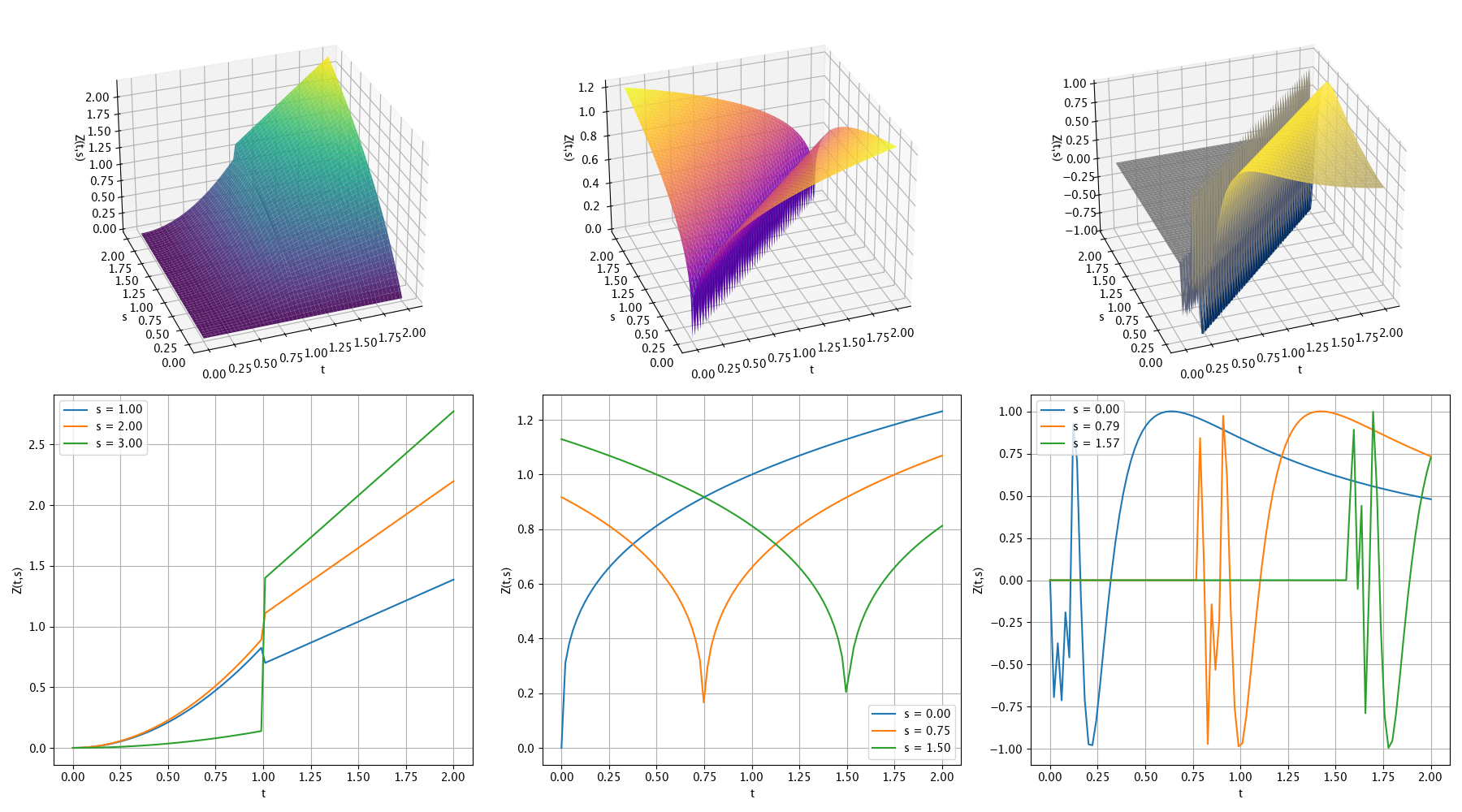}
\caption{Surface plots and slices of the kernel functions \( Z(t,s) \). 
Left: Piecewise smooth kernel; 
Middle: Singular power law kernel \( |t-s|^\alpha \); 
Right: Oscillatory kernel \( \sin(1/(t-s)) \).}
\label{fig:kernel_examples}
\end{figure}

These examples demonstrate that classical differentiability based approaches are often inadequate for BSVIEs. As a remedy, one prevailing strategy in the literature is to interpret BSVIEs as families of BSDEs parametrized by the initial time \( t \). This idea is central in the work of Yong  \cite{yong2006bsde, yong2008well}, where existence and uniqueness results are established via Picard iteration on the family of BSDEs indexed by \( t \). This representation is not only useful analytically, it also enables algorithmic implementation by leveraging BSDE methods at each discretized time step.

A major theoretical development was provided by Wang and Yong \cite{wang2019backward}, who derived a nonlinear Feynma-Kac representation for Type-I BSVIEs. They showed that the solution can be linked to a path dependent PDE defined on the time simplex \(\Delta_T := \{(t,s) \in [0,T]^2 : t \le s\}\), satisfying:
\[
Y(s) = u(s,s,X(s),X(s)), \qquad Z(t,s) = \nabla_x u(t,s,X(t),X(s)) \cdot \sigma(s, X(s)),
\]
where \( u(t,s,x,\xi) \) solves a nonlocal PDE with terminal condition at \( s = T \). While this insight is conceptually powerful, the resulting PDE is generally intractable, particularly in high dimensions or irregular settings.

On the numerical side, existing methods for BSVIEs are still limited. Hamaguchi and Taguchi \cite{hamaguchi2023approximations}  developed a dynamic programming scheme tailored specifically for Type-II BSVIEs,exploiting their two-time structure via a recursive time grid.

Deep learning methods have shown great potential for solving BSDEs. The DeepBSDE framework of Han, Jentzen, and E \cite{han2018solving}, later extended by Hur\'e, Pham, and Warin \cite{hure2020deep}, uses neural networks to approximate the solution pair \( (Y,Z) \), trained via stochastic optimization to minimize a loss function derived from the BSDE dynamics. These methods are flexible, mesh free, and in principle dimension independent, making them particularly well suited to high dimensional stochastic control and finance applications. More recently, Gnoatto, Trillos, and Andersson \cite{andersson2025deep} proposed a forward in time neural solver for BSVIEs, where a single stage network is trained to approximate the solution processes.

In this paper, we develop a new deep learning algorithm tailored to Type-I BSVIEs. Our method builds on the BSDE reformulation by discretizing the two time integral equation and applying a backward recursive learning scheme. The architecture is designed to exploit the parametric dependence on the first time variable \( t \), and jointly learns the fields \( Y(t) \) and \( Z(t,s) \) by fitting conditional expectations at each stage. This structure respects the temporal causality and facilitates convergence analysis through discrete martingale projections.

Our contributions are threefold. First, we construct a discrete time neural scheme specifically adapted to the BSVIE structure, including its reflected variant, by employing backward dynamic programming over a two time grid. Second, we establish a convergence theorem that couples the time discretization error with the neural approximation error, thereby extending the theoretical analysis of \cite{hure2020deep} to the BSVIE setting. Third, we rigorously ensure the measurability of the stochastic integral \( \int_0^t Z(t,s)\, dB(s) \) in the extended product space \([0,T]^2 \times \Omega\), using a result of Stricker and Yor \cite{stricker1978calcul}. This guarantees the well posedness of the learned solution and supports the validity of the loss function in our learning algorithm.\vspace{1em}

The paper is organized as follows. Section~\ref{sec:prelim} introduces the model setup and defines the relevant function spaces. Section~\ref{sec:measurability} presents the measurability results in the product probability space. Section~\ref{sec:scheme} details the discretized solver and neural architecture. Section~\ref{sec:convergence} provides a complete convergence analysis. Section~\ref{sec:numerics} illustrates the effectiveness of our method through numerical experiments. Finally, Section~\ref{sec:reflected} extends the algorithm to handle reflected BSVIEs (RBSVIEs).

\section{Analytical Framework and Structural Properties}
\label{sec:prelim}
In this section, we introduce the structural and analytical ingredients underlying our study. We focus exclusively on Type-I BSVIEs, a class of equations where the generator depends on a frozen time parameter \( t \) and the integration variable \( s \), but the unknown process \( Y \) only appears under the integral in the form \( Y(s) \), and not as \( Y(t) \). This restriction simplifies the analysis and aligns with recent developments in numerical and learning-based methods. Throughout the remainder of the paper, the term BSVIE will always refer to Type-I equations unless explicitly stated otherwise.
\subsection{Spaces for BSVIE Solutions}

We define the following function spaces for $\mathbb{R}$-valued adapted processes:

\begin{itemize}
    \item $\mathcal{S}^2$ is the space of $\mathbb{F}$-adapted processes $(Y(t))_{t \in [0,T]}$ such that
    \[
    \|Y\|^2_{\mathcal{S}^2} \coloneqq \mathbb{E} \left[\sup_{t \in [0,T]} |Y(t)|^2 \right] < \infty.
    \]
    
    \item $\mathcal{H}^2$ is the space of $\mathbb{F}$-adapted processes $(v(t))_{t \in [0,T]}$ satisfying
    \[
    \|v\|^2_{\mathcal{H}^2} \coloneqq \mathbb{E} \left[\int_0^T |v(s)|^2 \, ds \right] < \infty.
    \]

    \item ${L}^2$ is the space of jointly measurable processes $(Z(t,s))_{(t,s) \in [0,T]^2}$ such that for almost every $t$, the map $s \mapsto Z(t,s)$ is $\mathbb{F}$-adapted and
    \[
    \|Z\|^2_{{L}^2} \coloneqq \mathbb{E} \left[\int_0^T \int_t^T |Z(t,s)|^2 \, ds \, dt \right] < \infty.
    \]

    \item $\mathcal{K}^2$ is the space of processes $K$ on $[0,T]^2 \times \Omega$ such that $u \mapsto K(t,u)$ is $\mathbb{F}$-adapted, continuous, and non-decreasing with $K(t,0) = 0$, and $t \mapsto K(t,T)$ lies in $\mathcal{H}^2$.
\end{itemize}

All spaces above are Hilbert spaces under their respective norms.

\subsection{Volterra BSVIE Setup}

Let $(\Omega, \mathcal{F}, \mathbb{F}, \mathbb{P})$ be a filtered probability space carrying a $d$-dimensional Brownian motion $B = (B(t))_{t \in [0,T]}$. The filtration $\mathbb{F} = (\mathcal{F}_t)_{t \in [0,T]}$ is the usual augmentation. We denote the time simplex:
\[
\Delta_T \coloneqq \{ (t,s) \in [0,T]^2 : 0 \le t \le s \le T \}.
\]

We study a decoupled forward-backward system. The forward component is the classical SDE:
\begin{equation}\label{eq:SDE}
X(t) = x + \int_0^t b(s, X(s))\, ds + \int_0^t \sigma(s, X(s))\, dB(s), \quad t \in [0,T],
\end{equation}
with initial condition $x \in \mathbb{R}^n$.

Given $X$, we consider the BSVIE:
\begin{equation}\label{eq:BSVIE}
Y(t) = g(t, X(t), X(T)) + \int_t^T f(t, s, X(t), X(s), Y(s), Z(t,s)) \, ds - \int_t^T Z(t,s) \, dB(s), \quad t \in [0,T].
\end{equation}

\begin{assumption}
\label{ass}
\begin{enumerate}[label=(\alph*)] 
\item The coefficients
\[
b: [0,T] \times \mathbb{R}^n \to \mathbb{R}^n, \quad \sigma: [0,T] \times \mathbb{R}^n \to \mathbb{R}^{n \times d}
\]
are continuous and satisfy:
\begin{align*}
|b(s,x) - b(s,x')| + \|\sigma(s,x) - \sigma(s,x')\| &\le L |x - x'|, \\
|b(s,x)| + \|\sigma(s,x)\| &\le L (1 + |x|).
\end{align*}

\item The maps
\[
g: [0,T] \times \mathbb{R}^n \times \mathbb{R}^n \to \mathbb{R}^m, \quad f: \Delta_T \times \mathbb{R}^n  \times \mathbb{R}^n  \times \mathbb{R}^m \times \mathbb{R}^{m \times d} \to \mathbb{R}^m
\]
are continuous and satisfy, for some $L > 0$:
\begin{align*}
|g(t,\xi,x) - g(t',\xi',x')| &\le L \left( |t - t'|^{1/2} + |\xi - \xi'| + |x - x'| \right), \\
|f(t,s,\xi,x,y,z) - f(t',s,\xi',x',y',z')| &\le L \left( |t - t'|^{1/2}+  |\xi - \xi'| \right.\\ &\left.+ |x - x'| + |y - y'| + \|z - z'\| \right),\\
|f(t,s,\xi,x,0,0) | &\le L \left( 1+|\xi| + |x|  \right).
\end{align*}
\end{enumerate}
\end{assumption}

\paragraph{Existence and Uniqueness.}
Under Assumption \ref{ass},  the SDE \eqref{eq:SDE} admits a unique strong solution, and the BSVIE \eqref{eq:BSVIE} has a unique adapted solution pair \((Y,Z) \in \mathcal{H}^2 \times {L}^2\) (cf. \cite{yong2008well}). Moreover, 
it holds \[
\sup_{t \in [0,T]} \mathbb{E}|Y(t)|^2 + \mathbb{E} \int_0^T \int_t^T |Z(t,s)|^2 ds dt < \infty.
\] 
\subsection{Feynman-Kac Representation and Neural Approximation}
\label{sec:Feynman}
Wang and Yong~\cite{wang2019backward} extended the classical Feynman--Kac formula to the BSVIEs setting and showed that the solution can be represented in terms of a path-dependent PDE defined on the time simplex $\Delta_T$.\\
More precisely, there exists a deterministic function $u : \Delta_T \times \mathbb{R}^n \times \mathbb{R}^n \to \mathbb{R}^m$ such that:
\begin{align}
Y(t) &= u(t, t, X(t), X(t)), \quad t \in [0,T], \notag \\
Z(t,s) &= \nabla_x u(t, s, X(t), X(s)) \sigma(s, X(s)), \quad (t,s) \in \Delta[0,T],
\label{eq:FK-representation}
\end{align}
where $u$ is the solution of the following PDE:
\[
\begin{cases}
\partial_s u(t,s,\xi,x) + \dfrac{1}{2} \sigma(s,x)^\top D^2_x u(t,s,\xi,x)\, \sigma(s,x) + D_x u(t,s,\xi,x)\, b(s,x) \\
\quad + f\left(t, s, \xi, x, u(s,s,x,x), D_x u(t,s,\xi,x)\, \sigma(s,x)\right) = 0, \\
\hfill (t,s,\xi,x) \in \Delta_T \times \mathbb{R}^n \times \mathbb{R}^n, \\
u(t,T,\xi,x) = g(t,\xi,x).
\end{cases}
\]
The Feynman-Kac representation \eqref{eq:FK-representation} provides an analytic link between the solution of BSVIEs and a class of PDEs: 
the solution  $Y(t)$ depends on the current time and state $(t, X(t))$, while $Z(t,s)$ depends on both the current and future state pairs $(t, s, X(t), X(s))$. This insight motivates our neural network approach: rather than discretizing the PDE, we can directly approximate these underlying deterministic functions using parameterized function approximators. Specifically, we propose a deep learning algorithm, \textbf{DeepBSVIE}, which approximates the solution pair \((Y(t), Z(t,s))\) using parameterized neural networks informed by \eqref{eq:FK-representation}. We model:
\[
Y(t) \approx Y_\xi(t, X(t)), \quad Z(t,s) \approx Z_\eta(t, s, X(t), X(s)),
\]
where $\theta = ( \xi, \eta)$ denotes trainable parameters. This architecture reflects the analytic structure revealed by the Feynman-Kac representation and preserves the essential two-time dependency and forward-backward coupling structure of BSVIEs. By learning these functions from simulated trajectories rather than solving the PDE directly, our method circumvents the curse of dimensionality while maintaining theoretical consistency with the underlying analytical framework. The detailed algorithm and training procedure are presented in Section \ref{sec:scheme}.

\section{Measurability in Product Spaces for BSVIEs}
\label{sec:measurability}
In the Volterra setting, one must ensure that the solution maps
\[
  (t, u, \omega) \;\longmapsto\; \bigl(Y_u(t, \omega),\,Z_u(t, \omega)\bigr)
\]
are measurable with respect to the product $\sigma$-algebra \(\mathcal{B}([0,T]^2) \otimes \mathcal{F}\). 
To guarantee this, we employ measurability results for stochastic integrals with jointly measurable integrands.

For simplicity of presentation and notation, we restrict ourselves to the one-dimensional case throughout this section.

Before presenting the main results of this section, we clarify their role in the overall framework.

In the context of BSVIEs, the two-parameter structure of the solution \((Y_u(t), Z_u(t))\) requires that we rigorously verify its measurability on the product space \([0,T]^2 \times \Omega\). This is essential not only for the mathematical well posedness of the problem but also for the practical implementation of learning algorithms, which rely on sampling and regression over this domain.

To this end, we establish measurability of conditional expectations and stochastic integrals with parameter dependence, using classical results adapted to our setting.

\begin{lemma}[Measurable Conditional Expectations]\label{lem-SY}
Let 
\[
  X \;:\; [0,T]\times\Omega \;\longrightarrow\; \R
\]
be \(\mathcal{B}([0,T])\otimes\mathcal{F}\)-measurable, non-negative, and satisfy \(\E[X_u(\cdot)]<\infty\) for each \(u\). Then the map
\[
  (u,\omega)\;\longmapsto\;
  Y_u(\omega)\;:=\;\E\bigl[X_u(\cdot)\mid\mathcal{G}\bigr](\omega)
\]
is also \(\mathcal{B}([0,T])\otimes\mathcal{G}\)-measurable, where \(\mathcal{G}\subseteq\mathcal{F}\).
\end{lemma}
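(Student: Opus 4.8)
The plan is to prove Lemma \ref{lem-SY} by the standard measure-theoretic "monotone class / functional monotone class" argument, first establishing the claim for indicator (or simple) integrands and then bootstrapping to general non-negative $X$ via monotone approximation. I would begin by reducing to the case where $X$ is bounded: write $X = \lim_n X \wedge n$ as an increasing limit of $\mathcal{B}([0,T])\otimes\mathcal{F}$-measurable bounded functions, note that conditional expectation commutes with increasing limits (conditional monotone convergence, applied for each fixed $u$), and observe that a pointwise-in-$(u,\omega)$ increasing limit of $\mathcal{B}([0,T])\otimes\mathcal{G}$-measurable maps is again $\mathcal{B}([0,T])\otimes\mathcal{G}$-measurable. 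So it suffices to treat bounded $X$.

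For bounded $X$, I would use a functional monotone class theorem. Let $\mathcal{H}$ be the collection of bounded $\mathcal{B}([0,T])\otimes\mathcal{F}$-measurable functions $X$ for which $(u,\omega)\mapsto \E[X_u\mid\mathcal{G}](\omega)$ (with a fixed choice of regular version, or rather the version built explicitly below) is $\mathcal{B}([0,T])\otimes\mathcal{G}$-measurable. This $\mathcal{H}$ is a vector space, contains constants, and is closed under bounded increasing limits by the argument above. It remains to check $\mathcal{H}$ contains a multiplicative generating class: take $X(u,\omega) = \varphi(u)\,\mathbf{1}_A(\omega)$ with $\varphi$ bounded Borel on $[0,T]$ and $A\in\mathcal{F}$; then $\E[X_u\mid\mathcal{G}](\omega) = \varphi(u)\,\E[\mathbf{1}_A\mid\mathcal{G}](\omega)$, which is a product of a Borel function of $u$ and a $\mathcal{G}$-measurable function of $\omega$, hence jointly $\mathcal{B}([0,T])\otimes\mathcal{G}$-measurable. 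Since such products (with $A$ ranging over $\mathcal{F}$, $\varphi$ over bounded Borel functions) generate $\mathcal{B}([0,T])\otimes\mathcal{F}$ and are closed under multiplication, the functional monotone class theorem gives $\mathcal{H} = $ all bounded $\mathcal{B}([0,T])\otimes\mathcal{F}$-measurable functions.

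The genuine subtlety — and the step I expect to be the main obstacle — is that conditional expectation is only defined up to a $\P$-null set for each fixed $u$, so "the map $(u,\omega)\mapsto \E[X_u\mid\mathcal{G}](\omega)$" is not literally well defined without specifying versions consistently in $u$. The clean way around this is not to fix arbitrary versions but to \emph{construct} a jointly measurable version directly: the monotone class argument above in fact produces, for each $X$ in the generating class, an explicit jointly measurable representative, and one checks that the vector-space and monotone-limit operations preserve the property "there exists a jointly measurable version which is, for each fixed $u$, a version of $\E[X_u\mid\mathcal{G}]$." So the statement to be proved by monotone class is really the existence of such a joint version, and uniqueness is only in the $\P$-a.s., $u$-by-$u$ sense. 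I would state this carefully at the outset (this is essentially the content of the Stricker–Yor result \cite{stricker1978calcul} cited in the paper), and then the rest is the routine monotone-class bookkeeping sketched above. Finally, I would remark that the integrability hypothesis $\E[X_u(\cdot)]<\infty$ for each $u$ is what guarantees $\E[X_u\mid\mathcal{G}]$ is a.s. finite and the truncation-limit passage is legitimate, so no uniform-in-$u$ integrability is needed.
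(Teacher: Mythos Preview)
Your argument is correct and is essentially the standard monotone-class proof of this result (and, in fact, the one given in Stricker and Yor~\cite{stricker1978calcul}). Note, however, that the paper does not actually supply its own proof of Lemma~\ref{lem-SY}: the lemma is stated as a known result imported from~\cite{stricker1978calcul} and then invoked in the proof of Theorem~\ref{meas-thm}. So there is nothing to compare against beyond observing that your sketch reproduces the classical argument, including the key point you flag about constructing a jointly measurable version rather than patching together $u$-by-$u$ versions of the conditional expectation.
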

\begin{theorem}[Stricker-Yor Measurability for Parameter Dependent Integrals]\label{thm-SY}
Let 
\[
  J:\;[0,T]_u\times[0,T]_t\times\Omega\;\to\;\R
\]
be measurable with respect to \(\mathcal{B}([0,T]_u)\otimes\mathcal{P}\) (where \(\mathcal{P}\) is the predictable $\sigma$-algebra in \(t\)) and assume that, for each fixed \(u\), \(J_u(\cdot)\) is integrable with respect to the (non-parameterized) Brownian motion \(B\). Then the map
\[
  (t,u,\omega)\;\longmapsto\;
  \int_0^t J_u(s,\omega)\,dB(s)
\]
is measurable with respect to 
\(\mathcal{B}([0,T]_t)\otimes\mathcal{B}([0,T]_u)\otimes\mathcal{F}\),
and for each fixed \(u\), the process \(t\mapsto\int_0^t J_u(s)\,dB(s)\) coincides almost surely (as usual) with the classical Ito integral.
\end{theorem}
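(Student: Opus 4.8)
The plan is the classical bootstrap from elementary integrands, the only genuinely new ingredient being a device that upgrades ``for each fixed $u$'' statements to joint measurability. First I would treat $J$ of the form $J_u(s,\omega)=\sum_{i=1}^{N}\varphi_i(u)\,\mathbf 1_{(a_i,b_i]}(s)\,\mathbf 1_{A_i}(\omega)$ with $\varphi_i$ Borel on $[0,T]_u$, $0\le a_i<b_i\le T$ and $A_i\in\mathcal F_{a_i}$. For such $J$ one has $\int_0^t J_u(s,\omega)\,dB_s=\sum_i\varphi_i(u)\,\mathbf 1_{A_i}(\omega)\,(B_{t\wedge b_i}-B_{t\wedge a_i})$, an explicit finite sum, hence $\mathcal B([0,T]_t)\otimes\mathcal B([0,T]_u)\otimes\mathcal F$-measurable (continuous in $t$, Borel in $u$), and it coincides with the classical It\^o integral for every $u$. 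These integrands form a multiplicative class generating $\mathcal B([0,T]_u)\otimes\mathcal P$, so they are the right base case.

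Next I would pass to bounded $\mathcal B([0,T]_u)\otimes\mathcal P$-measurable $J$ by a functional monotone class argument over the collection of $J$ admitting a jointly measurable version $\Phi(t,u,\omega)$ with $\Phi(\cdot,u,\cdot)=\int_0^{\cdot}J_u\,dB$ a.s.\ for each $u$; this collection contains the bounded left-continuous adapted integrands (which generate $\mathcal P$ and for which the time-grid approximation converges in $L^2$), and the point is its closure under bounded monotone limits. If $J^{(m)}\uparrow J$ with jointly measurable versions $\Phi^{(m)}$, then for each $u$ the It\^o isometry and Doob's inequality give $a_m(u):=\mathbb E\big[\sup_t|\Phi^{(m)}(\cdot,u)-\int_0^{\cdot}J_u\,dB|^2\big]\le 4\,\mathbb E\!\int_0^T|J^{(m)}_u-J_u|^2\,ds\to 0$. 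The crucial observation is that $u\mapsto a_m(u)$ is Borel (Tonelli), so $m_j(u):=\min\{m:a_m(u)\le 2^{-j}\}$ is Borel in $u$ and $\Psi^j(t,u,\omega):=\Phi^{(m_j(u))}(t,u,\omega)=\sum_m\mathbf 1_{\{m_j(u)=m\}}\,\Phi^{(m)}(t,u,\omega)$ is jointly measurable; since $\sum_j\mathbb P\big(\sup_t|\Psi^j(\cdot,u)-\int_0^{\cdot}J_u\,dB|>j^{-1}\big)\le\sum_j 4j^2 2^{-j}<\infty$, Borel--Cantelli yields a.s.\ (uniform in $t$) convergence for each fixed $u$, whence $\Phi:=\limsup_j\Psi^j$ is the required version. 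This measurable-subsequence trick replaces every ``extract an a.s.\ convergent subsequence'' throughout the argument, and is also where a parametrized statement such as Lemma~\ref{lem-SY} would enter if one projected in $L^2$ instead.

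For general $J$ satisfying only the standing hypothesis $\int_0^T|J_u(s)|^2\,ds<\infty$ a.s.\ for each $u$, I would localize: set $\tau^n_u:=\inf\{t\le T:\int_0^t|J_u(s)|^2\,ds>n\}$. Since $(t,u,\omega)\mapsto\int_0^t|J_u(s,\omega)|^2\,ds$ is jointly measurable (an ordinary integral; Tonelli), each $\tau^n_u$ is a stopping time depending jointly measurably on $(u,\omega)$, and $J^n_u(s):=J_u(s)\,\mathbf 1_{[0,\tau^n_u)}(s)$ is $\mathcal B([0,T]_u)\otimes\mathcal P$-measurable with $\mathbb E\!\int_0^T|J^n_u|^2\,ds\le n$ (an intermediate truncation $J\mapsto J\mathbf 1_{\{|J|\le M\}}$ handles the $L^2$-bounded case feeding into the monotone class step). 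Step two provides jointly measurable versions $\Phi^n$; for each $u$ one has $\tau^n_u\uparrow T$ a.s.\ and $\Phi^n(\cdot,u)=\int_0^{\cdot}J_u\,dB$ on $\{\tau^n_u=T\}$, so $\Phi:=\limsup_n\Phi^n$ is jointly measurable and coincides a.s.\ with the classical It\^o integral for every $u$, which is the assertion.

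\textbf{Main obstacle.} The analytic content is light; the one real subtlety is \emph{simultaneity}: for each $u$ the It\^o integral $\int_0^{\cdot}J_u\,dB$ is defined only up to a $u$-dependent null set, so merely gluing the individual definitions produces nothing jointly measurable. This is precisely the difficulty resolved by Stricker and Yor \cite{stricker1978calcul}, and the fix used above --- arranging the approximation error, hence the accelerating subsequence, to depend \emph{measurably} on $u$, then invoking a single Borel--Cantelli --- is the heart of the proof; the elementary case, the monotone-class reduction, and the localization are otherwise routine.
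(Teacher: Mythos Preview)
The paper does not actually prove this theorem: it is stated as a known result attributed to Stricker and Yor \cite{stricker1978calcul} and then invoked as a black box in the proof of Theorem~\ref{meas-thm}. So there is no ``paper's own proof'' to compare against.

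That said, your proposal is a faithful and correct reconstruction of the Stricker--Yor argument. The three-stage structure (elementary integrands $\to$ bounded via monotone class $\to$ general via localization) is standard, and you have correctly identified the one genuine difficulty: the It\^o integral for each fixed $u$ is only defined modulo a $u$-dependent null set, so naive gluing fails. Your measurable-subsequence device --- making the accelerating index $m_j(u)$ Borel in $u$ by controlling the Borel function $u\mapsto a_m(u)$, then taking $\limsup$ of the jointly measurable $\Psi^j$ --- is exactly the mechanism Stricker and Yor use to resolve this, and your write-up of it is clean. The localization step is also handled properly: the joint measurability of $(t,u,\omega)\mapsto\int_0^t|J_u(s,\omega)|^2\,ds$ via Tonelli is what makes $\tau^n_u$ and hence $J^n_u$ jointly measurable in the required sense. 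In short, you have supplied the proof the paper omits.
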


Our goal now is to establish the joint measurability of the solution \((Y_u,Z_u)\) of a BSVIE in the product space \([0,T]^2 \times \Omega\).

\begin{theorem}[Measurability of the BSVIE Solution]\label{meas-thm}
Assume that:
\begin{enumerate}
    \item \(\phi: [0,T] \times \Omega \to \mathbb{R}\) is \( \mathcal{B}([0,T]) \otimes \mathcal{P} \)-measurable;
    \item \(f: [0,T] \times [0,T] \times \Omega \times \mathbb{R} \times \mathbb{R} \to \mathbb{R}\) is \( \mathcal{B}([0,T]^2) \otimes \mathcal{P} \)-measurable in the first two arguments and satisfies Lipschitz and linear growth conditions in the last two arguments.
\end{enumerate}
Then the solution \((Y_u, Z_u)\) to the BSVIE
\[
Y_u(t) = \phi_u + \int_t^T f_u(s, Y_u(s), Z_u(s)) \, ds - \int_t^T Z_u(s) \, dB(s), \quad \forall\, t, u \in [0,T],
\]
is \( \mathcal{B}([0,T]^2) \otimes \mathcal{F} \)-measurable.
\end{theorem}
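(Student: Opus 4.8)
The plan is to realise the solution as the fixed point of a Picard map acting on a Banach space of \emph{jointly measurable} processes, and to use Lemma~\ref{lem-SY} and Theorem~\ref{thm-SY} to check that this map preserves joint measurability. Concretely, let $\mathbb{H}_\beta$ denote the space of pairs $(y,z)$ such that $(u,t,\omega)\mapsto y_u(t,\omega)$ and $(u,t,\omega)\mapsto z_u(t,\omega)$ are both $\mathcal{B}([0,T]_u)\otimes\mathcal{P}$-measurable and
\[
\|(y,z)\|_\beta^2 \;:=\; \int_0^T \mathbb{E}\!\left[\int_0^T e^{\beta t}\big(|y_u(t)|^2+|z_u(t)|^2\big)\,dt\right]du \;<\;\infty .
\]
Since joint measurability is stable under $L^2$-limits (pass to an a.e.-convergent subsequence), $\mathbb{H}_\beta$ is a closed subspace of the ambient $L^2$ space, hence complete. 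Note that $\mathcal{B}([0,T]_u)\otimes\mathcal{P}$-measurability already implies the $\mathcal{B}([0,T]^2)\otimes\mathcal{F}$-measurability claimed in the theorem, so it suffices to produce a fixed point in $\mathbb{H}_\beta$; this even gives a slightly stronger conclusion.

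Next I would define the solution map $\Phi$. Given $(y,z)\in\mathbb{H}_\beta$, set $\xi_u := \phi_u + \int_0^T f_u(s,y_u(s),z_u(s))\,ds$; by joint measurability of $f$ and of $(y,z)$ together with Fubini, $(u,\omega)\mapsto\xi_u(\omega)$ is $\mathcal{B}([0,T]_u)\otimes\mathcal{F}$-measurable and square-integrable for a.e.\ $u$ (using the linear growth of $f$). Let $N_u(t):=\mathbb{E}[\xi_u\mid\mathcal{F}_t]$ in its c\`adl\`ag version, with Brownian martingale representation $N_u(t)=\mathbb{E}[\xi_u]+\int_0^t \zeta_u(s)\,dB_s$, and put $\Phi(y,z)_u:=(Y_u,\zeta_u)$ with $Y_u(t):=N_u(t)-\int_0^t f_u(s,y_u(s),z_u(s))\,ds$, which indeed satisfies $Y_u(t)=\phi_u+\int_t^T f_u(s,y_u(s),z_u(s))\,ds-\int_t^T\zeta_u(s)\,dB_s$. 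That $\Phi$ is a contraction on $\mathbb{H}_\beta$ for $\beta$ large enough (depending only on the Lipschitz constant $L$, hence uniformly in $u$) is the standard BSDE a priori estimate applied for each fixed $u$ and then integrated in $u$; Banach's theorem yields a unique fixed point $(Y,Z)\in\mathbb{H}_\beta$, and for each fixed $u$ the uniqueness of BSDE solutions identifies $(Y_u,Z_u)$ with the process in the statement.

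The crux — and the only step that is not routine — is showing that $\Phi$ maps $\mathbb{H}_\beta$ into itself, i.e.\ that $\zeta$ and $Y$ inherit joint measurability. For $Y$ this reduces to joint measurability of $(t,u,\omega)\mapsto\int_0^t f_u(s,y_u(s),z_u(s))\,ds$ (Fubini again) and of $(t,u,\omega)\mapsto\int_0^t\zeta_u(s)\,dB_s$, the latter being exactly Theorem~\ref{thm-SY} once $\zeta$ is known to be $\mathcal{B}([0,T]_u)\otimes\mathcal{P}$-measurable. So everything hinges on the joint measurability of the representation integrand $\zeta_u$, which the abstract martingale representation theorem does \emph{not} supply. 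Here I would follow the Stricker--Yor construction underlying Theorem~\ref{thm-SY}: approximate $N_u$ along a refining sequence of deterministic partitions $0=t_0<\dots<t_n=T$ by the elementary predictable integrands built from the increments $\mathbb{E}[\xi_u\mid\mathcal{F}_{t_{i+1}}]-\mathbb{E}[\xi_u\mid\mathcal{F}_{t_i}]$; each such conditional expectation is jointly measurable in $(u,\omega)$ by Lemma~\ref{lem-SY} (applied to $\xi_u^+$ and $\xi_u^-$ separately, with $\mathcal{G}=\mathcal{F}_{t_i}$), so the elementary integrands are $\mathcal{B}([0,T]_u)\otimes\mathcal{P}$-measurable; they converge to $\zeta$ in $L^2(du\times ds\times d\mathbb{P})$ because $\mathbb{E}\int_0^T|\zeta_u|^2\,ds=\mathrm{Var}(\xi_u)$ is integrable in $u$; passing to an a.e.-convergent subsequence shows $\zeta$ is jointly measurable. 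The constant term $\mathbb{E}[\xi_u]=\mathbb{E}[\xi_u\mid\mathcal{F}_0]$ is measurable in $u$ by Lemma~\ref{lem-SY} with trivial $\mathcal{G}$. This closes the argument; the remaining points (square-integrability of $\xi_u$, the contraction bound, the $u$-uniform choice of $\beta$, and the continuous modification of $t\mapsto Y_u(t)$) are the familiar BSDE computations and require no new ideas.
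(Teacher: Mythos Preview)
Your proposal is correct and follows essentially the same route as the paper: a Picard/fixed-point construction in which each step preserves joint measurability via Lemma~\ref{lem-SY} (for the conditional expectations) and Theorem~\ref{thm-SY} (for the parametrized stochastic integral), followed by passage to the limit. The only cosmetic difference is that you package the iteration as a contraction on a Banach space of jointly measurable processes, and you are more explicit than the paper about why the martingale-representation integrand $\zeta_u$ is itself jointly measurable (the paper simply asserts this in Step~2), spelling out the elementary-integrand approximation that underlies the Stricker--Yor result.
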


\begin{proof}
We construct a Picard iteration scheme.

\paragraph{Step 1 (Inductive Construction)} Define the initial guess:
\[
Y^0_u(t) := 0, \quad Z^0_u(t) := 0.
\]
Given \((Y^n_u, Z^n_u)\), define the next iterate by:
\[
Y^{n+1}_u(t) := \phi_u + \int_t^T f_u(s, Y^n_u(s), Z^n_u(s))\, ds - \int_t^T Z^{n+1}_u(s)\, dB(s).
\]

We show by induction that \((Y^n_u, Z^n_u)\) is \( \mathcal{B}([0,T]^2) \otimes \mathcal{F} \)-measurable for each \(n\). The base case is trivial. For the inductive step, note that if \((Y^n_u, Z^n_u)\) is measurable, then \(f_u(s,Y^n_u,Z^n_u)\) is also measurable due to the assumptions on \(f\).

Define:
\[
X_u := \phi_u + \int_0^T f_u(s,Y^n_u(s),Z^n_u(s))\, ds.
\]
Then \( X_u \) is \( \mathcal{B}([0,T]) \otimes \mathcal{F} \)-measurable and square integrable. Define the adapted martingale:
\[
M_u(t) := \mathbb{E}[X_u \mid \mathcal{F}_t].
\]
By Lemma \eqref{lem-SY}, \( M_u(t) \) is also \( \mathcal{B}([0,T]) \otimes \mathcal{F} \)-measurable.

\paragraph{Step 2 (Stochastic Representation and Measurability)}  
Fix \(u \in [0,T]\). Since \(M_u(\cdot)\) is a square integrable martingale, the martingale representation theorem yields:
\[
M_u(t) = M_u(0) + \int_0^t Z^{n+1}_u(s)\, dB(s),
\]
where \(Z^{n+1}_u(\cdot)\) is progressively measurable. Moreover, the mapping \((s,u,\omega) \mapsto Z^{n+1}_u(s,\omega)\) is jointly measurable.

Then by Theorem \eqref{thm-SY}), the map
\[
(t,u,\omega) \mapsto \int_0^t Z^{n+1}_u(s)\, dB(s)
\]
is \( \mathcal{B}([0,T]^2) \otimes \mathcal{F} \)-measurable. Hence, the updated \(Y^{n+1}_u(t)\) is also measurable.

\paragraph{Step 3 (Limit Argument)}  
The sequence \((Y^n_u, Z^n_u)\) converges in \(L^2([0,T]^2 \times \Omega)\) to the solution \((Y_u,Z_u)\) of the BSVIE. As the pointwise limit of measurable functions, \((Y_u,Z_u)\) is also \( \mathcal{B}([0,T]^2) \otimes \mathcal{F} \)-measurable.
\end{proof}

\begin{remark}
This result ensures that BSVIE solutions are measurable over the product space \([0,T]^2 \times \Omega\), which is critical for numerical schemes and stability analysis.
\end{remark}

\subsection{BSVIEs on the Product Space}

We now reformulate the BSVIE framework on an extended probability space:
\begin{itemize}
    \item \( \tilde{\Omega} := \Omega \times [0,T] \),
    \item \( \tilde{\mathcal{F}} := \mathcal{F} \otimes \mathcal{B}([0,T]) \),
    \item \( \tilde{\mathbb{P}} := \mathbb{P} \otimes \eta \), where \( \eta \) is typically the Lebesgue measure on \( [0,T] \).
\end{itemize}

Expectations over this product space are written as:
\[
\tilde{\mathbb{E}}[Y_u(t)] = \int_\Omega \int_0^T Y_u(t)(\omega)\, \eta(du)\, \mathbb{P}(d\omega).
\]

\paragraph{Function spaces.}
\begin{itemize}
    \item \( L^2_{\mathcal{F}}([0,T]^2) \): the space of jointly measurable processes \( Y_u : [0,T]^2 \times \Omega \to \mathbb{R} \), adapted in the \( t \)-variable to \( \mathbb{F} \), with
    \[
    \|Y\|^2 := \mathbb{E} \left[ \int_0^T \int_0^T |Y_u(t)|^2 \, du \, dt \right] < \infty.
    \]

    \item \( L^2_{\mathcal{F}_T}([0,T]) \): the space of random fields \( \phi_u : [0,T] \times \Omega \to \mathbb{R} \) that are \( \mathcal{F}_T \otimes \mathcal{B}([0,T]) \)-measurable, with
    \[
    \|\phi\|^2 := \mathbb{E} \left[ \int_0^T |\phi_u|^2 \, du \right] < \infty.
    \]

    \item \( L^2_{\mathcal{F}}([0,T]^2) \): similarly for \( Z_u(s) \), with measurability in \( (s,u) \) and \( \mathcal{F}_s \)-adaptedness in \( s \).
\end{itemize}

\paragraph{Existence and Uniqueness via Parametrized BSDEs.}

Assume that the generator \( f_u: \Omega \times [0,T]^2 \times \mathbb{R}^2 \to \mathbb{R} \) satisfies:
\begin{itemize}
    \item measurability in \( (s,u,\omega) \), and \( \mathcal{F}_s \)-adaptedness in \( s \) for each \( u \),
    \item Lipschitz continuity and linear growth in \( (y,z) \), uniformly in \( (s,u) \).
\end{itemize}

The following result ensures that the BSVIE under our assumptions has a unique solution. This foundational result guarantees that our learning-based numerical scheme targets a well defined and meaningful object.

\begin{theorem}[Existence and Uniqueness in Product Space]
Let \( \phi_u \in L^2_{\mathcal{F}_T}([0,T]) \). Then, there exists a unique solution \( (Y_u, Z_u) \in L^2_{\mathcal{F}}([0,T]^2) \times L^2_{\mathcal{F}}([0,T]^2) \) to the BSVIE:
\[
Y_u(t) = \phi_u + \int_t^T f_u(s,Y_u(s),Z_u(s))\, ds - \int_t^T Z_u(s)\, dB(s).
\]
\end{theorem}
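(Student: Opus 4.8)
The plan is to reduce the problem, parameter by parameter, to classical BSDE theory. For each \emph{fixed} $u \in [0,T]$ the displayed equation is an ordinary BSDE: the unknown enters the generator only through $(Y_u(s),Z_u(s))$ at the \emph{same} value of $u$, so the family is decoupled across $u$ and no genuine Volterra coupling is present. One can therefore solve the equation $u$-wise and then assemble a jointly measurable version of the family; equivalently, one may run a single Banach fixed-point argument in $L^2_{\mathcal{F}}([0,T]^2)\times L^2_{\mathcal{F}}([0,T]^2)$ with a $\beta$-weighted norm, the extra $\int_0^T(\cdot)\,du$ being inert. I will follow the first route.

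First, fix $u$. Since $(s,\omega,y,z)\mapsto f_u(s,\omega,y,z)$ is progressively measurable, uniformly Lipschitz in $(y,z)$ with linear growth, and $\phi_u \in L^2(\Omega,\mathcal{F}_T,\mathbb{P})$, classical BSDE well-posedness (Pardoux--Peng; cf.\ \cite{yong2008well}) gives a unique pair $(Y^u,Z^u)\in\mathcal{S}^2\times\mathcal{H}^2$ solving the equation, together with the standard a priori bound
\[
\mathbb{E}\Big[\sup_{t\in[0,T]}|Y^u(t)|^2\Big] + \mathbb{E}\int_0^T|Z^u(s)|^2\,ds \;\le\; C\Big(\mathbb{E}|\phi_u|^2 + \mathbb{E}\int_0^T|f_u(s,0,0)|^2\,ds\Big),
\]
with $C=C(L,T)$ independent of $u$. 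Integrating this inequality over $u\in[0,T]$ against Lebesgue measure yields
\[
\mathbb{E}\int_0^T\!\!\int_0^T|Y^u(t)|^2\,dt\,du + \mathbb{E}\int_0^T\!\!\int_0^T|Z^u(s)|^2\,ds\,du \;\le\; C\Big(\|\phi\|^2 + \mathbb{E}\int_0^T\!\!\int_0^T|f_u(s,0,0)|^2\,ds\,du\Big),
\]
whose right-hand side is finite: the first term by the standing assumption $\phi\in L^2_{\mathcal{F}_T}([0,T])$, the second by linear growth of $f_u(\cdot,0,0)$ (in the concrete setting of \eqref{eq:BSVIE} one has $|f_u(s,0,0)|\le L(1+|X(s)|)$ with $X\in\mathcal{S}^2$). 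Hence, once a jointly measurable representative is secured, $(Y_u,Z_u)$ lies in the stated product spaces.

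Joint measurability is the one point that goes beyond classical BSDE theory, and it is exactly what Theorem~\ref{meas-thm} delivers. The Picard iteration $(Y^{n}_u,Z^{n}_u)$ started from $Y^0\equiv Z^0\equiv 0$ is, by induction, $\mathcal{B}([0,T]^2)\otimes\mathcal{F}$-measurable at each step: freezing $(Y^n,Z^n)$ leaves a measurable integrand, Lemma~\ref{lem-SY} preserves measurability of the conditional-expectation martingale, and Theorem~\ref{thm-SY} preserves it through the stochastic integral, while the $\mathbb{F}$-adaptedness in the running variable is inherited at every stage. The iterates converge in $L^2([0,T]^2\times\Omega)$ and the limit is therefore jointly measurable; for each fixed $u$ this limit solves the BSDE above, hence coincides $dt\otimes d\mathbb{P}$-a.e.\ (resp.\ $ds\otimes d\mathbb{P}$-a.e.) with $(Y^u,Z^u)$. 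Combined with the previous step, the limit is the required solution in $L^2_{\mathcal{F}}([0,T]^2)\times L^2_{\mathcal{F}}([0,T]^2)$.

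For uniqueness, if $(Y^{(1)},Z^{(1)})$ and $(Y^{(2)},Z^{(2)})$ are two product-space solutions, then for a.e.\ $u$ each restricts to a solution of the same BSDE, so by $u$-wise uniqueness they agree for a.e.\ $u$, hence as elements of $L^2_{\mathcal{F}}([0,T]^2)$. The only non-routine obstacle is the measurability step: turning the family of $u$-wise classical solutions into a single $\mathcal{B}([0,T]^2)\otimes\mathcal{F}$-measurable object while retaining adaptedness in $t$ and $s$ — and this is precisely the content of Theorem~\ref{meas-thm}. Everything else is the standard Pardoux--Peng machinery carried uniformly in the parameter $u$.
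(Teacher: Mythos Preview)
Your proof is correct and follows essentially the same approach as the paper: fix $u$, apply Pardoux--Peng to the resulting classical BSDE, and invoke Theorem~\ref{meas-thm} for the joint measurability of the parametrized family. The paper's own argument is in fact only a two-line sketch of exactly this strategy, so your version is a faithful (and more detailed) expansion of it.
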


\begin{proof}[Proof Sketch]
Fix \( u \in [0,T] \). The equation in \( t \mapsto Y_u(t) \) defines a classical BSDE. Standard results (Pardoux-Peng \cite{PP}) yield existence and uniqueness. Measurability of the map \( (t,u,\omega) \mapsto Y_u(t)(\omega) \) follows from Theorem \ref{meas-thm}.
\end{proof}

\begin{remark}
This reformulation enables numerical approximation via parallel resolution of BSDEs indexed by \( u \), which is especially suitable for machine learning-based schemes such as DeepBSVIE.
\end{remark}

\begin{remark}[Multidimensional Extension]
The measurability result established in Theorem~\ref{meas-thm} can be extended to the multidimensional case, where the solution processes take values in \( \mathbb{R}^d \) and the Brownian motion is \( \mathbb{R}^m \)-valued. In this case, the generator \( f \) and the terminal condition \( \phi \) are vector-valued, and the stochastic integral is defined componentwise. The same Picard iteration and measurability arguments apply by treating each coordinate separately and using the Stricker-Yor theorem componentwise. 
\end{remark}

\section{Numerical Solutions for BSVIEs}
\label{sec:scheme}
A central motivation for our numerical scheme stems from the PDE based representation of adapted solutions to BSVIEs, as presented in \eqref{eq:FK-representation}. In this section, we develop a neural network-based solver for BSVIEs. Our approach extends the deep BSDE methodology to the Volterra setting, capturing the bi-temporal and path dependent structure inherent in such equations.

\subsection{The DeepBSVIE Algorithm}
\label{sec:algo}
We consider a uniform time discretization of the interval $[0,T]$ with $N$ points:
\[
\Delta t := \frac{T}{N}, \quad t_i := i \Delta t, \quad i = 0, \ldots, N.
\]
For each discrete time index \( i \), we consider the set of future indices \( j \in \{i+1, \ldots, N\} \), over which the second time variable in $Z(t_i, t_j)$ evolves.

At each time step \( t_i \), we train two separate neural networks to approximate the solution components:
\begin{itemize}
    \item \( Y(t_i) \approx \mathcal{Y}^i(t_i, X_i) \),
    \item \( Z(t_i, t_j) \approx \mathcal{Z}^i(t_i, t_j, X_i, X_j) \), for all \( j \in \{i+1, \ldots, N\} \).
\end{itemize}

Each network is implemented as a fully connected feedforward neural network with fixed depth and width.

\vspace{0.5em}
\paragraph{Architecture of $\mathcal{Y}^i$.} The network $\mathcal{Y}^i: \mathbb{R}^{1+n} \to \mathbb{R}^m$ is defined as
\[
\mathcal{Y}^i(t_i, X_i) = W_L^{(Y)} \, \varphi \left( \cdots \varphi \left( W_1^{(Y)} 
\begin{bmatrix}
t_i \\
X_i
\end{bmatrix}
+ b_1^{(Y)} \right) \cdots \right) + b_L^{(Y)},
\]
where:
\begin{itemize}
    \item $W_\ell^{(Y)}$, $b_\ell^{(Y)}$ are trainable weights and biases,
    \item $\varphi$ is the activation function,
    \item the input vector is $(t_i, X_i) \in \bR^{1+n}$, \item the output vector approximates \( Y(t_i)\in \bR^m \).
\end{itemize}

\paragraph{Architecture of $\mathcal{Z}^i$.} The network $\mathcal{Z}^i: \mathbb{R}^{2+2n} \to \mathbb{R}^{m\times d}$ is defined as
\[
\mathcal{Z}^i(t_i, t_j, X_i, X_j) = W_L^{(Z)} \, \varphi \left( \cdots \varphi \left( W_1^{(Z)} 
\begin{bmatrix}
t_i \\
t_j \\
X_i \\
X_j
\end{bmatrix}
+ b_1^{(Z)} \right) \cdots \right) + b_L^{(Z)},
\]
where the input dimension is $2 + 2n$ corresponding to $(t_i,t_j,X_i,X_j)$ and output is a matrix in $\mathbb{R}^{m\times d}$ approximating \( Z(t_i, t_j) \).

\vspace{0.5em}
\paragraph{Forward Simulation.} To generate training data, we simulate $M$ independent sample paths of the forward process $\{X_i^k\}_{i=0}^N$ using the Euler-Maruyama scheme:
\[
X_{i+1}^k = X_i^k + b(t_i, X_i^k) \Delta t + \sigma( t_i, X_i^k) \Delta B_{i+1}^k, \quad X_0^k = x,
\]
where \(
\Delta B_{i+1}^k \in \mathbb{R}^d \)  is the Brownian increment over \([t_i, t_{i+1}],\)  sampled as \(\mathcal{N}(0, \Delta t \cdot I_d)\).

\vspace{0.5em}
\paragraph{Discrete BSVIE Approximation.} For each path $k = 1, \ldots, M$ and time $t_i$, we estimate:
\[
\widehat{Y}_i^k := \mathcal{Y}^i(t_i, X_i^k ; \xi), \quad 
\widehat{Z}_i^k(t_i, t_j) := \mathcal{Z}^i(t_i, t_k, X_i^k, X_j^k; \eta),
\]
where \(\xi\) denotes the collection of trainable parameters (weights and biases) of the \(\mathcal{Y}^i\) networks, and \(\eta\) denotes the collection of trainable parameters of the \(\mathcal{Z}^i\) networks.\\

We define the discrete BSVIE residual:
\[
\mathcal{G}_i^k := g(t_i, X_i^k, X_N^k) +
\sum_{j=i}^{N-1} f(t_i, t_j, X_i^k, X_j^k, \widehat{Y}_j^k, \widehat{Z}_i^k(t_i, t_j)) \Delta t -
\sum_{j=i}^{N-1} \widehat{Z}_i^k(t_i, t_j) \Delta B_j^k.
\]
\paragraph{Loss Function.} The sample wise loss is:
\begin{equation}
\ell_i^k := \| \widehat{Y}_i^k - \mathcal{G}_i^k \|_2,
\quad
\ell_i := \frac{1}{M} \sum_{k=1}^M \ell_i^k
\label{loss}
\end{equation}

\paragraph{Summary of the Algorithm.} The full backward training and simulation scheme is summarized below.

\begin{algorithm}[H]
\caption{DeepBSVIE: Deep Learning Solver for BSVIEs}
\label{algo:BSVIE}
\begin{algorithmic}[1]
\For{$i = N$ to $0$}
    \For{each training epoch}
        \For{$k = 1$ to $M$}
            \State Initialize $X_0^k = x$
            \For{$\ell = 0$ to $N$}
                \State Sample $\Delta B_{\ell+1}^k$
                \State $X_{\ell+1}^k = X_\ell^k + b( t_\ell, X_\ell^k) \Delta t + \sigma( t_\ell, X_\ell^k) \Delta B_{\ell+1}^k$
            \EndFor
            \State $\widehat{Y}_i^k = \mathcal{Y}^i(t_i, X_i^k; \xi_i)$
            \For{$j = i+1$ to $N$}
                \State $\widehat{Z}_i^k(t_i, t_j) = \mathcal{Z}^i(t_i, t_j, X_i^k, X_j^k; \eta_i)$
            \EndFor
            \State $\mathcal{G}_i^k = g(t_i, X_i^k, X_N^k) + \sum_j f \Delta t - \sum_j \widehat{Z}_i^k \Delta B_j^k$
            \State $\ell_i^k = |\widehat{Y}_i^k - \mathcal{G}_i^k|^2$
        \EndFor
        \State Compute $\ell_i = \frac{1}{M} \sum_k \ell_i^k$
        \State Update $\xi_i$, $\eta_i$ 
    \EndFor
\EndFor
\State \Return $\{ \widehat{Y}_i^k, \widehat{Z}_i^k(t_i, \cdot) \}_{i}$
\end{algorithmic}
\end{algorithm}

\vspace{0.5em}
\paragraph{Error Metrics.} 
When reference solutions are available, we compute the empirical $L^2$-errors:

\begin{equation}
\begin{aligned}
\mathcal{E}(Y) &:= \frac{1}{M} \sum_{k=1}^M \sum_{i=0}^{N} \left| Y_{t_i}^{k} - \widehat{Y}_i^{k} \right|^2\Delta t,   \\
\mathcal{E}(Z) &:= \frac{1}{M}   \sum_{k=1}^M \sum_{i=0}^{N} \sum_{j=i}^{N} \| Z_{t_i,t_j}^{k} - \widehat{Z}_{i,j}^{k} \|\, (\Delta t)^2.
\end{aligned}
\label{eq:err}
\end{equation}
The corresponding relative $L^2$-errors:

\begin{equation}
\begin{aligned}
\mathcal{E}_R(Y) &:= \frac{ \sum_{k=1}^M \sum_{i=0}^{N}  | Y_{t_i}^{k} - \widehat{Y}_i^{k} |^2}
{ \sum_{k=1}^M \sum_{i=0}^{N}  | Y_{t_i}^{k} |^2 }, \\
\mathcal{E}_R(Z) &:= \frac{  \sum_{k=1}^M \sum_{i=0}^{N} \sum_{j=i}^{N} \| Z_{t_i,t_j}^{k} - \widehat{Z}_{i,j}^{k} \|^2 }
{ \sum_{k=1}^M \sum_{i=0}^{N} \sum_{j=i}^{N} \| Z_{t_i,t_j}^{k} \|^2},
\end{aligned}
\label{eq:rel_err}
\end{equation}
where, for each sample path \(k\):

\begin{itemize}
    \item \(Y_{t_i}^k\) and \(Z_{t_i, t_j}^k\) denote the reference (ground truth) values of the solution components evaluated at discrete times \(t_i\) and \((t_i, t_j)\), respectively.
    \item \(\widehat{Y}_i^k\) and \(\widehat{Z}_{i,j}^k := \widehat{Z}^k_{i}(t_i,t_j)\) denote the corresponding neural network approximations.
\end{itemize}
\paragraph{Implementation.}The solver is implemented in Python using \texttt{PyTorch}. Across all the examples presented in this paper, we use 
$N = 50$ time steps with $T = 1$ and generate batch samples of size 
$M = 2^{12}$ at each training iteration. The algorithm proceeds backward 
from $i = N$ to $i = 0$, training one pair of neural networks 
$(Y_i, Z_i)$ per time step. Both networks have $L = 3$ hidden layers with 
$\tanh$ activation: the $Y$-network uses $h_Y =40$ neurons per layer, while the 
$Z$-network uses $h_Z = 80$ neurons per layer. \\
At each time step $i$, we initialize the networks using a warm-start from 
the converged parameters at $i + 1$ (except at the terminal step $i = N$), 
and optimize using the \texttt{AdamW} optimizer. The learning rate is initially set to $10^{-2}$. It decays by a factor of $0.995$ at each time step, and additionally, it is adjusted adaptively within each epoch based on the optimizer's schedule. The terminal condition at $i = N$ 
is trained for up to $1000$ iterations, while backward steps $i < N$ use up 
to $500$ iterations each. 
The vectorized computation of $\hat{Z}_i(t_i, t_j)$ for all 
$j \in \{i, i+1, \ldots, N-1\}$ is performed in a single forward pass, 
avoiding nested loops over and maintaining constant computational time per epoch, regardless of the number of future timesteps. 
Complete implementation details are available at
\url{https://github.com/giuliapucci98/DeepBSVIE.git}.

\paragraph{Comparison with existing methods.} 
To the best of our knowledge, \cite{andersson2025deep} is the only other work that develops a deep learning solver for BSVIEs. Their approach employs a forward in time scheme that solves all timesteps simultaneously using a single global neural network. In contrast, our method respects the inherent backward nature of BSVIEs by starting at the terminal condition $t=T$ and proceeding backward in time. This backward localized approach has several advantages, for example (i) it treats future values as ground truth at each step, training one neural network per timestep using already converged solutions from subsequent times, thereby avoiding the circular dependencies present in simultaneous optimization across all timesteps; provided that the approximation is performed correctly at each step, this allows the method to work on longer time intervals and reduces dependence on the length of the time horizon; (ii) it is more memory efficient, as only the current timestep's network needs to be stored during training, allowing our method to scale to arbitrarily large values of $N$ without memory constraints. Despite these architectural differences, both methods achieve high accuracy and similar computation time, demonstrating that the backward localized framework provides a viable alternative perspective for solving BSVIEs with deep learning.\\
Besides these deep learning approaches, it is worth mentioning that alternative solution approaches like classical finite difference and Monte Carlo methods for BSVIEs, while theoretically applicable, would suffer from significant computational limitations, especially in higher dimensions.

\section{Convergence Analysis of the Deep Neural Scheme for BSVIEs }
\label{sec:convergence}
In this section, we analyze the convergence of a neural network-based solver for BSVIEs, using a discrete time scheme inspired by the Euler-Maruyama method and extending the deep BSDE framework in \cite{hure2020deep} to the more intricate two time BSVIE structure. To approximate the continuous time solution of the BSVIE, we adopt the discrete time scheme proposed by \cite{hamaguchi2023approximations}. Our main theoretical result establishes that the outputs of our neural network algorithm converge to the solution of this discrete time approximation as the network capacity and training accuracy increase.

\subsection{Discretization Scheme}
In this subsection, we summarize the main assumptions and the discrete time scheme for BSVIEs as introduced in \cite{hamaguchi2023approximations}.
We consider the following assumptions on the model coefficients:

\begin{assumption}
\label{ass:scheme_bsig}
The maps 
\[
b : [0,T] \times \mathbb{R}^n \to \mathbb{R}^n \quad \text{and} \quad \sigma : [0,T] \times \mathbb{R}^n \to \mathbb{R}^{n \times d}
\]
are measurable. Moreover,
\begin{enumerate}[label=(\alph*)] 
\item there exists a constant \(L > 0\) such that for any \(s \in [0,T]\) and \(x, x' \in \mathbb{R}^n\),
\[
|b(s,0)| + \|\sigma(s,0)\| \leq L,
\]
and
\[
|b(s,x) - b(s,x')| + \|\sigma(s,x) - \sigma(s,x')\| \leq L |x - x'|,
\]
\item for any \(s, s' \in [0,T]\) and \(x \in \mathbb{R}^n\), it holds that
\[
|b(s, x) - b(s', x)| + \|\sigma(s, x) - \sigma(s', x)\| \leq L |s - s'|^{\frac{1}{2}} \big(1 + |x|\big).
\]
\end{enumerate}
\end{assumption}

\begin{assumption}
\label{ass:scheme_gf}
The maps 
\[
g : [0, T] \times \mathbb{R}^n \times \mathbb{R}^n \to \mathbb{R}^m
\quad \text{and} \quad
f: \Delta_T \times \mathbb{R}^n  \times \mathbb{R}^n  \times \mathbb{R}^m \times \mathbb{R}^{m \times d} \to \mathbb{R}^m,
\]

are measurable. Moreover, 
\begin{enumerate}[label=(\alph*)] 
\item there exists a constant \(L > 0\) such that
\[
\int_0^T |g(t,0,0)|^2 dt + \int_0^T \int_t^T |f(t,s,0,0,0,0)|^2 ds dt \leq L,
\]
\item for any \((t,s) \in \Delta_T\) and
\[
(x_1,x_2,y,z_1), (x_1', x_2', y', z_1') \in \mathbb{R}^n \times \mathbb{R}^n \times \mathbb{R}^m \times \mathbb{R}^{m \times d},
\]
the following Lipschitz condition holds:
\begin{equation*}
    \begin{aligned} 
|g(t,x_1,x_2) - g(t,x_1', x_2')| + |f(t,s,x_1,x_2,y,z_1) - f(t,s,x_1', x_2', y', z_1')| \\ \leq L \big( |x_1 - x_1'| + |x_2 - x_2'| + |y - y'| + \|z_1 - z_1'\| \big),
    \end{aligned}
\end{equation*}
\item for any \((t,s), (t', s') \in \Delta_T\) and \((x_1, x_2, y, z_1) \in \mathbb{R}^n \times \mathbb{R}^n \times \mathbb{R}^m \times \mathbb{R}^{m \times d}\), it holds that
\[
\begin{aligned}
& |g(t,x_1,x_2) - g(t', x_1, x_2)| + |f(t,s,x_1,x_2,y,z_1) - f(t', s', x_1, x_2, y, z_1)| \\
& \quad \leq L \left( |t - t'|^{\frac{1}{2}} + |s - s'|^{\frac{1}{2}} \right) \left( 1 + |x_1| + |x_2| + |y| + \|z_1\| \right).
\end{aligned}
\]
\end{enumerate}
\end{assumption}

\begin{remark}
Assumptions  \ref{ass:scheme_bsig} and \ref{ass:scheme_gf} ensure the well-posedness of the forward diffusion as well as the stability of the backward Volterra equation under time discretization. Compared to Assumption~\ref{ass}, only required for the existence of a continuous-time BSVIE solution, these assumptions strengthen the regularity of the coefficients to ensure the convergence of the discretization scheme proposed in \cite{hamaguchi2023approximations}. Specifically, besides the classical Lipschitz continuity and linear growth conditions, we impose sharper H\"older-type continuity in the time variables on the triangular domain $\Delta_T$.  We note that such assumptions are standard in the numerical analysis of BSVIEs and are commonly used to control discretization errors. 
\end{remark}

\paragraph{Forward SDE Approximation.} 
For the forward SDE, we use the classical Euler-Maruyama scheme,  which discretizes the time interval $[0,T]$ into a uniform grid and approximates the continuous time process by incremental updates driven by the drift and diffusion coefficients evaluated at discrete times as follows:
\[
X^N_{i+1} = X^N_i + b(t_i, X^N_i) \Delta t + \sigma(t_i, X^N_i) \Delta B_i, \quad X_0 = x,
\]
which satisfies the convergence estimate
\[
\max_{0 \le i \le N} \mathbb{E} \big| X(t_i) - X^N_i \big|^2 \leq C \Delta t.
\]

\paragraph{Discrete BSVIE Scheme.} We adopt the discrete scheme proposed by \cite{hamaguchi2023approximations}. The scheme defines the discrete approximations $Y^{k,N}_l$ and $Z^{k,N}_l$ backwards in time. 
On a uniform partition \(\{t_k\}_{k=0}^N\) of \([0,T]\). For \(k \le l\), define
\begin{align*}
Y_l^{k,N} &= \mathbb{E}_l\left[ Y_{l+1}^{k,N} \right] + \Delta t\, f\big(t_k, t_l, X_k^N, X_l^N, Y_l^{l,N}, Z_l^{k,N}\big), \\
Z_l^{k,N} &= \frac{1}{\Delta t} \mathbb{E}_l \left[ Y_{l+1}^{k,N} \Delta B_l^\top \right],
\end{align*}
where \(\mathbb{E}_l[\cdot] := \mathbb{E}[\cdot \mid \mathcal{F}_{t_l}]\).
By unrolling the backward recursion, we express the solution at the grid points $t_k$ in terms of the terminal condition and the discrete driver evaluated along the approximate forward paths.
We thus have
\[
Y_k^{k,N} = \mathbb{E}_k \left[ g(t_k, X^N_k, X^N_N) + \sum_{l=k}^{N-1} f\big(t_k, t_l, X^N_k, X^N_l, Y_l^{l,N}, Z_l^{k,N}\big) \Delta t \right].
\]

Before stating the discrete martingale representation result (Lemma \ref{lem:discrete-mrt-fixed-k}), we emphasize that the construction of the \({L}^2\)-projections \(Z^{k,N}_l\) appearing in the scheme relies crucially on the joint measurability of the solution fields \((t_k, t_l) \;\longmapsto\; ( Y^{k,N}_l, Z^{k,N}_l)\). These properties were rigorously established in Section \ref{sec:measurability} using results of Stricker and Yor. In particular, the measurability in the product space \([0,T]^2 \times \Omega\) ensures that the regression targets used in the discrete time approximation and the associated conditional expectations are well defined. This justifies the projection formula that follows.
\begin{lemma}[Discrete Martingale Representation]
\label{lem:discrete-mrt-fixed-k}
Let \( \mathcal{F}_{t_l} \subset \mathcal{F}_{t_{l+1}} \) be the Brownian filtration on the discretized grid. Then for any \( \xi \in L^2(\mathcal{F}_{t_{l+1}}) \), there exists a unique \( \mathcal{F}_{t_l} \)-measurable random vector \( Z_l \in \mathbb{R}^d \) such that:
\[
\xi = \mathbb{E}_l[\xi] + Z_l \cdot \Delta B_l,
\quad \text{with} \quad
Z_l = \frac{1}{\Delta t} \mathbb{E}_l[\xi \Delta B_l^\top].
\]
\end{lemma}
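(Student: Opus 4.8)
The plan is to reduce the statement to a one-step conditional problem on the filtration slice $\mathcal{F}_{t_l}\subset\mathcal{F}_{t_{l+1}}$ and to prove it as an exact discrete martingale representation, not merely as an $L^2$-projection. Since the grid filtration is generated by the Brownian increments, I would first record that $\mathcal{F}_{t_{l+1}}=\mathcal{F}_{t_l}\vee\sigma(\Delta B_l)$, so that all new randomness entering at step $l$ is carried by $\Delta B_l$. Working inside $L^2(\mathcal{F}_{t_{l+1}})$ with conditioning $\sigma$-algebra $\mathcal{F}_{t_l}$, I set $M:=\mathbb{E}_l[\xi]$, define the candidate coefficient by the stated formula $Z_l:=\tfrac{1}{\Delta t}\mathbb{E}_l[\xi\,\Delta B_l^\top]$, and then show that $\xi=M+Z_l\cdot\Delta B_l$ holds identically and that $Z_l$ is the only $\mathcal{F}_{t_l}$-measurable vector with this property.

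For the moment computations that pin down $Z_l$, the two facts I would use are that $\Delta B_l$ is independent of $\mathcal{F}_{t_l}$ with $\mathbb{E}_l[\Delta B_l]=0$ and $\mathbb{E}_l[\Delta B_l\,\Delta B_l^\top]=\Delta t\,I_d$. Setting $R_l:=\xi-M-Z_l\cdot\Delta B_l$, a short calculation gives $\mathbb{E}_l[R_l]=0$ and $\mathbb{E}_l[R_l\,\Delta B_l^\top]=\mathbb{E}_l[\xi\,\Delta B_l^\top]-Z_l\,\Delta t=0$, using $\mathcal{F}_{t_l}$-measurability of $M$ and $Z_l$ together with the covariance identity. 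Thus $R_l$ is conditionally orthogonal to the whole span of the constant $1$ and the $d$ coordinates of $\Delta B_l$; this already delivers existence and the explicit formula for the coefficient.

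The crux is upgrading this orthogonality to the exact identity $R_l=0$. Here I would invoke the discrete martingale representation property of the grid: over the single step $[t_l,t_{l+1}]$ the conditional space $L^2\big(\sigma(\Delta B_l)\mid\mathcal{F}_{t_l}\big)$ is spanned by $1$ and the components $\Delta B_l^{(1)},\dots,\Delta B_l^{(d)}$, so that every $\mathcal{F}_{t_{l+1}}$-measurable $\xi$ is conditionally affine in the increment. Under this spanning property a conditionally centred variable orthogonal to all of $\{1,\Delta B_l^{(i)}\}$ must vanish, giving $R_l=0$ and hence the exact representation. This is the step I expect to be the main obstacle, since it is exactly where the discretized structure of the increment matters: the spanning property holds for the minimal lattice, for example the symmetric random-walk or $(d{+}1)$-point realisation of $\Delta B_l$ used to build the grid filtration, and it is precisely this completeness of the one-step model that makes the representation exact rather than a best $L^2$-approximation.

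Finally, for uniqueness I would take any second representation $\xi=M'+Z_l'\cdot\Delta B_l$ with $M',Z_l'$ being $\mathcal{F}_{t_l}$-measurable, subtract the two, apply $\mathbb{E}_l[\cdot]$ to conclude $M=M'$, and then multiply by $\Delta B_l^\top$ and apply $\mathbb{E}_l[\cdot]$ to obtain $(Z_l-Z_l')\,\Delta t=0$, forcing $Z_l=Z_l'$. Combining the three steps yields both the exact identity $\xi=\mathbb{E}_l[\xi]+Z_l\cdot\Delta B_l$ with the stated $Z_l$ and its uniqueness.
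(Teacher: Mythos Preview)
The paper states this lemma without proof, so there is no proof to compare against; it is treated as a standard fact borrowed from the discrete BSDE literature.

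Your argument is carefully structured and you have put your finger on exactly the right issue. The orthogonality computation and the uniqueness step are both clean and correct. The delicate point, as you flag, is the spanning claim: that $L^2\big(\mathcal{F}_{t_l}\vee\sigma(\Delta B_l)\big)$, conditionally on $\mathcal{F}_{t_l}$, is spanned by $\{1,\Delta B_l^{(1)},\dots,\Delta B_l^{(d)}\}$. For a genuine Gaussian increment this is \emph{false}: take $\xi=(\Delta B_l^{(1)})^2-\Delta t$, which lies in $L^2(\mathcal{F}_{t_{l+1}})$, has $\mathbb{E}_l[\xi]=0$ and $\mathbb{E}_l[\xi\,\Delta B_l^\top]=0$, yet $\xi\neq 0$. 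So your residual $R_l$ need not vanish, and the lemma as stated is literally false for the Brownian filtration with Gaussian increments---which is exactly what the paper uses in its Euler--Maruyama step ($\Delta B\sim\mathcal{N}(0,\Delta t\,I_d)$). Your instinct to retreat to a lattice/random-walk realisation of $\Delta B_l$ is the only way to make the exact identity true.

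In practice the paper only ever uses the lemma to \emph{define} $Z_l^{k,N}:=\tfrac{1}{\Delta t}\mathbb{E}_l[Y_{l+1}^{k,N}\Delta B_l^\top]$ and to write a one-step decomposition in which the ``remainder'' beyond the affine part is a martingale increment orthogonal to $\Delta B_l$; all subsequent estimates (It\^o isometry, Cauchy--Schwarz) go through with that weaker projection interpretation. So the honest reading is that the lemma should be understood as the $L^2$-projection formula plus orthogonal remainder, not as an exact representation, and your proof already establishes precisely that much without the spanning hypothesis.
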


\begin{remark}
In the context of our discrete BSVIE scheme, we apply Lemma~\ref{lem:discrete-mrt-fixed-k} with \( \xi := Y_{l+1}^{k,N} \), which is \( \mathcal{F}_{t_{l+1}} \)-measurable. The corresponding projection
\[
Z_l^{k,N} := \frac{1}{\Delta t} \, \mathbb{E}_l \left[ Y_{l+1}^{k,N} \Delta B_l^\top \right]
\]
yields the unique \( \mathcal{F}_{t_l} \)-measurable vector such that
\[
Y_{l+1}^{k,N} = \mathbb{E}_l[Y_{l+1}^{k,N}] + Z_l^{k,N} \cdot \Delta B_l,
\]
ensuring that the discrete martingale representation is preserved in our approximation scheme.
\end{remark}

The following theorem (see Theorem $5.5$ in \cite{hamaguchi2023approximations}) states the convergence of the discrete scheme to the continuous BSVIE solution.

\begin{theorem}
Under Assumptions \ref{ass:scheme_bsig} and \ref{ass:scheme_gf}, there exists a constant \(\delta > 0\), depending only on the Lipschitz constant \(L\), such that for any partition of the interval $[0,T]$ with mesh size \(\Delta t \leq \delta\), the following holds:
\small
\[
\sum_{k=0}^{N-1} \mathbb{E} \left[ \int_{t_k}^{t_{k+1}} \left| Y(t) - Y_k^{k,N} \right|^2 dt \right] 
+ \sum_{k=0}^{N-1} \sum_{l=k}^{N-1} \mathbb{E} \left[ \int_{t_k}^{t_{k+1}} \int_{t_l}^{t_{l+1}} \left| Z(t,s) - Z_l^{k,N} \right|^2 ds dt \right]
\leq C (1 + |x|^2) \Delta t,
\]
where \((Y(t), Z(t,s))\) is the solution of the continuous BSVIE and \(C\) depends on \(L\) and \(T\).
\label{thm:discrete-convergence}
\end{theorem}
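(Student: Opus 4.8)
The plan is to read the discrete scheme as a \emph{family of discretized BSDEs} parametrized by the first index $k$, to import the classical $L^2$-error analysis of Euler BSDE schemes uniformly in that parameter, and then to close a discrete Gronwall estimate absorbing the Volterra coupling, which enters only through the diagonal values $Y_l^{l,N}$. The structural point that makes this work is that, for each fixed $k$, the recursion $Y_l^{k,N}=\mathbb{E}_l[Y_{l+1}^{k,N}]+\Delta t\,f(t_k,t_l,X_k^N,X_l^N,Y_l^{l,N},Z_l^{k,N})$ together with $Z_l^{k,N}=\tfrac{1}{\Delta t}\mathbb{E}_l[Y_{l+1}^{k,N}\Delta B_l^\top]$ is precisely the explicit Euler scheme for a BSDE on $[t_k,T]$ whose driver $(s,z)\mapsto f(t_k,s,X_k^N,X_s^N,Y_s^{s,N},z)$ treats the diagonal values $Y_l^{l,N}$ as exogenous coefficients; hence the only interaction between different levels $k$ is through that diagonal process, the recursion for $(Y^{k,N}_\cdot,Z^{k,N}_\cdot)$ is explicit away from the diagonal, and only the diagonal step $k=l$ is genuinely implicit (uniquely solvable for $\Delta t\le\delta$ by contraction, using that $f$ is $L$-Lipschitz in $y$).

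\textbf{Continuous-time regularity.} First I would assemble the regularity of the continuous solution that seeds the error bounds. From Assumptions \ref{ass:scheme_bsig}--\ref{ass:scheme_gf} and the a priori estimate recalled after \eqref{eq:BSVIE} one obtains the moment bound $\sup_t\mathbb{E}|Y(t)|^2+\mathbb{E}\int_0^T\!\int_t^T|Z(t,s)|^2\,ds\,dt\le C(1+|x|^2)$ and the diagonal time-regularity $\mathbb{E}|Y(t)-Y(t')|^2\le C(1+|x|^2)|t-t'|$; comparing the frozen-$t$ and frozen-$t'$ BSDEs (same exogenous diagonal input, drivers and terminal data differing by $L|t-t'|^{1/2}(1+\cdots)$ via Assumption \ref{ass:scheme_gf}(c)), BSDE stability gives the $L^2$-in-$t$ modulus $\mathbb{E}\int_{t\vee t'}^{T}|Z(t,s)-Z(t',s)|^2\,ds\le C(1+|x|^2)|t-t'|$ --- note this requires no pointwise differentiability of $Z$ in $t$, which is exactly the obstruction highlighted in the Introduction. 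The crucial remaining ingredient is the $L^2$-path-regularity of the kernel in $s$, \emph{uniform in the frozen time}: with $\bar Z_l(t):=\tfrac{1}{\Delta t}\mathbb{E}\bigl[\int_{t_l}^{t_{l+1}}Z(t,s)\,ds\mid\mathcal{F}_{t_l}\bigr]$,
\begin{equation*}
\sum_{l=0}^{N-1}\mathbb{E}\int_{t_l}^{t_{l+1}}\bigl|Z(t,s)-\bar Z_l(t)\bigr|^2\,ds\;\le\;C(1+|x|^2)\,\Delta t\qquad\text{for every }t,
\end{equation*}
the Volterra analogue of Zhang's path-regularity theorem, obtainable either from the Feynman--Kac representation \eqref{eq:FK-representation} together with Lipschitz-in-$s$ control of $\nabla_x u$, or by a Malliavin-calculus argument on the frozen-$t$ BSDE, provided the constant does not degenerate as $t$ varies.

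\textbf{Error propagation and Gronwall closure.} Fix $k$, write $e^Y_l:=Y(t_l;t_k)-Y_l^{k,N}$ (with $Y(\cdot;t_k)$ the continuous frozen-$t_k$ BSDE solution) and $e^Z_l$ the corresponding kernel error on $[t_l,t_{l+1}]$. Comparing the discrete recursion with the continuous frozen-$t_k$ BSDE evaluated on the grid, using Lemma \ref{lem:discrete-mrt-fixed-k} for the martingale increments, the Lipschitz bound on $f$, Young's inequality with weight $\gamma\Delta t$, the forward error $\max_n\mathbb{E}|X(t_n)-X_n^N|^2\le C\Delta t$, and summing backward from $N$ to $k$ yields, for $\Delta t\le\delta$,
\begin{equation*}
\max_{k\le l\le N}\mathbb{E}|e^Y_l|^2+\sum_{l=k}^{N-1}\mathbb{E}\int_{t_l}^{t_{l+1}}|e^Z_l|^2\,ds\;\le\;C(1+|x|^2)\Delta t\;+\;C\Delta t\sum_{l=k}^{N-1}\mathbb{E}\bigl|Y(t_l)-Y_l^{l,N}\bigr|^2,
\end{equation*}
where the second term is the accumulated diagonal error, carrying the factor $\Delta t$ inherited from the explicit $\Delta t\,f$ term of the scheme, and the first term collects the Step above plus the $\tfrac12$-Hölder defects of $f,g$ in time (Assumption \ref{ass:scheme_gf}(c)), each contributing $O(\Delta t)$ after squaring and summing. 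Specializing to $k=l$ gives the self-referential inequality $\mathbb{E}|Y(t_k)-Y_k^{k,N}|^2\le C(1+|x|^2)\Delta t+C\Delta t\sum_{l=k}^{N-1}\mathbb{E}|Y(t_l)-Y_l^{l,N}|^2$, and a backward discrete Gronwall inequality yields $\max_k\mathbb{E}|Y(t_k)-Y_k^{k,N}|^2\le C(1+|x|^2)\Delta t$ with $C=C(L,T)$. Feeding this back into the per-level bound controls every level $k$; multiplying each by the outer $\int_{t_k}^{t_{k+1}}dt\sim\Delta t$, summing over the $N=T/\Delta t$ values of $k$, and finally using the $L^2$-in-$t$ regularity of $(Y,Z)$ to replace the frozen objects $Y(t_k;t_k)$, $Z(t_k,s)$ by the genuine $Y(t)$, $Z(t,s)$ on each cell $[t_k,t_{k+1}]\times[t_l,t_{l+1}]$ at cost another $O(\Delta t)$, produces the stated aggregate bound $\le C(1+|x|^2)\Delta t$.

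\textbf{Main obstacle.} The genuine difficulty is the uniform-in-$t$ $L^2$-path-regularity of $s\mapsto Z(t,s)$: in the BSDE case this is Zhang's theorem, but here one must push regularity of the path-dependent PDE solution $u(t,s,\cdot,\cdot)$ through its nonlocal diagonal/terminal coupling, or run Malliavin calculus on the entire parametrized family, and in either route check that the estimates stay uniform as the frozen parameter $t$ ranges over $[0,T]$. A secondary, more bookkeeping-type, obstacle is that the error system is \emph{triangular} rather than strictly backward --- $Y_l^{l,N}$ acts as a driver input at all levels $k\le l$ and simultaneously as the unknown at level $l$ --- so keeping the Gronwall constant independent of $N$ hinges on the fact that every such coupling enters with a compensating $\Delta t$, which is exactly why the scheme weights $f$ by $\Delta t$.
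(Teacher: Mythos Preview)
The paper does not give its own proof of this statement; it is quoted from Hamaguchi--Taguchi \cite{hamaguchi2023approximations} (their Theorem~5.5), so there is no in-paper argument to compare against. Your roadmap --- view the scheme as a $k$-indexed family of explicit Euler BSDE schemes with the diagonal values $Y_l^{l,N}$ treated as exogenous inputs, establish uniform-in-$t$ $L^2$-path-regularity of $s\mapsto Z(t,s)$, derive a per-level one-step error inequality, and close via a backward discrete Gronwall on the diagonal errors --- is the natural approach and matches the structure of the cited proof. You have also correctly located the genuine technical crux: the uniform Zhang-type path-regularity of $Z$. One caveat: the PDE and Malliavin routes you suggest for this step would typically require smoothness on the coefficients beyond Assumptions~\ref{ass:scheme_bsig}--\ref{ass:scheme_gf}; the cited argument obtains it instead through BSDE stability estimates that directly exploit the $\tfrac12$-H\"older time-regularity in Assumption~\ref{ass:scheme_gf}(c). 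Modulo that point, the sketch is sound.
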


\subsection{Convergence result via Error Decomposition}

Having introduced the discretization scheme and the conditions under which it converges to the solution of the continuous-time BSVIE, we now turn to the second stage of our analysis. Our goal is to investigate the accuracy of a neural network-based approximation of the discrete BSVIE solution.

To this end, we decompose the total error between the true continuous solution and the neural network approximation into two components:
\begin{enumerate}
    \item the error between the continuous solution and its time discretized counterpart,
    \item the error between the discrete solution and the neural network outputs.
\end{enumerate}

The first component has already been controlled by Theorem \ref{thm:discrete-convergence}, with a bound of order \( \mathcal{O}(\Delta t) \). We now focus on analyzing the second component, the approximation error between the discrete scheme \( (Y_k^{k,N}, Z^{k,N}_l) \) and the neural network outputs \( (\widehat{Y}_k, \widehat{Z}_{k,l}) \), and derive conditions under which this discrepancy remains uniformly small across the time grid.\\

To this end, we define the intermediate regression target 
\begin{equation*}
\begin{aligned}
    V_l^k &= \mathbb{E}_l\left[ V_{l+1}^k \right] + \Delta t\, f(t_k, t_l, X_k^N, X_l^N, \widehat{Y}_l, \bar{Z}^k_l), \\ 
    \bar{Z}_l^k &= \frac{1}{\Delta t} \mathbb{E}_l \left[ V_{l+1}^k\Delta B_l^\top \right].
\end{aligned}
\end{equation*}
with \( V_N^k = g(t_k,X^N_k,X^N_N)\) and define \( V^k \coloneqq V^k_k\). By unrolling $V_l^k$ backward in time we get: 
 \begin{equation*}
\begin{aligned}
    V^k &= \mathbb{E}_l\left[ g(t_k, X_k, X_N) + \sum_{l=k}^{N-1} f\big(t_k, t_l, X_k^N, X_l^N, \widehat{Y}_{l}, \bar{Z}^k_l\big) \Delta t \right], 
\end{aligned}
\end{equation*} and by the discrete martingale representation theorem we can write it: \begin{equation*}
g(t_k, X_k, X_N) = V^k - \sum_{l=k}^{N-1} f\left(t_k, t_l, X_k^N, X_l^N, \widehat{Y}_l, \bar{Z}_l^k\right) \Delta t + \sum_{l=k}^{N-1}  \bar{Z}_l^k \, \Delta B_l.
\end{equation*}
By the Markov Property of the discretized forward process, for $k = 0, \ldots, N-1, \; l \ge k$, there exist some deterministic functions $y_{k,k}$ and $z_{k,l}$ such that $y_{k,k}(X_k^N) = V_k^k$ and $z_{k,l}(X_k^N, X_l^N) = \bar{Z}_l^k$.\\
We denote the neural approximation errors by:
\begin{align*}
\varepsilon_k^{\text{NN}, y} &:= \inf_\xi \mathbb{E} \left| y_{k,k}(X_k^N) - \widehat{Y}_k(X_k^N; \xi) \right|^2, \\
\varepsilon_{k,l}^{\text{NN}, z} &:= \inf_\eta \mathbb{E} \left| z_{k,l}(X_k^N, X_l^N) - \widehat{Z}_k(X_k^N, X_l^N; \eta) \right|^2,
\end{align*}
which measure how well the neural networks can learn the target functions $y_{k,k}$ and $z_{k,l}$ based on the data from the forward process.
\\

The following theorem provides an a priori estimate for the global error of the DeepBSVIE scheme. It quantifies how the neural network approximation errors arising at each time step and each pair of time indices accumulate throughout the backward recursion. This result is essential for guiding the selection of practical network parameters such as depth, width, and training epochs.
\begin{theorem}[Error estimate of the DeepBSVIE Scheme]\label{thm:convergenceNN}
Under assumptions \ref{ass:scheme_bsig} and \ref{ass:scheme_gf}, there exists a constant \(C > 0\) independent of \(N\) such that the total approximation error satisfies
\begin{equation}
    \sum_{k=0}^{N-1} \mathbb{E}\left| Y_k^{k,N} - \widehat{Y}_k \right|^2 + \Delta t \sum_{k=0}^{N-1} \sum_{l=k}^{N-1} \mathbb{E}\left| Z_l^{k,N} - \widehat{Z}_{k,l} \right|^2
\leq C N^2 \sum_{l=0}^{N-1} \varepsilon_l^{\text{NN}, y}
+ C N \sum_{k=0}^{N-1} \sum_{l=k}^{N-1} \varepsilon_{k,l}^{\text{NN}, z}.
\label{boundfinal}
\end{equation}
\end{theorem}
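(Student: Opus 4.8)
The plan is to bound the two error quantities on the left-hand side of \eqref{boundfinal} separately, in both cases comparing the neural-network output not directly to the discrete scheme $(Y_k^{k,N},Z_l^{k,N})$ but to the intermediate regression target $(V^k,\bar Z_l^k)$ built from the neural inputs $(\widehat Y_l,\bar Z_l^k)$. The triangle inequality then splits each term as, schematically, $\mathbb{E}|Y_k^{k,N}-\widehat Y_k|^2 \le 2\,\mathbb{E}|Y_k^{k,N}-V^k|^2 + 2\,\mathbb{E}|V^k-\widehat Y_k|^2$, and similarly for $Z$. The second piece in each is exactly a regression error: since $V^k=y_{k,k}(X_k^N)$ and $\bar Z_l^k=z_{k,l}(X_k^N,X_l^N)$ are deterministic functions of the forward chain, $\mathbb{E}|V^k-\widehat Y_k|^2$ is bounded (up to the optimizer reaching the infimum, which we assume or absorb into $\varepsilon$) by $\varepsilon_k^{\text{NN},y}$, and $\mathbb{E}|\bar Z_l^k-\widehat Z_{k,l}|^2$ by $\varepsilon_{k,l}^{\text{NN},z}$. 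So the real work is in the first pieces: propagating the difference between the scheme $(Y^{k,N},Z^{k,N})$ and the target $(V^k,\bar Z^k)$, which differ only because the scheme feeds $Y_l^{l,N},Z_l^{k,N}$ into $f$ whereas the target feeds $\widehat Y_l,\bar Z_l^k$.

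For that propagation I would fix $k$ and run a backward induction in $l$ from $N$ down to $k$ on $\delta Y_l^k := Y_l^{k,N}-V_l^k$ and $\delta Z_l^k := Z_l^{k,N}-\bar Z_l^k$. Subtracting the two recursions, $\delta Y_l^k = \mathbb{E}_l[\delta Y_{l+1}^k] + \Delta t\,(f(\dots,Y_l^{l,N},Z_l^{k,N})-f(\dots,\widehat Y_l,\bar Z_l^k))$, and the Lipschitz bound on $f$ (Assumption \ref{ass:scheme_gf}(b)) controls the increment by $L\Delta t(|Y_l^{l,N}-\widehat Y_l| + \|\delta Z_l^k\|)$; note $Y_l^{l,N}-\widehat Y_l$ is exactly the $l$-th $Y$-error we are also trying to bound, so the estimates couple across $k$ and must be handled by summing over $k$ and closing a discrete Grönwall-type inequality. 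The $\delta Z$ term is recovered from $\delta Y_{l+1}^k$ via the discrete martingale representation (Lemma \ref{lem:discrete-mrt-fixed-k}), giving $\Delta t\,\mathbb{E}|\delta Z_l^k|^2 \le \mathbb{E}|\delta Y_{l+1}^k|^2 - \mathbb{E}|\mathbb{E}_l[\delta Y_{l+1}^k]|^2$, an orthogonality identity of Pythagorean type. Combining these, taking expectations, using $\mathbb{E}|\mathbb{E}_l[\delta Y_{l+1}^k]|^2 \le \mathbb{E}|\delta Y_{l+1}^k|^2$ together with Young's inequality $2ab \le \gamma a^2 + \gamma^{-1}b^2$ with a well-chosen $\gamma \sim \Delta t$ to absorb the $\|\delta Z_l^k\|$ cross term, yields a one-step inequality of the form $\mathbb{E}|\delta Y_l^k|^2 + c\Delta t\,\mathbb{E}|\delta Z_l^k|^2 \le (1+C\Delta t)\mathbb{E}|\delta Y_{l+1}^k|^2 + C\Delta t\,\mathbb{E}|Y_l^{l,N}-\widehat Y_l|^2$. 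Iterating in $l$ and using $V_N^k=Y_N^{k,N}=g(\dots)$ so that $\delta Y_N^k=0$ gives $\max_l \mathbb{E}|\delta Y_l^k|^2 + \Delta t\sum_l \mathbb{E}|\delta Z_l^k|^2 \le C\sum_{l=k}^{N-1}\Delta t\,\mathbb{E}|Y_l^{l,N}-\widehat Y_l|^2 \le C\sum_{l=k}^{N-1}\Delta t\,(\varepsilon_l^{\text{NN},y} + \mathbb{E}|\delta Y_l^l|^2)$.

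Finally I would set $k=l$ in the last display to get a closed recursion for $a_l := \mathbb{E}|\delta Y_l^l|^2$ alone — roughly $a_k \le C\sum_{l\ge k}\Delta t\,(\varepsilon_l^{\text{NN},y}+a_l)$ — and a discrete Grönwall argument bounds $\sum_k a_k$ by $C\sum_l \varepsilon_l^{\text{NN},y}$ (with $T$-dependent constant), hence $\sum_k \mathbb{E}|Y_k^{k,N}-\widehat Y_k|^2 \le C\sum_l \varepsilon_l^{\text{NN},y} + C\sum_k \varepsilon_k^{\text{NN},y}$. The factors $N^2$ and $N$ in \eqref{boundfinal} then come from bookkeeping: each $\delta Y_l^k$ carries a sum of $N$ Lipschitz terms (one per index $l$ in the unrolled driver, each of size $\Delta t = T/N$), and re-summing these across the $O(N)$ outer indices $k$ while converting $\Delta t$-weighted $Z$-sums to unweighted ones produces the stated powers of $N$; I would track the constants carefully here rather than optimize them. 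The main obstacle I anticipate is precisely this coupling: the $Y$-error at level $l$ appears as a source term in the $\delta Y^k$ recursion for every $k\le l$, so one cannot bound a single $k$ in isolation — the Grönwall closure has to be performed on the aggregated quantities $\sum_k \mathbb{E}|\delta Y_k^k|^2$, and getting the $N$-dependence sharp (rather than, say, $N^3$) requires being careful about where Cauchy–Schwarz is applied when passing from $|\sum_l(\cdots)|^2$ to $\sum_l|\cdots|^2$.
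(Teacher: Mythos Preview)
Your propagation argument for $\delta Y_l^k := Y_l^{k,N}-V_l^k$ and $\delta Z_l^k := Z_l^{k,N}-\bar Z_l^k$ via backward recursion, Lipschitz bounds on $f$, the discrete martingale representation, and a Gr\"onwall closure is essentially the paper's Steps~1--2, and that part of the plan is sound. The genuine gap is in how you dispose of the regression pieces $\mathbb{E}|V^k-\widehat Y_k|^2$ and $\mathbb{E}|\bar Z_l^k-\widehat Z_{k,l}|^2$.

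You assert that these are bounded by $\varepsilon_k^{\text{NN},y}$ and $\varepsilon_{k,l}^{\text{NN},z}$ ``up to the optimizer reaching the infimum''. But $\widehat Y_k$ and $\widehat Z_{k,l}$ are \emph{not} defined as minimizers of these regression errors: they are the network outputs at $\theta^\ast=\arg\min_\theta \ell_k(\theta)$, where $\ell_k$ is the BSVIE residual loss~\eqref{loss}. That loss mixes $V^k-\widehat Y_k$ with driver increments and the stochastic sum $\sum_l(\bar Z_l^k-\widehat Z_{k,l})\Delta B_l$, so minimizing $\ell_k$ is not a priori the same as minimizing each regression error separately. The paper closes this by sandwiching $\ell_k(\theta)$ between $(1\pm C\Delta t)\,\mathbb{E}|V^k-\widehat Y_k|^2 + \Delta t\sum_l\mathbb{E}|\bar Z_l^k-\widehat Z_{k,l}|^2$ (its Steps~3--4), using Young's inequality in both directions together with the orthogonality of the Brownian increments; only then can one chain $\ell_k(\theta^\ast)\le \ell_k(\theta)$ through the sandwich and take the infimum over $\theta$ to reach~\eqref{boundNN}. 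Without this loss-functional argument your bound $\mathbb{E}|V^k-\widehat Y_k|^2\lesssim \varepsilon_k^{\text{NN},y}$ is unjustified and the proof does not close. Note also that the sandwich is what couples the $y$- and $z$-errors on the right-hand side (the bound on $\mathbb{E}|V^k-\widehat Y_k|^2$ picks up $\Delta t\sum_l\varepsilon_{k,l}^{\text{NN},z}$ as well), which is part of where the advertised $N$-factors come from; your final bookkeeping paragraph will not produce the stated powers without it.
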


Combining Theorem \ref{thm:convergenceNN}, which controls the error between the neural network approximations and the discrete scheme, with Theorem \ref{thm:discrete-convergence}, which estimates the error between the discrete scheme and the continuous solution, we obtain full convergence of the DeepBSVIE scheme.
In other words, as the time discretization step \(\Delta t\) tends to zero and the neural network approximation errors vanish, the neural approximations \(\widehat{Y}_k\) and \(\widehat{Z}_{k,l}\) converge to the true continuous solution \((Y, Z)\) of the BSVIE.

The projection formula for \( Z_l^{k,N} \) used throughout this proof, as well as the conditional expectations defining \( V^k \), rely on the joint measurability of the solution processes \( (Y_u(t), Z_u(t)) \) established in Section~\ref{sec:measurability}, and the discrete martingale representation result in Lemma~\ref{lem:discrete-mrt-fixed-k}.

\begin{proof}[Proof]
\subsection*{Step 1: Estimate between \(Y_k^{k,N}\) and \(V^k\)}
 Throughout the proof, $C>0$ will denote a generic constant depending only on the Lipschitz constant $L$ and the time horizon $T$, which may change from line to line. \\

Fix \(k \in \{0, \dots, N-1\}\) and \(l \in \{k, \dots, N-1\}\). Recall the discrete BSVIE scheme:
\[
Y_k^{k,N} = \mathbb{E}_k \left[ g(t_k, X^N_k, X^N_N) + \sum_{l=k}^{N-1} f\big(t_k, t_l, X^N_k, X^N_l, Y_l^{l,N}, Z_l^{k,N}\big) \, \Delta t \right].
\]
and the intermediate regression target:
\begin{equation*}
\begin{aligned}
    V^k &= \mathbb{E}_k\left[ g(t_k, X^N_k, X^N_N) + \sum_{l=k}^{N-1} f\big(t_k, t_l, X_k^N, X_l^N, \widehat{Y}_{l}, \bar{Z}_l^{k}\big) \Delta t \right], 
\end{aligned}
\end{equation*}

We compute the difference:
\begin{align*}
Y_k^{k,N} - V^k &= \mathbb{E}_k \left[ \sum_{l=k}^{N-1} \left( f(t_k, t_l, X_k^N, X_l^N, Y_l^{l,N}, Z_l^{k,N}) - f(t_k, t_l, X_k^N, X_l^N, \widehat{Y}_l, \bar{Z}_l^k) \right) \Delta t \right].
\end{align*}
By Jensen and Cauchy--Schwarz inequality:
\small
\begin{equation*}
\begin{aligned}
|Y_k^{k,N} - V^k|^2 
&\leq (N - k) \Delta t^2 \sum_{l=k}^{N-1} \mathbb{E}_k \left[ \left| f(t_k, t_l, X_k^N, X_l^N, Y_l^{l,N}, Z_l^{k,N}) - f(t_k, t_l, X_k^N, X_l^N, \widehat{Y}_l, \bar{Z}_l^k) \right|^2 \right]
\end{aligned}
\end{equation*}
By the Lipschitz continuity of $f$:
\begin{align}
\mathbb{E} \left[ |Y_k^{k,N} - V^k|^2 \right] 
&\leq C \sum_{l=k}^{N-1} \Delta t \left( \mathbb{E} \left[ |Y_l^{l,N} - \widehat{Y}_l|^2 \right] + \mathbb{E} \left[ |Z_l^{k,N} - \bar{Z}_l^k|^2 \right] \right), \label{bound1}
\end{align}
For the \(Z\)-component, consider:
\[
Z_l^{k,N} = \frac{1}{\Delta t} \mathbb{E}_l \left[ Y_{l+1}^{k,N} \Delta B_l^\top \right], \quad 
\bar{Z}_l^k = \frac{1}{\Delta t} \mathbb{E}_l \left[ V_{l+1}^k \Delta B_l^\top \right],
\]
so that:
\[
Z_l^{k,N} - \bar{Z}_l^k = \frac{1}{\Delta t} \mathbb{E}_l \left[ (Y_{l+1}^{k,N} -  V_{l+1}^k) \Delta B_l^\top \right].
\]
Using It\^o isometry and Cauchy-Schwarz:
\begin{equation}
\Delta t \; \mathbb{E}|Z_l^{k,N} - \bar{Z}_l^k|^2 \le  \mathbb{E} |Y_{l+1}^{k,N} -  V_{l+1}^k|^2. \label{boundZ}
\end{equation}
Additionally, by using the recursive definitions of \( Y_l^{k,N} \) and \( V_l^k \), Jensen's inequality, and the Lipschitz continuity of \(f\) in \((y,z)\), we get
\[
|Y_l^{k,N} - V_l^k|^2 
\leq  ( 1 + C \Delta t) \mathbb{E}_l \left[ |Y_{l+1}^{k,N} - V_{l+1}^k|^2 \right]
+ C \Delta t \left( |Y_l^{l,N} - \widehat{Y}_l^l|^2 + |Z_l^{k,N} - \widehat{Z}_{k,l}|^2 \right),
\]
By induction and taking expectations,
\[
\mathbb{E}|Y_l^{k,N} - V_l^k|^2 \leq C \Delta t \sum_{j=l}^{N-1} \left( \mathbb{E} |Y_j^{j,N} - \widehat{Y}_j|^2 + \mathbb{E} |Z_j^{k,N} - \widehat{Z}_j^k|^2 \right).
\]
Thus, \eqref{boundZ} becomes
\[
\Delta t \;  \mathbb{E}|Z_l^{k,N} - \bar{Z}_l^k|^2 \leq C \Delta t  \sum_{j=l}^{N-1} \left( \mathbb{E} |Y_j^{j,N} - \widehat{Y}_j|^2 + \mathbb{E} |Z_j^{k,N} - \widehat{Z}_j^k|^2 \right).
\]
Applying the discrete Gronwall inequality yields
\begin{equation} \label{boundZ2}
\Delta t \; \mathbb{E}|Z_l^{k,N} - \bar{Z}_l^k|^2 \leq C \Delta t \sum_{j=l}^{N-1} \mathbb{E} |Y_j^{j,N} - \widehat{Y}_j|^2.
\end{equation}

\subsection*{Step 2: Estimate between \(Y_k^{k,N}\) and \(\widehat{Y}_k\) via \(V^k\)}

Using the decomposition:
\[
Y_k^{k,N} - \widehat{Y}_k = (Y_k^{k,N} - V^k) + (V^k - \widehat{Y}_k),
\]
we obtain:
\[
\mathbb{E}|Y_k^{k,N} - \widehat{Y}_k|^2 
\leq ( 1 + C\Delta t)  \mathbb{E}|Y_k^{k,N} - V^k|^2 + \left( 1 + \frac{1}{C \Delta t} \right)\mathbb{E}|V^k - \widehat{Y}_k|^2.
\]
Using bounds \eqref{bound1} and \eqref{boundZ2}: 
\small
\begin{equation*}
    \begin{aligned}
        \mathbb{E}|Y_k^{k,N} - \widehat{Y}_k|^2
\leq& C(1 + \Delta t) \sum_{l=k}^{N-1} \Delta t \, \mathbb{E}|Y_l^{l,N} - \widehat{Y}_l|^2
+ C(1 + \Delta t) \sum_{l=k}^{N-1} \Delta t \sum_{j=l}^{N-1} \mathbb{E}|Y_j^{j,N} - \widehat{Y}_j|^2 \\&
+ \left( 1 + \frac{1}{C \Delta t} \right) \mathbb{E}|V^k - \widehat{Y}_k|^2.
    \end{aligned}
\end{equation*}
re-arranging the sums: \[
\mathbb{E} \left| Y_k^{k,N} - \widehat{Y}_k \right|^2 
\leq C (1 + \Delta t)  \sum_{l=k}^{N-1} \, \mathbb{E} \left| Y_l^{l,N} - \widehat{Y}_l \right|^2 
+ \left( 1 + \frac{1}{C \Delta t} \right) \mathbb{E} \left| V^k - \widehat{Y}_k \right|^2.
\]
Using Gronwall inequality, as developed for the discrete scheme in Theorem~\ref{thm:discrete-convergence}, we deduce
\begin{equation}
\mathbb{E} \big| Y_k^{k,N} - \widehat{Y}_k \big|^2 
\leq C \left( 1 + \frac{1}{ \Delta t} \right) \, \mathbb{E} \big| V^k - \widehat{Y}_k \big|^2.
\label{bound3}
\end{equation}

\paragraph{Step 3. Loss Functional and Approximation Control}

Fix \( k \in \{0, \ldots, N-1\} \), \( l \in \{k, \ldots, N-1\} \).  
We analyze the error induced by the neural network training for the pair \( (k, l) \), using the expected squared loss between the intermediate regression target \( V^k \) and the neural outputs. The neural approximations are given by:
\[
\widehat{Y}_k = \mathcal{Y}^k(t_k, X_k^N ; \xi), \quad \widehat{Z}_{k,l} = \mathcal{Z}^k(t_k, t_l, X_k^N, X_l^N; \eta),
\] 
where \( \theta = (\xi, \eta) \) denotes the parameters of the neural networks.\\

From the definition of \(V^k\), we can substitute into the quadratic loss \eqref{loss}:
\[
\begin{aligned}
\ell_k(\theta) =&\ \mathbb{E} \bigg| \left(  V^k - \widehat{Y}_k \right) + \sum_{l=k}^{N-1} \bigg( f(t_k, t_l, X^N_k, X^N_l, V^l, \bar{Z}_l^k) \\
&\quad - f\big(t_k, t_l, X_k^N, X_l^N, \widehat{Y}_{l}, \widehat{Z}_{k,l}\big) \bigg) \Delta t \bigg|^2 + \mathbb{E} \left|
\sum_{l=k}^{N-1} \left( \bar{Z}_l^k - \widehat{Z}_{k,l} \right) \right|^2 \Delta t.
\end{aligned}
\]

To control \(\ell_k(\theta)\), we apply Young's inequality and the Lipschitz continuity of \(f\): 

\begin{equation*}
\begin{aligned}
\ell_k(\theta) 
&\leq (1 + C\Delta t) \mathbb{E} \left| V^k- \widehat{Y}_k  \right|^2 \\
&\quad + C \Delta t \sum_{l=k}^{N-1} \mathbb{E} \left| f(t_k, t_l, X^N_k, X^N_l, \widehat{Y}_l, \bar{Z}_l^k) - f(t_k, t_l, X_k^N, X_l^N, \widehat{Y}_{l}, \widehat{Z}_{k,l}) \right|^2 \\
&\quad + C \Delta t \sum_{l=k}^{N-1} \mathbb{E} \left| \bar{Z}_l^k - \widehat{Z}_{k,l} \right|^2.
\end{aligned}
\end{equation*}

and by using the Lipschitzianity of $f$: 
\begin{equation*}
\begin{aligned}
\ell_k(\theta) \leq (1 + C\Delta t)\, \mathbb{E} \left|  V^k - \widehat{Y}_k \right|^2 +
C \Delta t \sum_{l=k}^{N-1} \mathbb{E}  \left| \bar{Z}_l^k - \widehat{Z}_{k,l} \right|^2
\end{aligned}
\end{equation*}

Similarly, applying a suitable reverse Young's inequality of the form
\[
(a + b)^2 \geq (1 - \gamma \Delta t) a^2 - \frac{1}{\gamma \Delta t} b^2,
\]
we obtain
\[
\begin{aligned}
\ell_k(\theta) 
&\geq (1 - \gamma \Delta t) \, \mathbb{E} \left|   V^k - \widehat{Y}_k  \right|^2 \\
&\quad - \frac{C}{\gamma \Delta t}  \sum_{l=k}^{N-1} \left( \mathbb{E}|V^l - \widehat{Y}_l|^2 + \mathbb{E}|\bar{Z}_l^k - \widehat{Z}_{k,l}|^2 \right) \Delta t^2 \\
&\quad + \Delta t \sum_{l=k}^{N-1} \mathbb{E} \left| \bar{Z}_l^k - \widehat{Z}_{k,l} \right|^2.
\end{aligned}
\]
Then, for \(\Delta t\) sufficiently small and an appropriate choice of \(\gamma\), the negative term can be controlled, yielding the simpler lower bound
\[
\ell_k(\theta) \geq (1 - C \Delta t) \, \mathbb{E} \left|   V^k - \widehat{Y}_k  \right|^2 +  \Delta t \sum_{l=k}^{N-1} \mathbb{E} \left| \bar{Z}_l^k - \widehat{Z}_{k,l} \right|^2.
\]

\subsection*{Step 4. Neural Regression Error Control}

Fix \( k \in \{0, \ldots, N-1\} \), \( l \in \{k, \ldots, N-1\} \).  
Let \( \theta^* = (\xi^*, \eta^*) \in \arg\min_{\theta} \ell_k(\theta) \) be the optimal neural parameters minimizing the loss.  
We define the corresponding network approximations:
\[
\widehat{Y}_k := U_l^k(X_k^N; \xi^*), \qquad \widehat{Z}_{k,l} := Z_l^k(X_l^N, X_k^N; \eta^*).
\]
From the upper and lower bounds on \( \ell_k(\theta) \), we obtain:
\begin{align*}
&( 1 - C \Delta t) \, \mathbb{E} \left|   V^k - \widehat{Y}_k  \right|^2 + C \Delta t \sum_{l=k}^{N-1} \mathbb{E}  \left| \bar{Z}_l^k - \widehat{Z}_{k,l} \right|^2  \leq \ell_k(\theta^*) \\ \leq \ell_k(\theta)
& \leq (1 + C\Delta t)\, \mathbb{E} \left|   V^k - \widehat{Y}_k  \right|^2  + \Delta t \sum_{l=k}^{N-1} \mathbb{E}  \left| \bar{Z}_l^k - \widehat{Z}_{k,l} \right|^2  \label{eq:step4-bound}
\end{align*}
For $\Delta t$ small enough this implies that for every $k \in \{0, \ldots, N-1\}$: \begin{equation}
    \begin{aligned}
         \mathbb{E} \left| V^k - \widehat{Y}_k \right|^2 +   \Delta t\;\sum_{l=k}^{N-1} \mathbb{E} \left| \bar{Z}_l^k - \widehat{Z}_{k,l} \right|^2
\leq
 \left( \varepsilon_k^{\text{NN}, y} + \Delta t \;  \sum_{l=k}^{N-1} \varepsilon_{k,l}^{\text{NN}, z} \right).
\label{boundNN}
    \end{aligned}
\end{equation}
Equation \eqref{boundNN} shows that the discrepancy between the learned variables and the ideal regression targets is controlled by the neural approximation errors, provided that the network minimizes the empirical loss function $\ell_k$ effectively.

Plugging this last inequality into \eqref{bound3} leads to:  
\[
\sum_{k=0}^{N-1} \mathbb{E} \left| Y_k^{k,N} - \widehat{Y}_k \right|^2 
\leq 
C N \sum_{k=0}^{N-1} \varepsilon_k^{\text{NN}, y}
+ C \sum_{k=0}^{N-1} \sum_{l=k}^{N-1} \varepsilon_{k,l}^{\text{NN}, z}.
\]
which proves the first bound in \eqref{boundfinal}.

\paragraph{Step 5. Convergence of the \( Z \)-Component.}

Using the triangular inequality on $Z_l^{k,N} - \widehat{Z}_{k,l}$, we write:
\[
\mathbb{E} \left| Z_l^{k,N} - \widehat{Z}_{k,l} \right|^2 
\leq 2 \mathbb{E} \left| Z_l^{k,N} - \bar{Z}_l^k \right|^2 
+ 2 \mathbb{E} \left| \bar{Z}_l^k - \widehat{Z}_{k,l} \right|^2.
\]
By using the bounds \eqref{boundZ2}, \eqref{bound3} and \eqref{boundNN} we obtain \begin{equation*}
\Delta t \; \mathbb{E}|Z_l^{k,N} - \bar{Z}_l^k|^2 \leq C \sum_{j=l}^{N-1} \mathbb{E} |V^j - \widehat{Y}_j|^2 \le C \left( \sum_{j=l}^{N-1}  \varepsilon_j^{\text{NN}, y} + \Delta t \;   \sum_{j=l}^{N-1}  \sum_{m=j}^{N-1} \varepsilon_{j,m}^{\text{NN}, z} \right), 
\end{equation*}
which leads to 
\begin{equation*}
    \begin{aligned}
       \Delta t \;  \sum_{k=0}^{N-1} \sum_{l=k}^{N-1} \mathbb{E}|Z_l^{k,N} - \bar{Z}_l^k|^2 \leq C N^2 \sum_{l=0}^{N-1} \varepsilon_l^{NN,y} + C N  \sum_{k=0}^{N-1} \sum_{l=k}^{N-1} \varepsilon_{k,l}^{\text{NN}, z}.
    \end{aligned}
\end{equation*}
While from \eqref{boundNN}
\[
\Delta t \; \sum_{k=0}^{N-1} \sum_{l=k}^{N-1} \mathbb{E} \left| \bar{Z}_l^k - \widehat{Z}_{k,l} \right|^2
\leq \sum_{k=0}^{N-1} \varepsilon_k^{\text{NN}, y}
+ \Delta t \; \sum_{k=0}^{N-1} \sum_{l=k}^{N-1} \varepsilon_{k,l}^{\text{NN}, z}.
\]
By putting this together, we prove the second part of the bound in \eqref{boundfinal}.
\end{proof}

\section{Applications}
\label{sec:numerics}
In this section, we present three representative applications that demonstrate the flexibility and effectiveness of our DeepBSVIE algorithm. The first example considers a recursive utility problem under ambiguity and memory, where the agent's preferences evolve over time and depend on past consumption. The second example involves a recursive valuation model under exponential growth and discounting, motivated by long term investment decisions in macroeconomic settings. The third example illustrates a nonlinear, time dependent recursive valuation problem with cyclical preferences. These case studies illustrate how BSVIEs can model non-Markovian features and time inconsistency, and how our solver can be effectively applied in such contexts.

\subsection{Recursive utility with memory and ambiguity} \label{sec:example}
In the spirit of Duffie and Epstein's \cite{duffie1992stochastic} continuous-time recursive utility, and following the BSVIE-based recursive utility formulations introduced by Yong \cite{yong2008well}, we consider an agent whose intertemporal preferences are described not by a standard BSDE, but by a BSVIE to account for nonlocal memory effects and ambiguity over long horizons. Specifically, let the agent's continuation utility $Y(t)$, be given by the following linear BSVIE
\[
Y(t) = \phi(t) + \int_t^T \Phi(t, s)\, Y(s)\, ds + \int_t^T  Z(t, s)\,\xi(s)\, ds - \int_t^T Z(t, s)\, dB(s),
\]
where $B(s) \in \mathbb{R}^d$ is a $d$-dimensional Brownian motion, $\phi(t) $ captures a time-dependent anticipatory reward based on the mean of terminal uncertainty, and $\Phi(t,s)$ incorporates fading memory. The integral term $\int_t^T Z(t,s) \xi(s)\, ds$ introduces a \emph{non-uniform ambiguity penalty} on volatility, thereby distorting the aggregation of future uncertainty in a forward-increasing manner. This formulation allows the agent to exhibit \emph{robust preferences} with growing concern about model misspecification at longer horizons, as in multiplier and multiple-priors utility models. The two-parameter nature of the process $Z(t, s) \in \mathbb{R}^{1 \times d}$ further encodes intertemporal sensitivity to risk, thus capturing a richer structure of hedging motives than classical BSDE models. Such a framework can be applied in robust intertemporal consumption models, long-horizon asset pricing, or climate finance, where agents exhibit memory, ambiguity aversion, and horizon-dependent preferences in the valuation of future outcomes. We fix
\[
\phi(t) := \sin(\pi t) \frac{1}{d}\sum_{i=1}^d B^i_T,
\quad
\Phi(t, s) = e^{-(s - t)} \mathbf{1}_{\{s \ge t\}}, \quad \xi(s) = (e^s, \dots, e^s)^\top \in \mathbb{R}^d,
\]
and follow \cite{hu2019linear} to  solve the BSVIE explicitly. 

\begin{remark}
    Notice that the driver of this BSVIE is linear in $Y$ and $Z$, the terminal term $\phi(t)$ is square-integrable, and all coefficients are Lipschitz in the spatial variables and H\"older-continuous in time. Therefore, this example satisfies all the assumptions required for the existence, uniqueness, and convergence of the numerical scheme described in Assumptions \eqref{ass:scheme_bsig} and  \eqref{ass:scheme_gf}.
\end{remark}

Define \(x = s - t\ge 0\), and write \(\Phi(t, s) = \rho(x)\) where:
\[
\rho(x) := e^{-x}, \quad x \ge 0.
\]
We define the resolvent kernel \(\Psi(t, s)\) as:
\[
\Psi(t, s) = \sum_{n=1}^{\infty} \Phi^{(n)}(t, s),
\]
where \(\Phi^{(n)}(t, s)\) is the \(n\)-fold convolution of \(\Phi\) with itself:
\[
\Phi^{(n)}(t, s) = \rho^{*n}(x) = \underbrace{(\rho * \cdots * \rho)}_{n \text{ times}}(x).
\]
It is known from convolution theory that:
\[
\rho^{*n}(x) = \frac{x^{n-1}}{(n-1)!} e^{-x}, \quad x \ge 0.
\]
Hence, we compute the series:
\[
\Psi(x) = \sum_{n=1}^\infty \rho^{*n}(x) = e^{-x} \sum_{n=1}^\infty \frac{x^{n-1}}{(n-1)!}
= e^{-x} \sum_{k=0}^\infty \frac{x^k}{k!} = e^{-x} \cdot e^x = 1.
\]
Therefore, we conclude:
\[
\Psi(t, s) = 1 \quad \text{for all } r \ge t.
\]
Thus, the solution is:
\[
Y(t) = \mathbb{E}^{\mathbb{Q}} \left[ \phi(t) + \int_t^T \phi(s)\, ds \ \middle| \ \mathcal{F}_t \right],
\]
where \(\mathbb{Q}\) is the Girsanov measure associated with \(\xi(s)\):
\[
\frac{d\mathbb{Q}}{d\mathbb{P}} = \exp\Bigg( - \int_0^T \xi(s) dB(s) - \frac12 \int_0^T \|\xi(s)\|^2 \, ds \Bigg), 
\quad dB(s)^{\mathbb{Q}} = dB(s) + \xi(s)\, ds.
\]
Substituting \(\phi(t)\), we have:

\[
Y(t) = \mathbb{E}^{\mathbb{Q}} \Bigg[ \frac{1}{d} \sum_{i=1}^d B(T)^i \, \mathbf{1}_d \left( \sin(\pi t) + \int_t^T \sin(\pi r)\, dr \right) \Big| \mathcal{F}_t \Bigg].
\]
Using the fact that \( \mathbb{E}^{\mathbb{Q}}[B(T) \mid \mathcal{F}_t] = B(t) + \int_t^T \xi(s)\, ds \), the final solution is:
\begin{equation}
Y(t) = \left( \sin(\pi t) + \int_t^T \sin(\pi s)\, ds \right)   \left( \frac{1}{d}\sum_{i=1}^d  B(t)^i + e^T - e^t \right).
\label{eq:solY}
\end{equation}
Moreover, the $i-$th component of process \( Z(t,s) \in \bR^d\) is given by:
\[
Z^i(t,s) = \mathbb{E}^{\mathbb{Q}} \Big[ [D_s \phi(t)]_i + \int_t^T \Phi(t,r)\, D_s Y(r) \, dr \,\Big|\, \mathcal{F}_s \Big].
\]
with 
\[
[D_s \phi(t)]_i = D_s \big[ \sin(\pi t) \cdot B(T)^i \big] = \frac{1}{d}\sin(\pi t),
\]
and \[
D_s Y(r) = \frac{1}{d}\left( \sin(\pi r) + \int_r^T \sin(\pi u)\, du \right) \mathbf{1}_{\{s \le r\}}.
\]
Plugging it into the representation of \( Z^i(t,s) \)
\begin{equation*}
    \begin{aligned}
Z^i(t,s) &= \mathbb{E}^{\mathbb{Q}} \left[ \sin(\pi t) + \int_t^T e^{-(r - t)}\, D_s Y(r)\, dr \,\middle|\, \mathcal{F}_s \right] \\
&= \frac{1}{d} \left( \sin(\pi t) + \int_{s}^T e^{-(r - t)} \left( \sin(\pi r) + \int_r^T \sin(\pi u)\, du \right) dr  \right ) , \qquad i= 0,\ldots,d
    \end{aligned}
\end{equation*}
so that \(Z(t,s) = (Z^1(t,s), \dots, Z^d(t,s)) \in \mathbb{R}^{1 \times d}\).
\subsubsection{Numerical Implementation}
\label{sec:numerical_implementation}

We now present the numerical results obtained by applying the neural network-based algorithm described in Section \ref{sec:algo} to the example in Section \ref{sec:example} with dimension $d=5$, which admits explicit expressions and thus provides a convenient benchmark to evaluate the accuracy of our numerical method. The network was trained on a GPU (AMD Instinct MI250X), and the typical training time for the presented example was approximately $9$ minutes.

Figure \ref{fig:Y_vs_true} compares the learned solution \(\widehat{Y}_i, i=0,\ldots,N\) with the analytical expression \(Y\) derived in Equation \eqref{eq:solY} evaluated at the discrete time-steps $t_i, i = 0, \ldots, N$. Figure \ref{fig:Z_plots} provides three complementary views of first component of the learned kernel \( Z \): the left subplot shows the surface plot of  \( \widehat{Z}^1_{i,j} \) compared to the reference surface \( Z^1(t,s) \) evaluated on the discrete grid points \( \{(t_i, t_j) : 0 \leq i \leq j < N\} \). The central subplot compares the sequences of \( (\widehat{Z}^1_{i,j})_{i=0}^j \) with the corresponding values \( (Z^1(t_i, t_j))_{i=0}^j \) for selected values of \( j \), illustrating how the learned kernel evolves along the first time axis for fixed values of $s$. The right subplot presents the analogous comparison of \( (\widehat{Z}^1_{i,j})_{j=i}^{N-1} \) with \( (Z^1(t_i, t_j))_{j=i}^{N-1} \) for selected values of \( i \), showing the evolution of the kernel across the other time axis. The remaining components of $Z$ display similar qualitative behavior, and their representation is therefore omitted; all components are, however, taken into account in the error plots.
 \\

Figure \ref{fig:loss_subplots} shows the evolution of the training loss across iterations at selected time steps. Due to the localized nature of the algorithm, the loss is computed separately at each iteration based on the current time step. We therefore report representative time steps only, as the loss at other time steps displays qualitatively similar behavior.\\

With the network parameters selected as described in Section \ref{sec:algo}, the resulting approximation errors, measured according to the metrics defined in Equations \eqref{eq:err} and \eqref{eq:rel_err}, are summarized in Table \ref{tab:err}.  To provide further insight into the temporal behavior of the approximation errors, Figure \ref{fig:error_visualization} depicts the evolution of the mean squared errors as functions of time. The left plot shows the mean squared error for the \(Y\)-component, defined as the expected squared difference between the true and estimated \(Y\) averaged over sample paths and plotted over time. The right plots display focus on the full two-dimensional error surface for the matrix valued process \(Z(t_i, t_j)\) over the time grid and all the spatial components, where the domain is restricted to \(j \geq i\) to respect the inherent causality and triangular structure of the time indices.

\begin{table}[h!]
\centering
\begin{tabular}{|c|c|c|}
\hline
 & $L^2$ Error & $L^2$ Relative Error \\
\hline
$Y$ & $2.74 \times 10^{-4}$ & $1.75 \times 10^{-4}$ \\
$Z$ & $3.08 \times 10^{-5}$ & $4.20 \times 10^{-4}$ \\
\hline
\end{tabular}
\caption{$L^2$ Error and Relative Error for $Y$ and $Z$}
\label{tab:err}
\end{table}

\begin{figure}[H]
\centering
\includegraphics[width=0.7\textwidth]{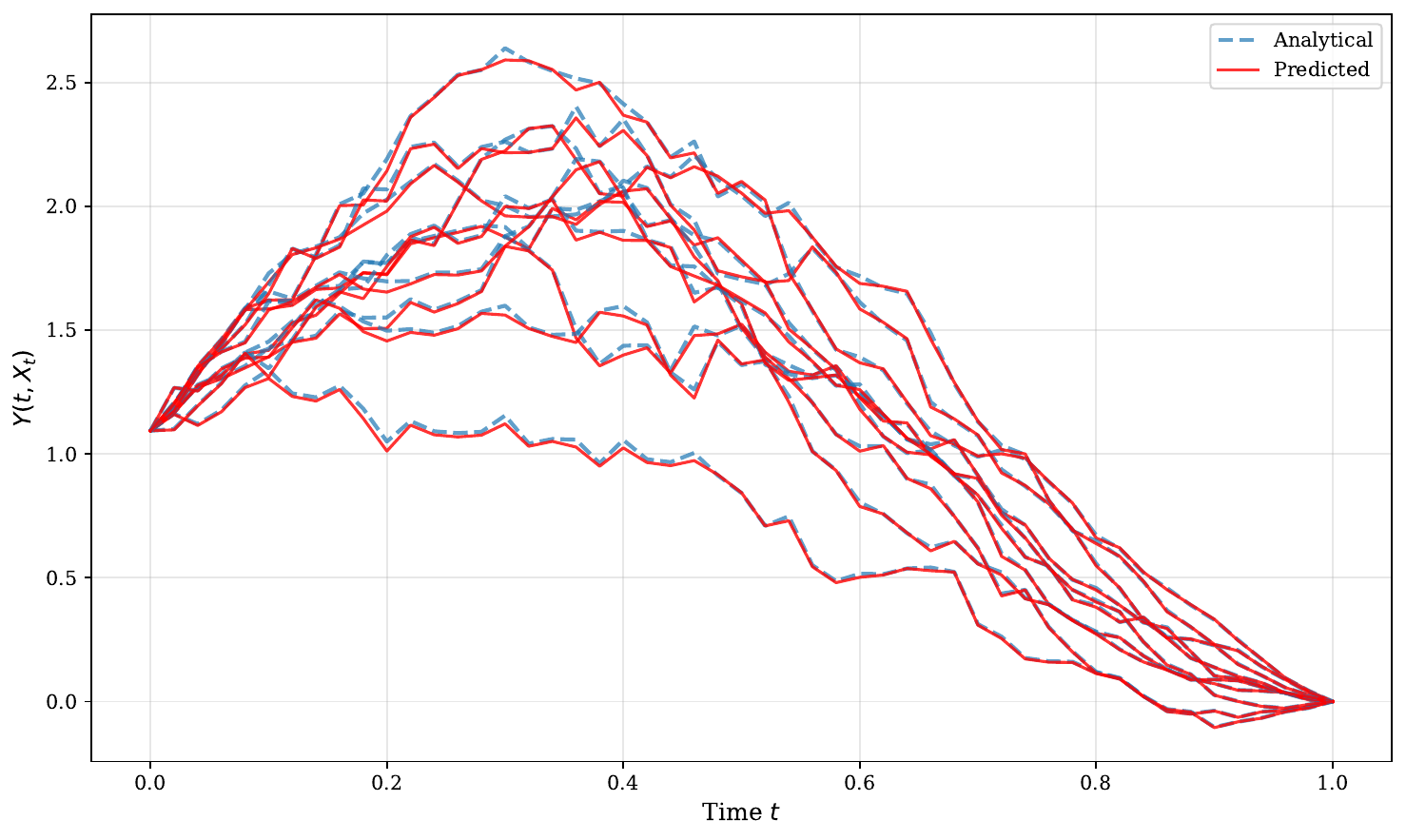}
\caption{Comparison of the learned \(Y(t)\) (red) with the analytical solution (dashed blue).}
\label{fig:Y_vs_true}
\end{figure}

\begin{figure}[htbp]
    \centering
    \begin{subfigure}[t]{0.32\textwidth}
        \includegraphics[width=0.9\textwidth]{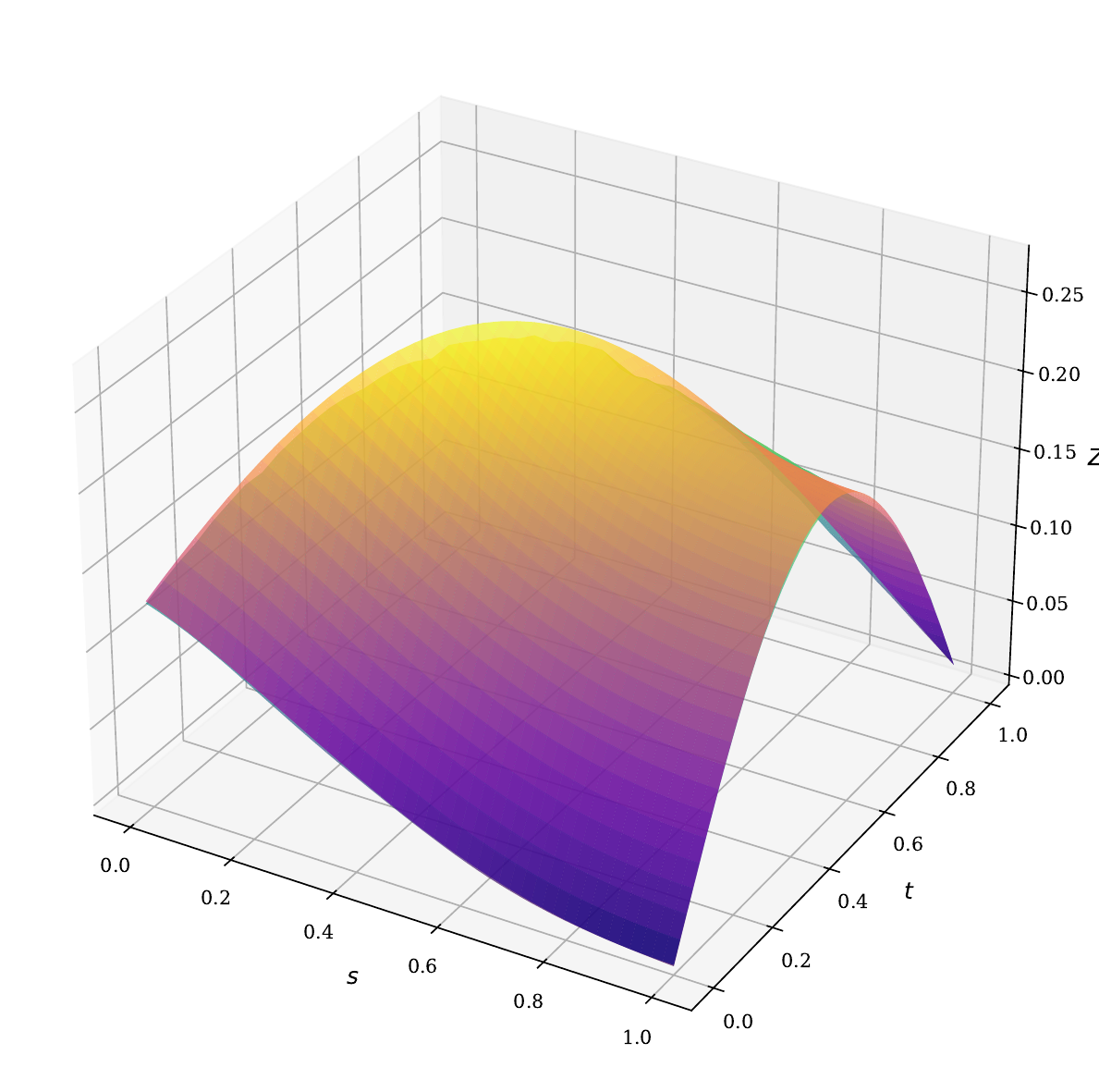}
        \caption{Surface plot of the learned kernel \( Z^1_{i,j} \), overlapped with the reference surface \( Z^1(t,s) \) on the evaluation grid.}
        \label{fig:kernel_surface}
    \end{subfigure}
    \hfill
    \begin{subfigure}[t]{0.32\textwidth}
        \includegraphics[width=\textwidth]{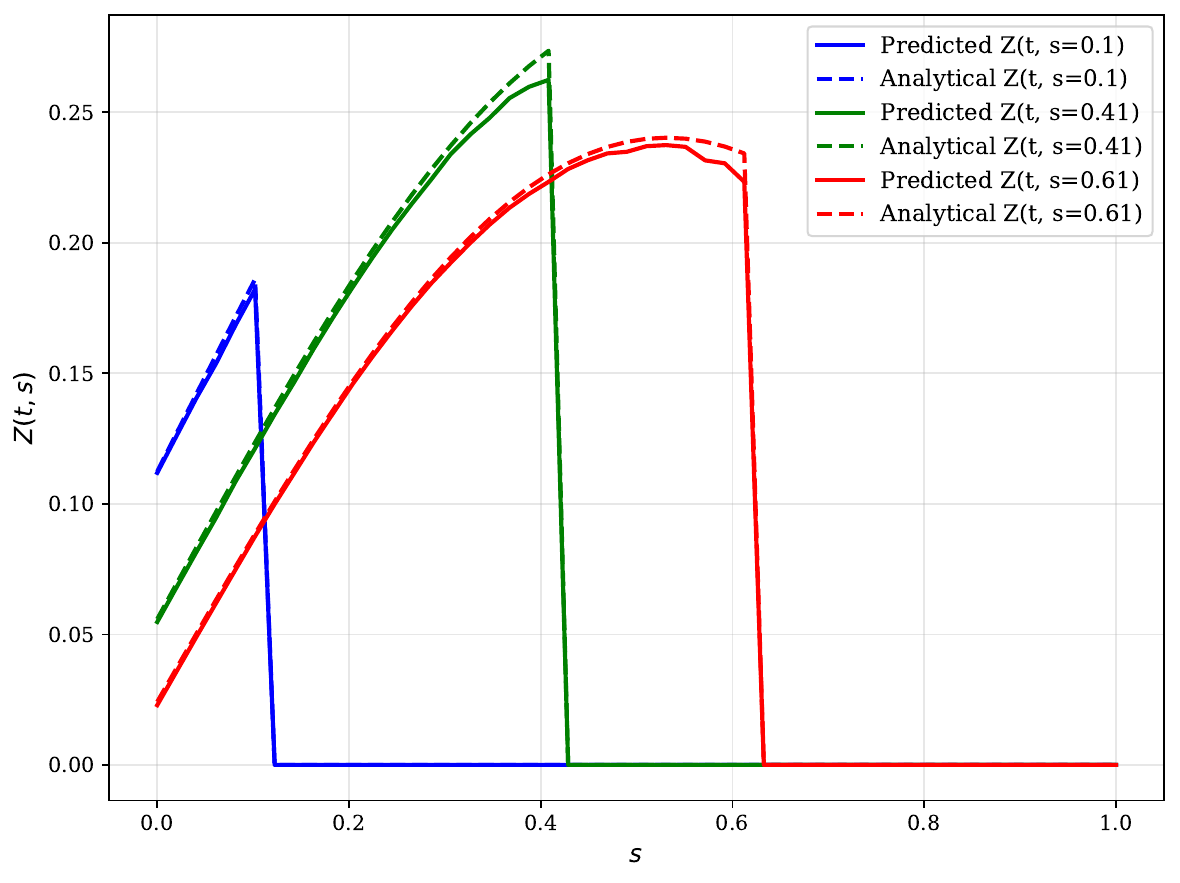}
        \caption{Comparison of $(\widehat{Z}^1_{i,j})_{i=0}^j$  with the corresponding values of $(Z^1(t_i,t_j))_{i=0}^j$ for selected values of $j$.}
        \label{fig:evolution_fixed_s}
    \end{subfigure}
    \hfill
    \begin{subfigure}[t]{0.32\textwidth}
        \includegraphics[width=\textwidth]{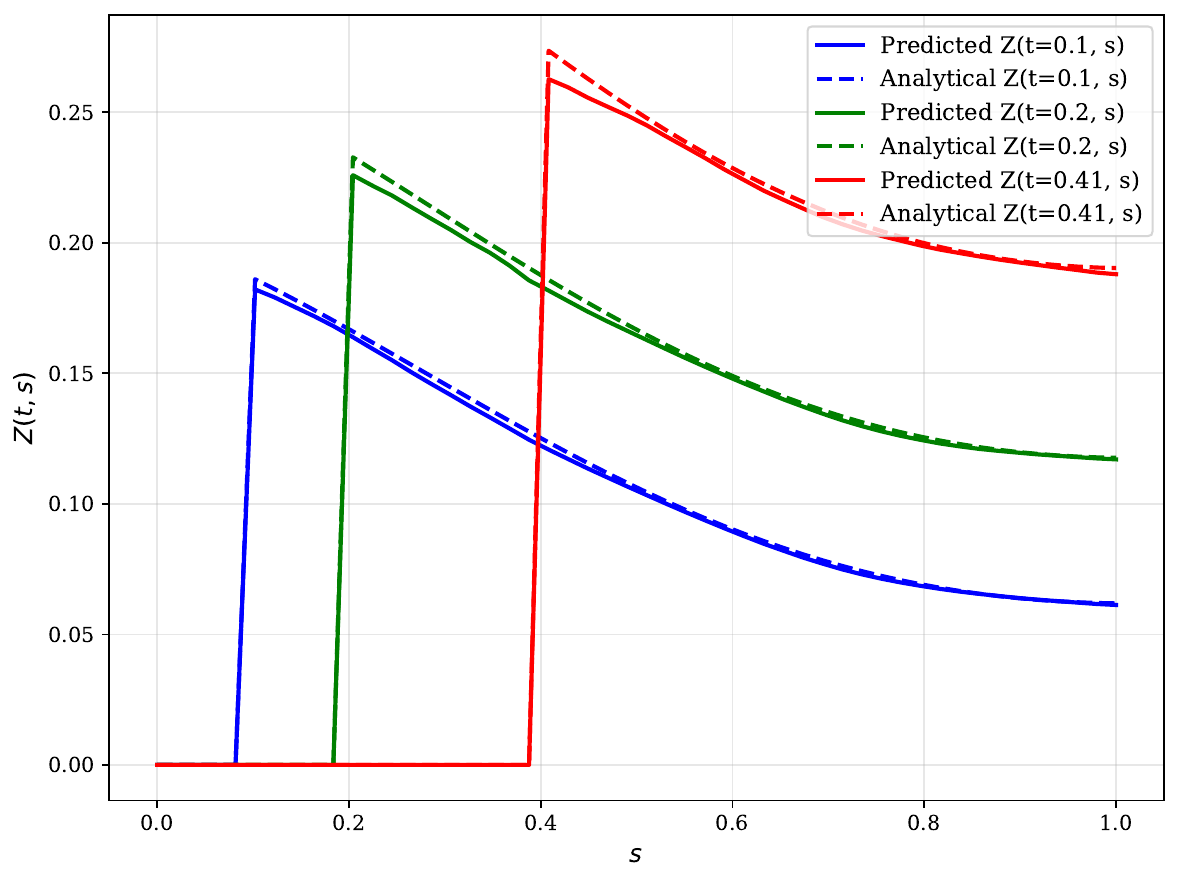}
        \caption{Comparison of $(\widehat{Z}^1_{i,j})_{j=i}^{N-1}$  with the corresponding values of $(Z^1(t_i,t_j))_{j=i}^{N-1}$ for selected values of $i$.}
        \label{fig:evolution_fixed_t}
    \end{subfigure}
    \caption{Visualization of the structure of the first component of the learned kernel \( Z_{i,j} \) and its continuous counterpart \( Z(t,s) \).}
    \label{fig:Z_plots}
\end{figure}

\begin{figure}[H]
    \centering
    \begin{adjustbox}{max width=\textwidth}
    \begin{subfigure}[t]{0.32\textwidth}
        \centering
      \includegraphics[width=\linewidth]{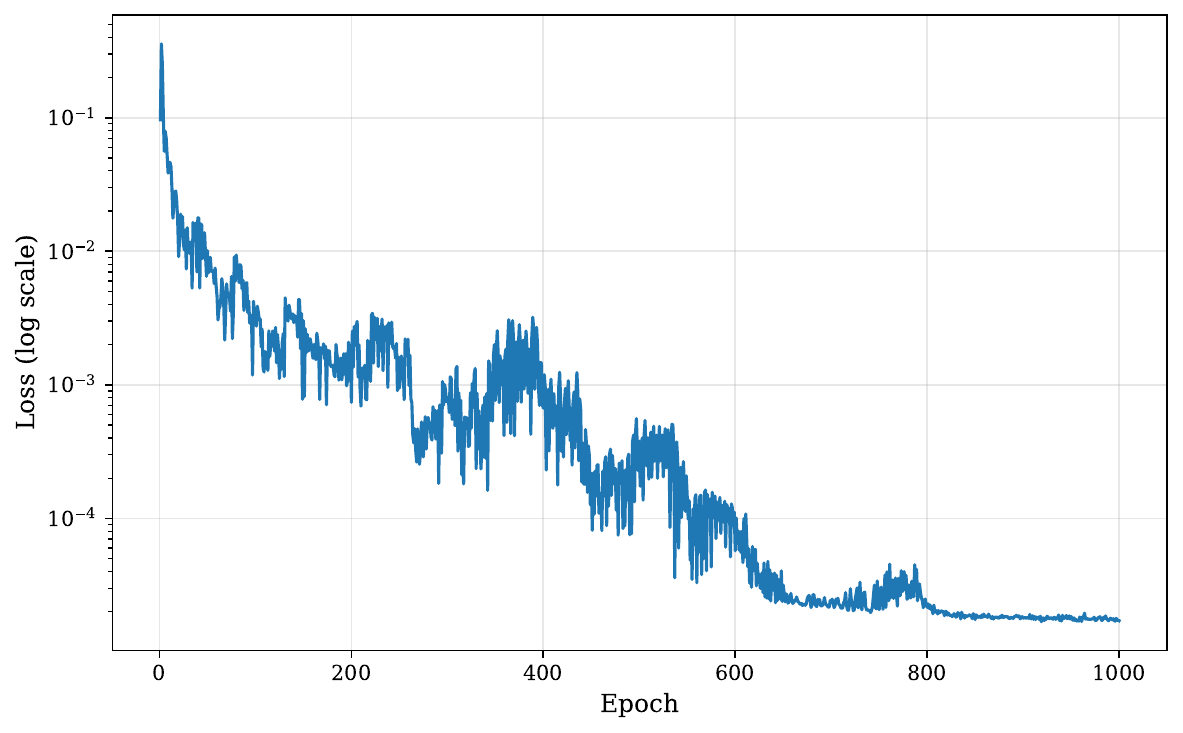}
        \caption{Loss at time step \( 50 \).}
        \label{fig:loss_tNminus1}
    \end{subfigure}
    \hspace{0.01\textwidth}
    \begin{subfigure}[t]{0.32\textwidth}
        \centering
        \includegraphics[width=\linewidth]{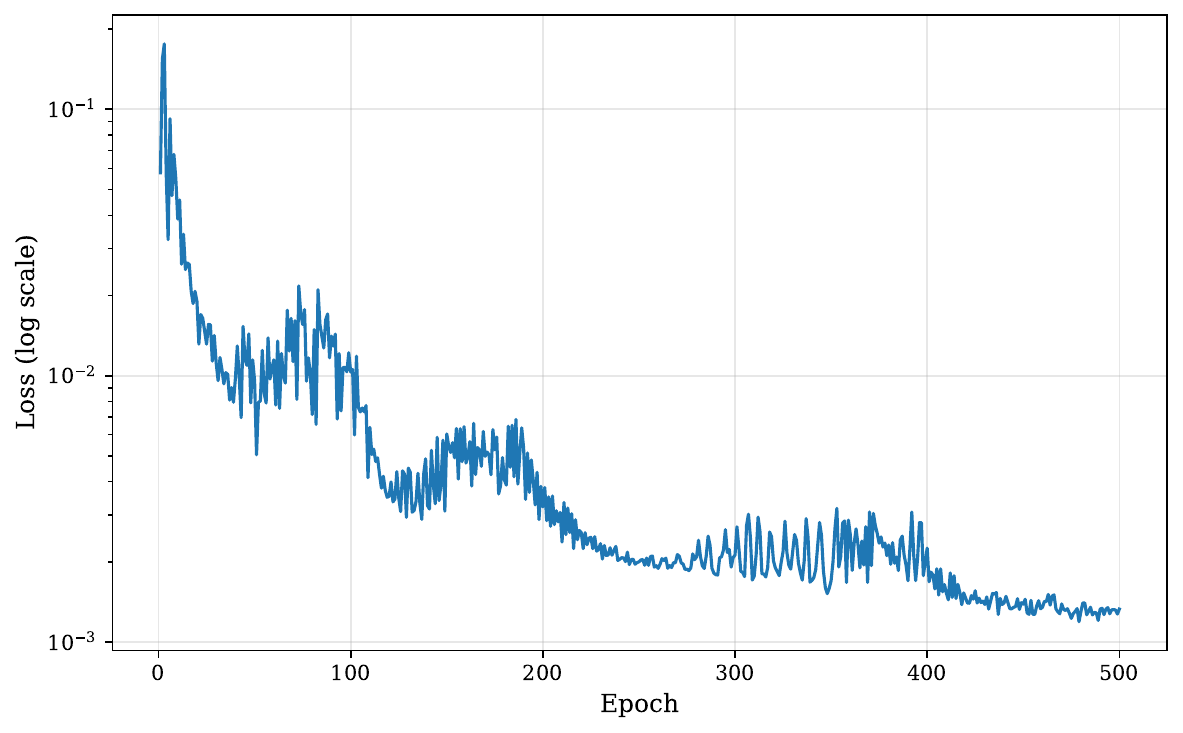}
        \caption{Loss at time step \( 25 \).}
        \label{fig:loss_tN-2}
    \end{subfigure}
    \hspace{0.01\textwidth}
    \begin{subfigure}[t]{0.32\textwidth}
        \centering
        \includegraphics[width=\linewidth]{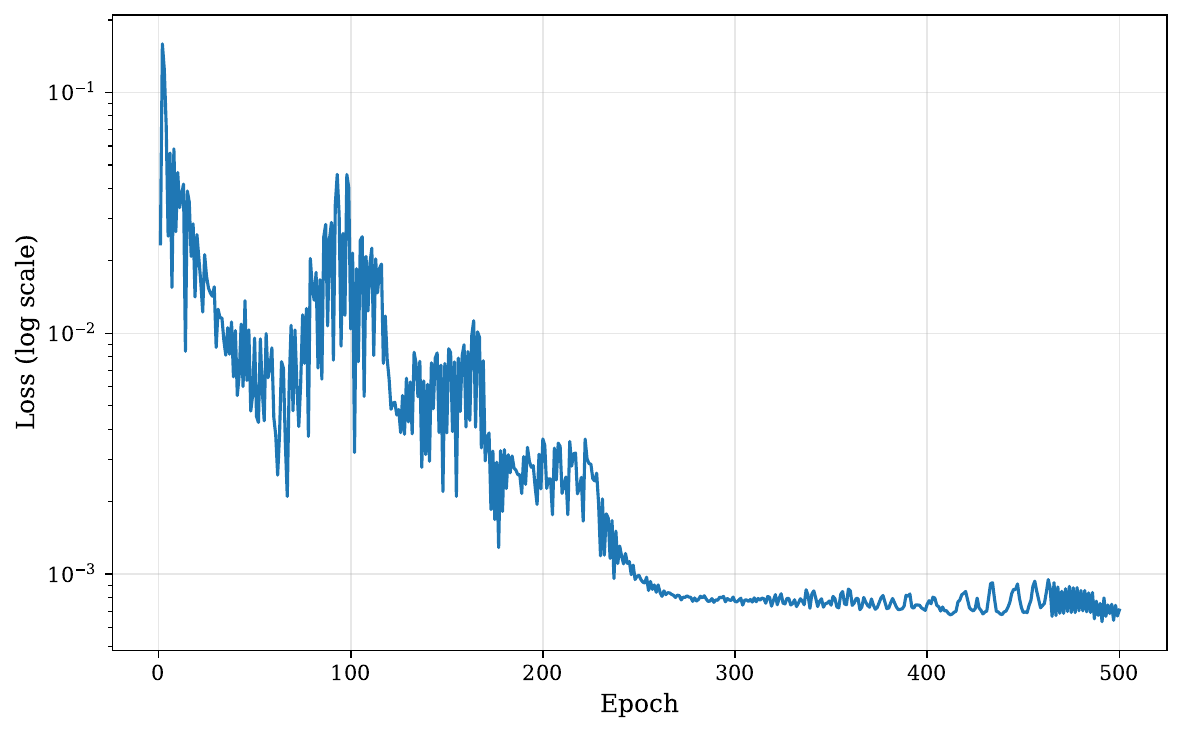}
        \caption{Loss at time step \( 0 \).}
        \label{fig:loss_t0}
    \end{subfigure}
     \end{adjustbox}
    \caption{Algorithm loss across iterations evaluated at selected time steps.}
    \label{fig:loss_subplots}

\end{figure}

\begin{figure}[H]
    \centering
    \begin{subfigure}[t]{0.32\textwidth}
        \centering
        \includegraphics[width=\linewidth]{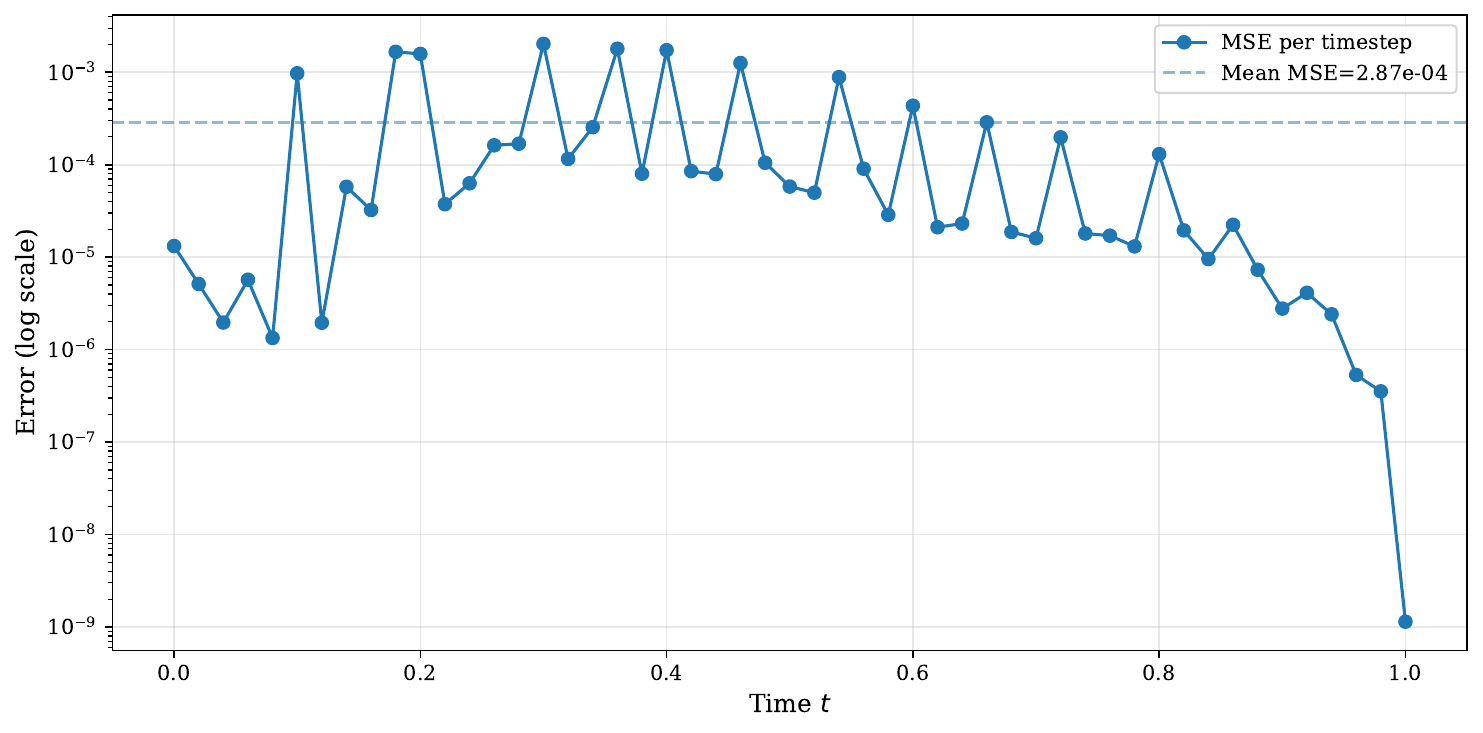}
        \caption{MSE of \( \widehat{Y} \) vs. \( Y \) per timestep.}
        \label{fig:mse_Y}
    \end{subfigure}
    \hfill
    \begin{subfigure}[t]{0.32\textwidth}
        \centering
        \includegraphics[width=0.9\linewidth]{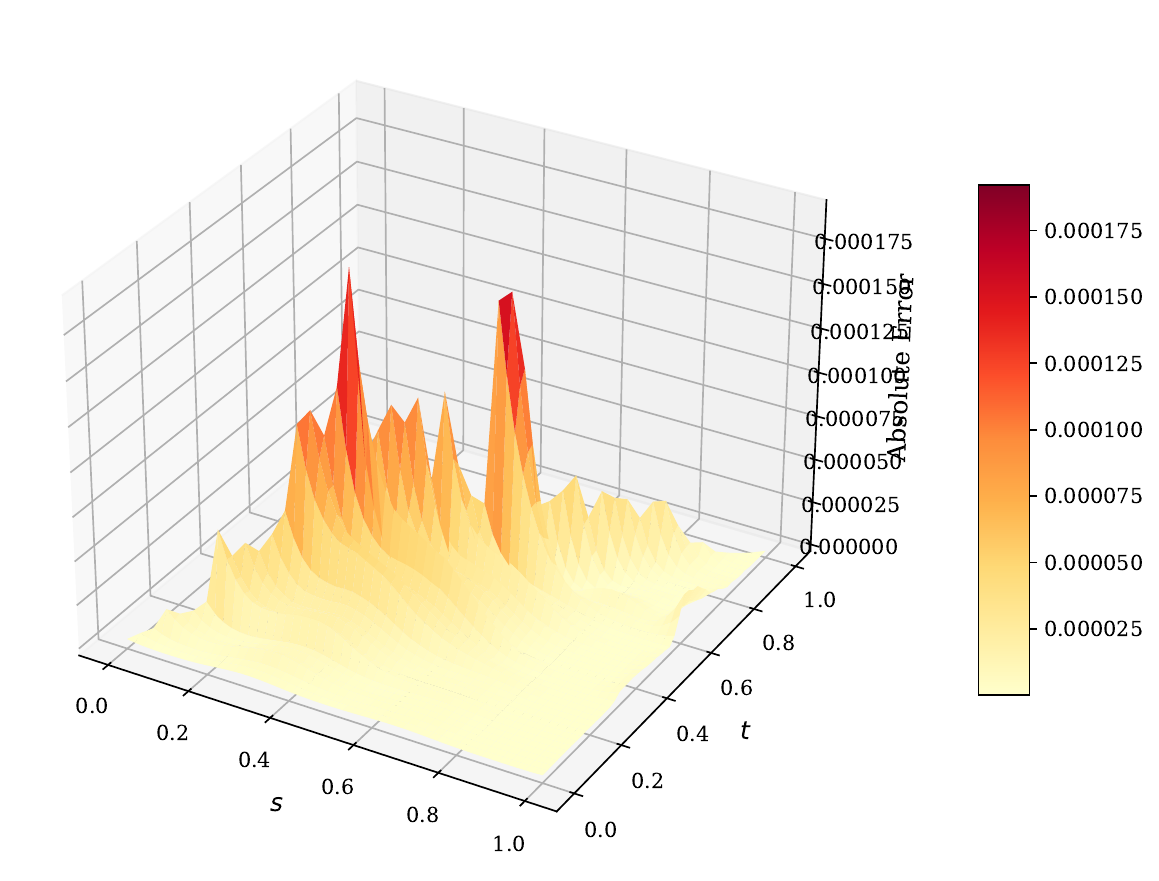}
        \caption{Surface plot of MSE between \( \widehat{Z} \) and \( Z \).}
        \label{fig:mse_Z_surface}
    \end{subfigure}
    \hfill
    \begin{subfigure}[t]{0.32\textwidth}
        \centering
        \includegraphics[width=0.9\linewidth]{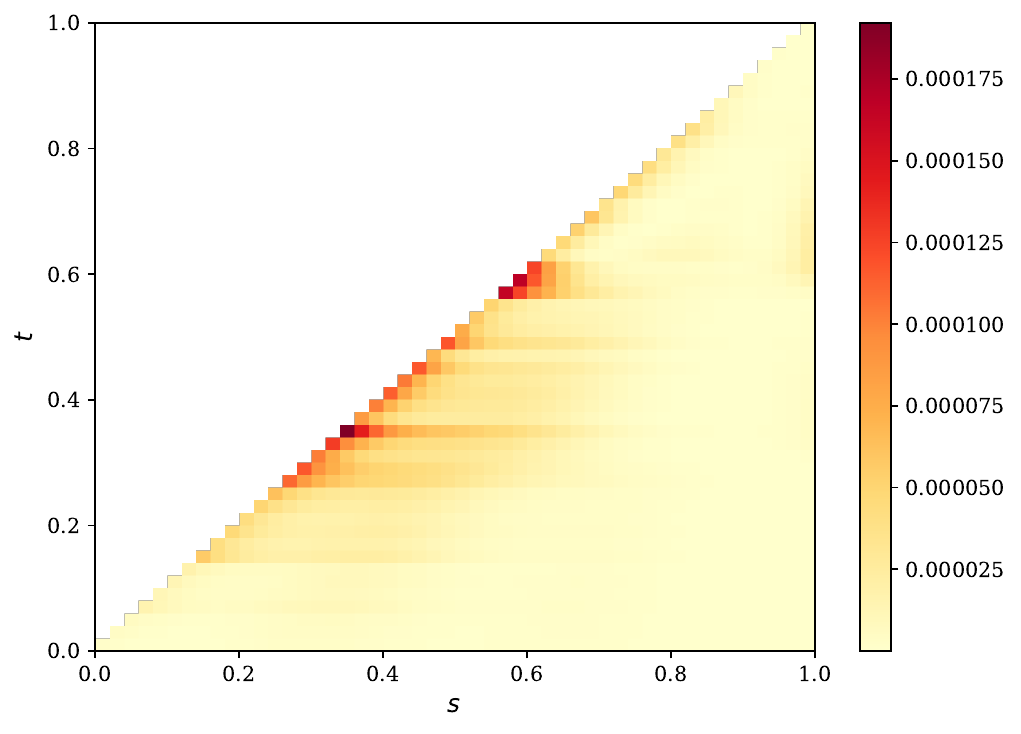}
        \caption{Heatmap of MSE between \( \widehat{Z} \) and \( Z \).}
        \label{fig:mse_Z_heatmap}
    \end{subfigure}
    \caption{Visualization of estimation errors.}
    \label{fig:error_visualization}
\end{figure}

\subsection{Recursive Valuation under Exponential Growth and Discounting}
\label{sec:example2}
In long term financial planning and asset management, agents often face the challenge of evaluating future income streams and terminal rewards under uncertainty. A natural approach is to model their preferences using recursive utility, where today's utility depends on future utility in a dynamically consistent way. When the underlying uncertainty is driven by market exposure, such preferences can be linked to the evolution of risky assets via forward-backward systems.

To account for such dynamics, we consider a BSVIE, where the forward component models exponential growth and the backward component captures discounted future rewards and their sensitivity to risk.

Let the forward state variable represent the evolution of $d$ risky assets modeled as geometric Brownian motions:
\begin{equation*} 
X(t) = \exp\Big( (\mu - \tfrac{1}{2} \operatorname{diag}(\sigma \sigma^\top)) t + \sigma B(t) \Big) \in \mathbb{R}^d,
\end{equation*}
where $\mu \in \mathbb{R}^d$ is the drift vector, $\sigma \in \mathbb{R}^{d \times d}$ is the diagonal volatility matrix, and $(B(t))_{t \ge 0} \in \mathbb{R}^d$ is a standard $d$-dimensional Brownian motion.

We now consider the following BSVIE for the scalar continuation utility $Y(t)$:
\begin{equation*}
Y(t) = e^{-\lambda t} \frac{1}{d} \sum_{i=1}^d X^i(t) + \int_t^T \lambda_0 \frac{1}{d} \sum_{i=1}^d X^i(s)\, ds - \int_t^T Z(t,s) dB(s),
\end{equation*}
where $\lambda \ge 0$ is a discount rate, $\lambda_0 \in \mathbb{R}$ governs the weight of intermediate income flows, and $Z(t,s) \in \mathbb{R}^{1 \times d}$ encodes the sensitivity of $Y(t)$ to each component of $B(s)$.

\begin{remark}
    Notice that in this example, the driver of the BSVIE is linear in the state and independent of the solution, and therefore trivially satisfies the Lipschitz condition in these variables. The terminal term is square-integrable, and both the driver and terminal term satisfy the H\"older continuity in time. Moreover, the forward process exhibits linear growth, and all coefficients are measurable. Therefore, this example satisfies all the assumptions required for the existence, uniqueness, and convergence of the numerical scheme presented  in Assumptions  \eqref{ass:scheme_bsig} and  \eqref{ass:scheme_gf}.
\end{remark}

This BSVIE reflects the recursive valuation of future utility from a vector of risky assets: the agent receives a discounted terminal reward given by the mean of all assets, accumulates intermediate benefits proportional to the mean of the state vector $X(s)$, and dynamically hedges future uncertainty through the vector-valued process $Z(t,s)$. The two-parameter structure of $Z(t,s)$ captures the marginal sensitivity of the utility at time $t$ with respect to randomness revealed in each asset at future times $s \ge t$, a feature not captured by standard BSDEs.

Such BSVIEs extend the Duffie-Epstein recursive utility framework by incorporating memory and horizon-dependent discounting. They are particularly relevant in long-term investment and consumption models where agents exhibit forward-looking behavior with intertemporal hedging motives.\\
Using the Markov property of the multidimensional state \(X(t) \in \mathbb{R}^d\), we compute the explicit representation of \(Y(t)\) as:
\begin{equation*} 
Y(t) = \mathbb{E} \left[ \frac{1}{d} \sum_{i=1}^d e^{-\lambda t} X^i(t) + \int_t^T \lambda_0 \frac{1}{d} \sum_{i=1}^d X^i(s) \, ds \,\Big|\, \mathcal{F}_t \right].
\end{equation*}

Since \(\mathbb{E}[X^i(s) \mid \mathcal{F}_t] = X^i(t) e^{\mu_i (s-t)}\) for each \(i = 1, \dots, d\), we obtain the closed-form expression:
\begin{equation} \label{eq:Yexplicit}
\begin{aligned}
Y(t) &= \frac{1}{d} \sum_{i=1}^d X^i(t) \left( e^{-\lambda t} e^{\mu_i (T-t)} + \lambda_0 \frac{e^{\mu_i (T-t)} - 1}{\mu_i} \right)
\end{aligned}
\end{equation}
To determine the stochastic integrand \( Z(t,s) \), we apply the Malliavin representation:
\[
Z(t,s) = \mathbb{E} \left[ D_s Y(t) \,\middle|\, \mathcal{F}_s \right], \quad t \le s \le T.
\]
We compute the Malliavin derivatives for each component \(i\) are:
\[
D_s X^i(t) = (\sigma X(T))_i \mathbf{1}_{\{s \le T\}}, \quad
D_s X_u^i = (\sigma X_u)_i \mathbf{1}_{\{s \le u\}}, \quad u \in [t,T].
\]
which yield:
\begin{equation*}
\begin{aligned}
    Z^i(t,s) =&  \frac{\sigma_i}{d}\mathbb{E} \left[   e^{-\lambda t} X^i(t) + \int_s^T \lambda_0 X_u^i \, du  \,\Big|\, \mathcal{F}_s \right]\\
=& \frac{\sigma_i}{d} \, X^i(s) \left( e^{-\lambda t} e^{\mu_i (T-s)} + \lambda_0 \frac{e^{\mu_i (T-s)} - 1}{\mu_i} \right),
\end{aligned}
\end{equation*}
so that \(Z(t,s) = (Z^1(t,s), \dots, Z^d(t,s)) \in \mathbb{R}^{1 \times d}\).

This expression captures the marginal impact of noise at time \( s \) on the utility \( Y(t) \), consistent with the anticipative nature of BSVIEs.

\subsubsection{Numerical Implementation} 

We now present the numerical results obtained by applying the neural network-based algorithm described in Section \ref{sec:numerical_implementation} to the example introduced in Section \eqref{sec:example2}. For the numerical simulations, we consider a time horizon \( T = 1 \), initial condition \( X_0 = 1.0 \), $\lambda = \lambda_0 = \frac{1}{2}$.
The drift coefficient is specified by
\[
\mu_i = \mu_{\text{base}} \left( 1 + 0.3  \frac{i-1}{d-1} \right), \quad i = 1, \dots, d,
\]
and the diagonal entries of the volatility matrix are given by
\[
\sigma_i = \sigma_{\text{base}} \left( 1 + 0.2  \frac{i-1}{d-1} \right), \quad i = 1, \dots, d.
\]
This defines a deterministic linear variation of drift and volatility across dimensions. In what follows, we fix $d=5$ and therefore consider
\(\mu = (0.07, 0.085, 0.1, 0.115, 0.13)\) and
\(\sigma = \text{diag}(0.4, 0.45, 0.5, 0.55, 0.6)\).

To approximate the solution of the BSVIE, we employ the neural network-based method trained using the same architecture and hyperparameters described in Section \ref{sec:algo}, where we detailed the general learning setup for BSVIEs. The training follows Algorithm \ref{algo:BSVIE}.
The forward geometric Brownian motion is generated using the Euler-Maruyama scheme. The backward processes \( Y(t) \), \( Z(t,s) \), are learned simultaneously through the neural representation, trained over multiple trajectories to ensure statistical robustness. The network was trained on a GPU (AMD Instinct MI250X), and the typical training time for the presented example was approximately $8$ minutes and $48$ seconds.\\

Below, we report the key outcomes of the simulation. Figure \ref{fig:Y2} compares the  learned solution \(\widehat{Y}_i, i = 0, \ldots, N\) with the analytical expression of \(Y\) derived in Equation \eqref{eq:Yexplicit}, evaluated at discrete time steps \(t_i\). Figure \ref{fig:surf2} shows a comparison between the first component of the learned kernel \(\widehat{Z}(t_i, t_j)\) and the reference solution \(Z(t_i, t_j)\), both evaluated over the domain \(0 \leq i \leq j < N\). Figure \ref{fig:fixedk} further compares the first component of the sequences \((\widehat{Z}_{i,j})_{i=0}^j\) with their reference counterparts \((Z(t_i, t_j))_{i=0}^j\),  illustrating how the learned kernel varies along the first time axis for fixed values of $s=t_j$.  Similarly, \ref{fig:fixedn} shows a comparison between the first component of \( (\widehat{Z}_{i,j})_{j=i}^{N-1}\) and \( (Z(t_i,t_j))_{j=i}^{N-1}\) for selected values of $i$, showing slices of the both kernels for fixed values of $t = t_i$.  \\

Figure \ref{fig:loss_subplots2} illustrates the evolution of the training loss across epochs for selected time steps. Table \ref{tab:err2} reports the approximation errors, evaluated according to the metrics defined in Equations \eqref{eq:err} and \eqref{eq:rel_err}. To complement these results, Figure \ref{fig:error_visualization2} displays the temporal evolution of the mean squared errors for both the \( Y \) and \( Z \) components. Notice that the slightly higher MSE near $t=1$ arises from several factors. First, compared to Example~1, the terminal payoff is more complex and therefore harder to learn (e.g., compare Figure \ref{fig:mseY2} with the corresponding error pattern in Figure \ref{fig:mse_Y}). Moreover, due to the backward nature of the scheme, the final time steps are inherently the most challenging to learn, as there is no warm-start initialization at the terminal network. Nevertheless, the MSE remains on the order of $10^{-4}$ and does not accumulate going backward in time, confirming the stability of the training procedure.

\begin{table}[h!]
\centering
\begin{tabular}{|c|c|c|}
\hline
 & $L^2$ Error & $L^2$ Relative Error \\
\hline
$Y$ & $2.31 \times 10^{-5}$ & $2.21 \times 10^{-5}$ \\
$Z$ & $6.25 \times 10^{-5}$ & $3.12 \times 10^{-4}$ \\
\hline
\end{tabular}
\caption{$L^2$ Error and Relative Error for $Y$ and $Z$}
\label{tab:err2}
\end{table}

\begin{figure}[H]
\centering
\includegraphics[width=0.7\textwidth]{ 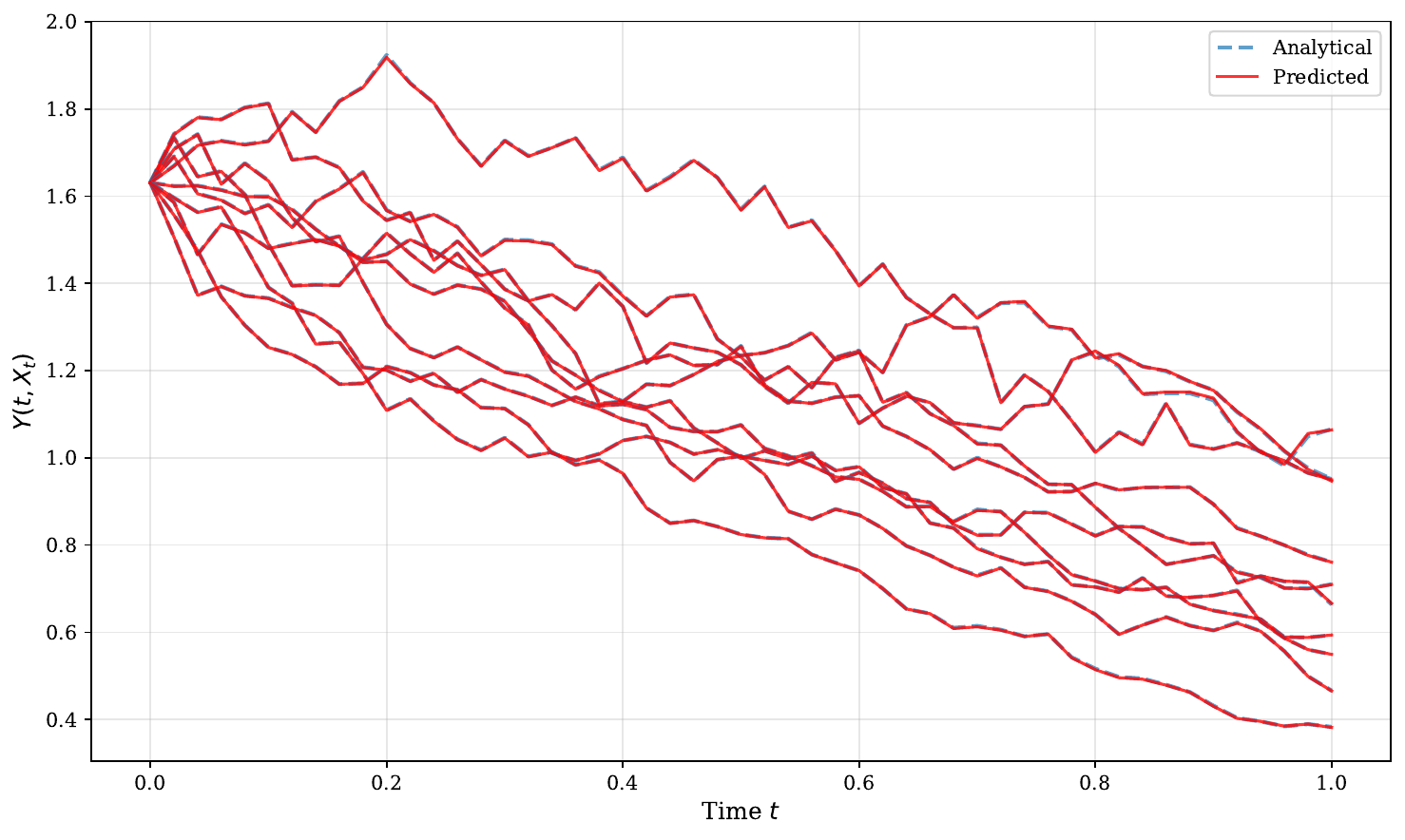}
\caption{Comparison of the learned \(Y(t)\) (red) with the analytical solution (dashed blue).}
\label{fig:Y2}
\end{figure}

\begin{figure}[H]
    \centering
    \begin{subfigure}[t]{0.32\textwidth}
        \includegraphics[width=\textwidth]{ 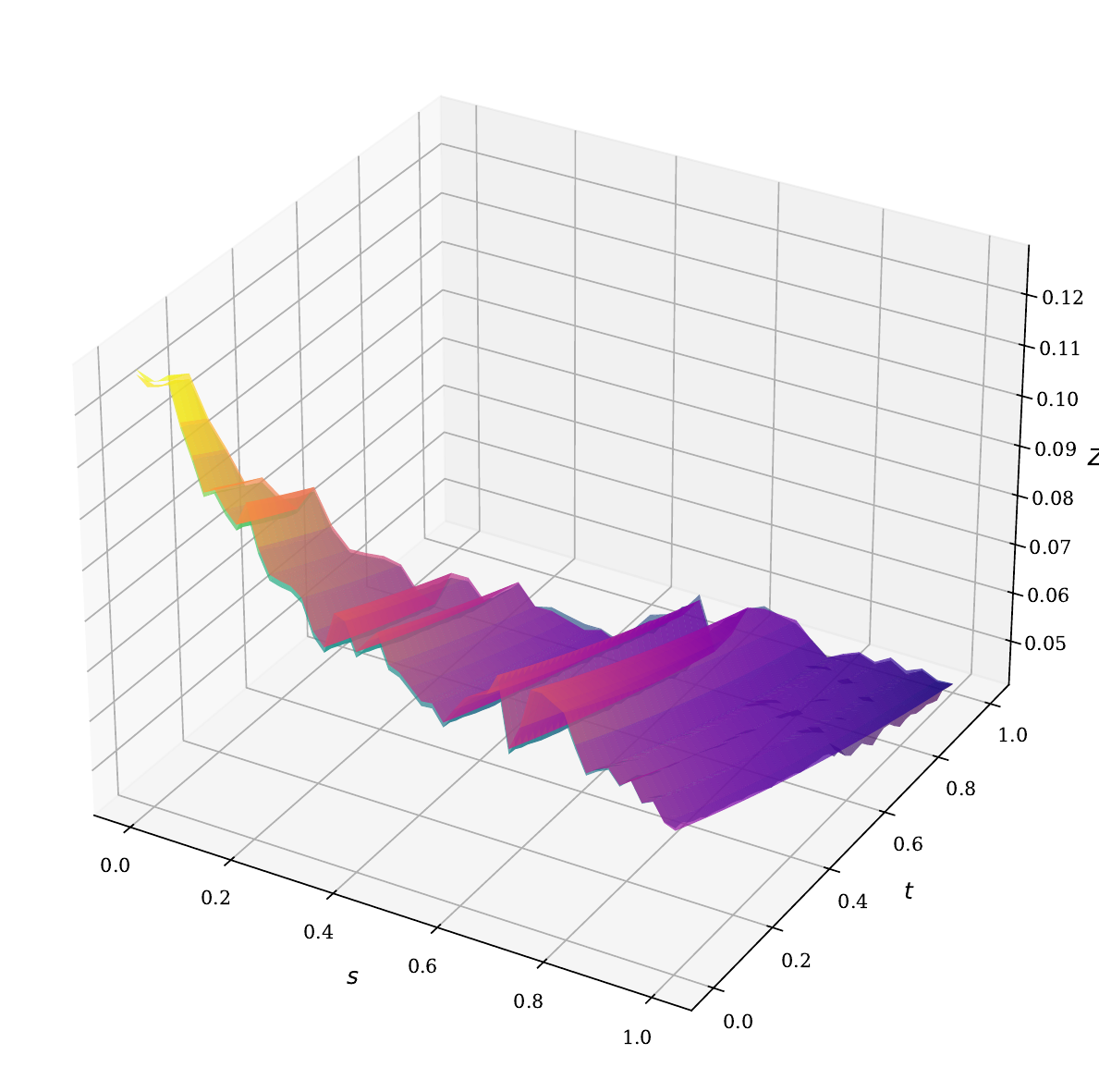}
        \caption{Surface plot of the learned kernel \( Z^1_{i,j} \), overlapped with the reference surface \( Z^1(t,s) \) on the evaluation grid.}
        \label{fig:surf2}
    \end{subfigure}
    \hfill
    \begin{subfigure}[t]{0.32\textwidth}
        \includegraphics[width=\textwidth]{ 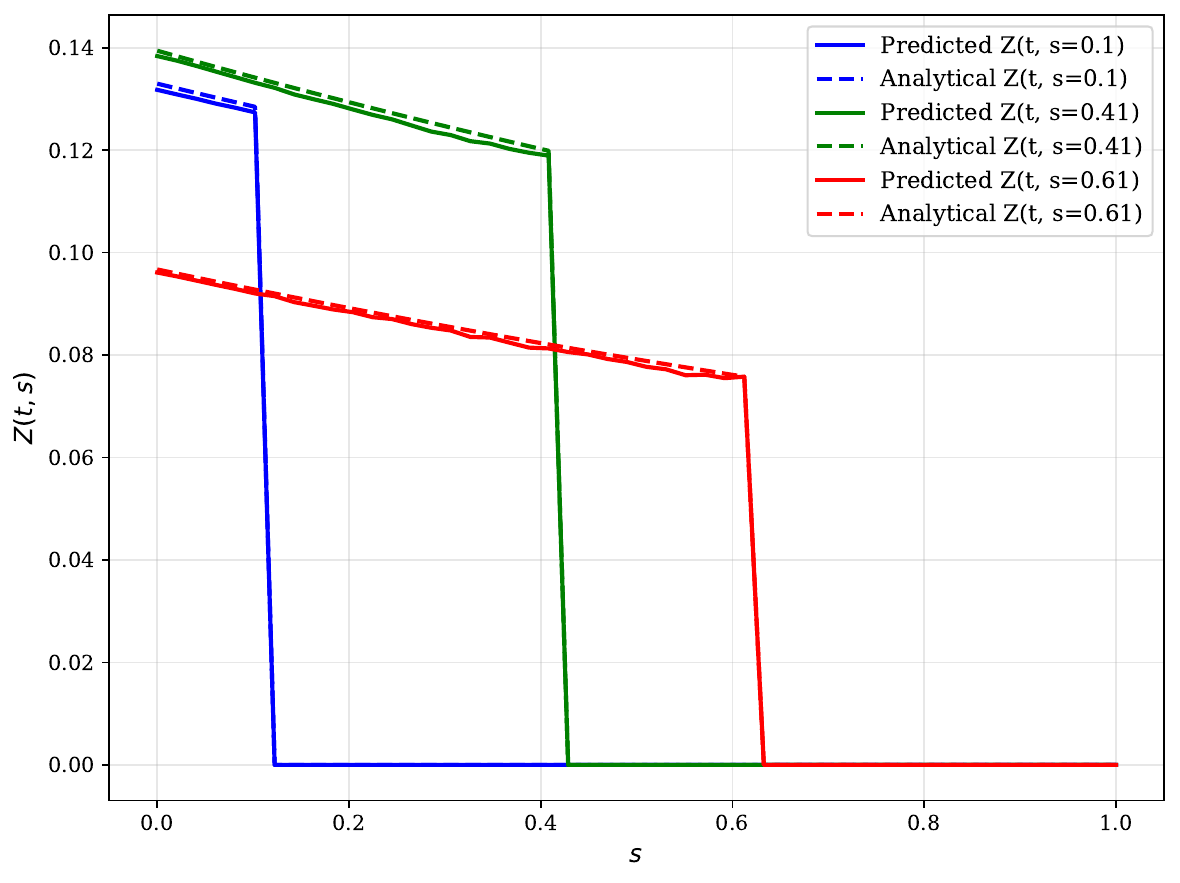}
        \caption{Comparison of $(\widehat{Z}^1_{i,j})_{i=0}^j$  with the corresponding values of $(Z^1(t_i,t_j))_{i=0}^j$ for selected values of $j$.}
        \label{fig:fixedk}
    \end{subfigure}
    \hfill
    \begin{subfigure}[t]{0.32\textwidth}
        \includegraphics[width=\textwidth]{ 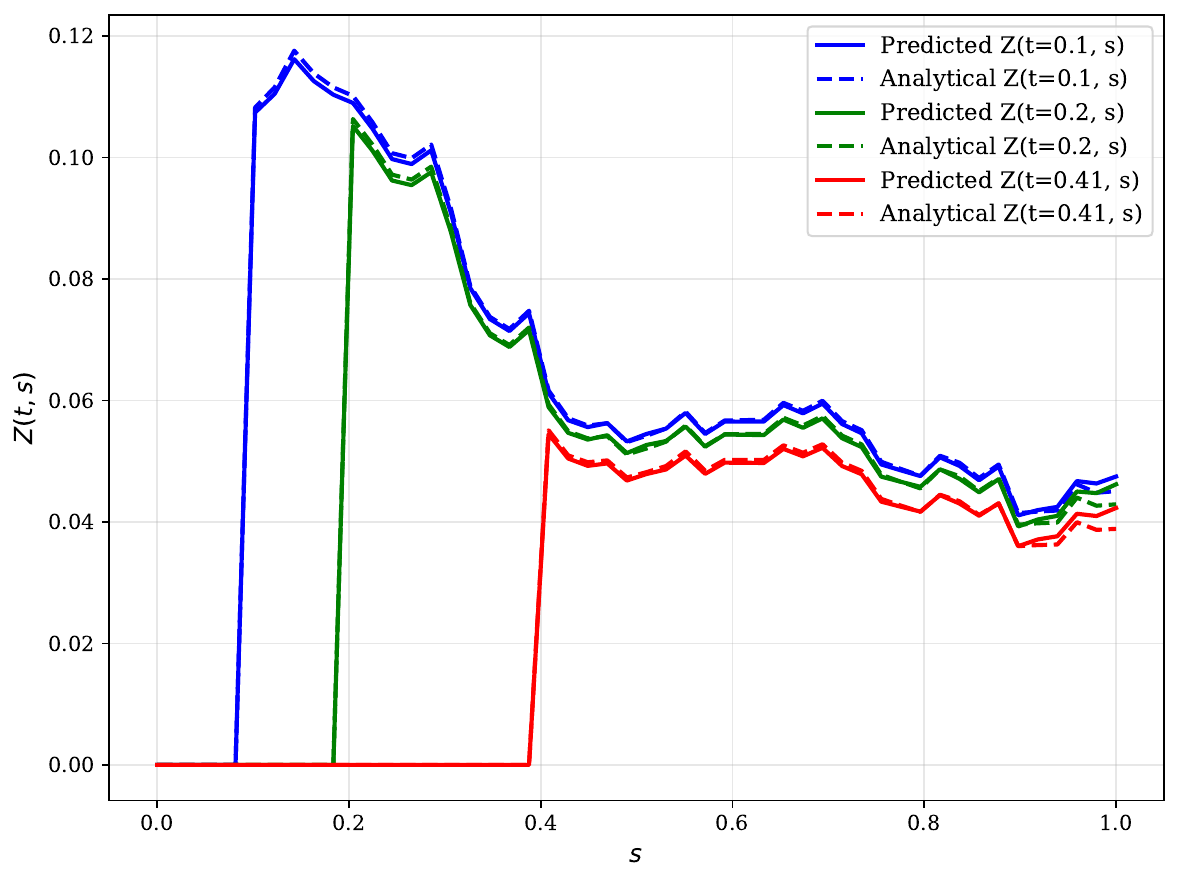}
        \caption{Comparison of $(\widehat{Z}^1_{i,j})_{j=i}^{N-1}$  with the corresponding values of $(Z(^1t_i,t_j))_{j=i}^{N-1}$ for selected values of $i$.}
        \label{fig:fixedn}
    \end{subfigure}
    \caption{Visualization of the structure of the first component of the learned kernel \( Z_{i,j} \) and its continuous counterpart \( Z(t,s) \).}
    \label{fig:Z_plots2}
\end{figure}

\begin{figure}[H]
    \centering
    \begin{adjustbox}{max width=\textwidth}
    \begin{subfigure}[t]{0.32\textwidth}
        \centering
      \includegraphics[width=\linewidth]{ 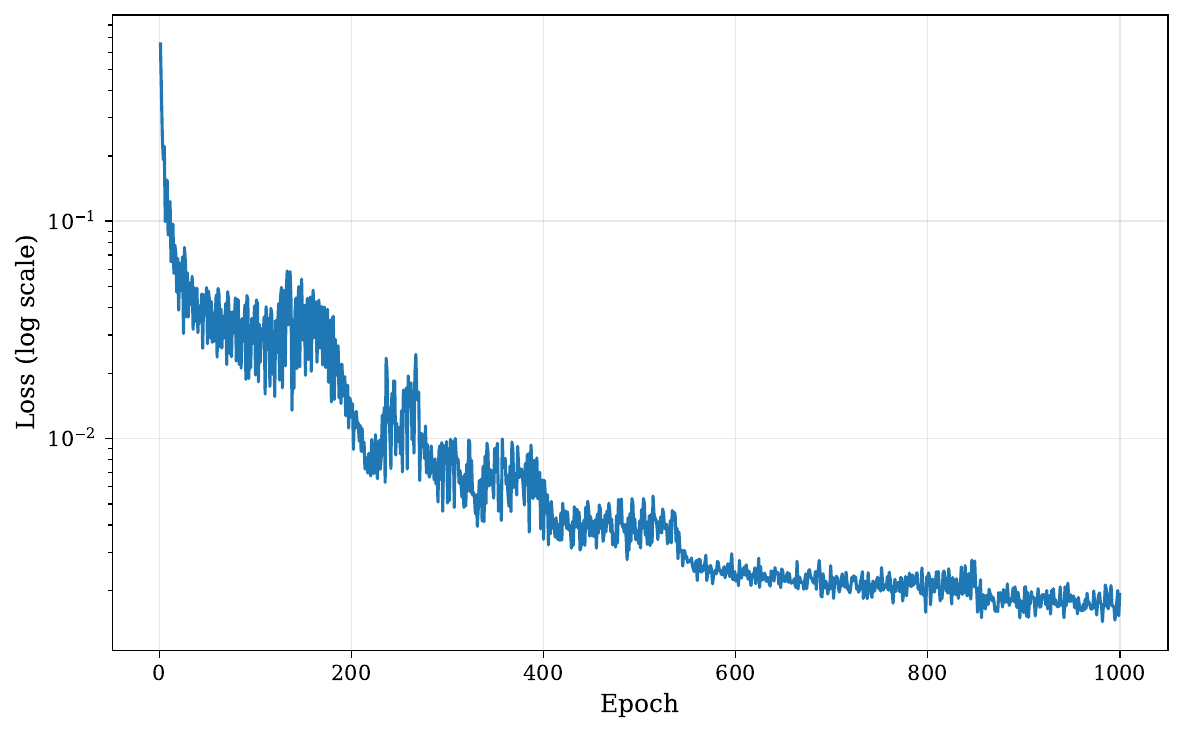}
        \caption{Loss at time step \( 50 \).}
    \end{subfigure}
    \hspace{0.01\textwidth}
    \begin{subfigure}[t]{0.32\textwidth}
        \centering
        \includegraphics[width=\linewidth]{ 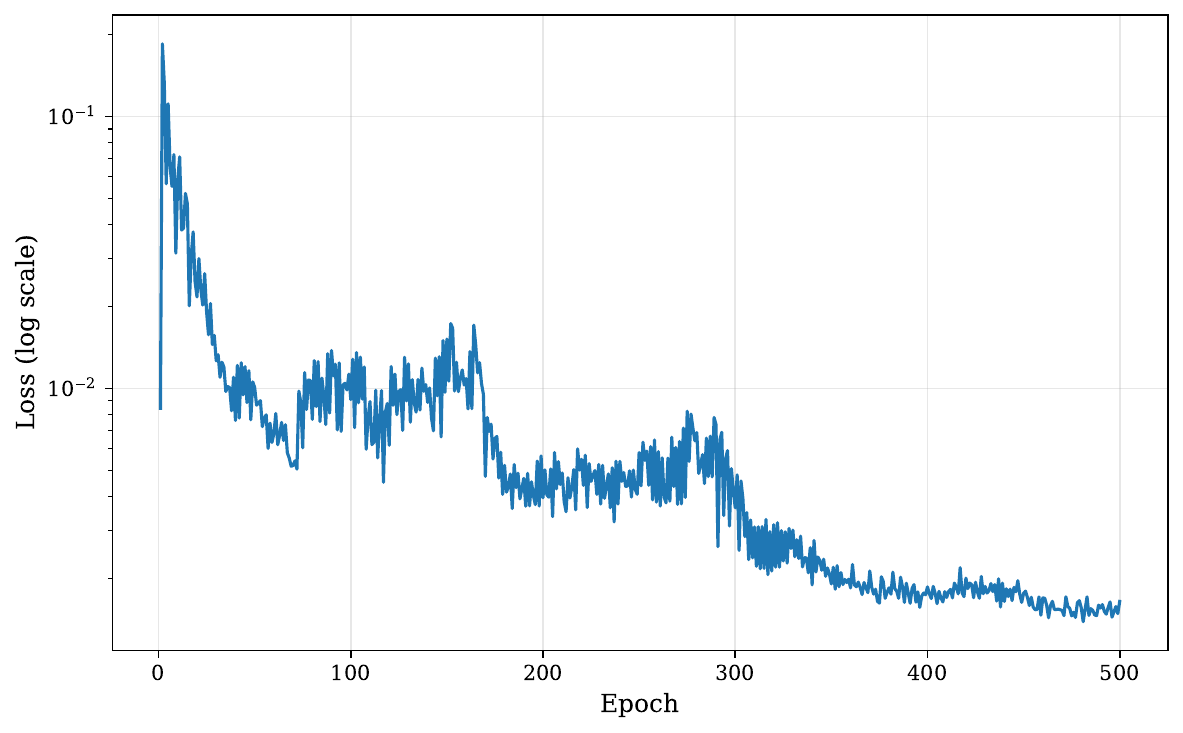}
        \caption{Loss at time step \( 25 \).}
    \end{subfigure}
    \hspace{0.01\textwidth}
    \begin{subfigure}[t]{0.32\textwidth}
        \centering
        \includegraphics[width=\linewidth]{ 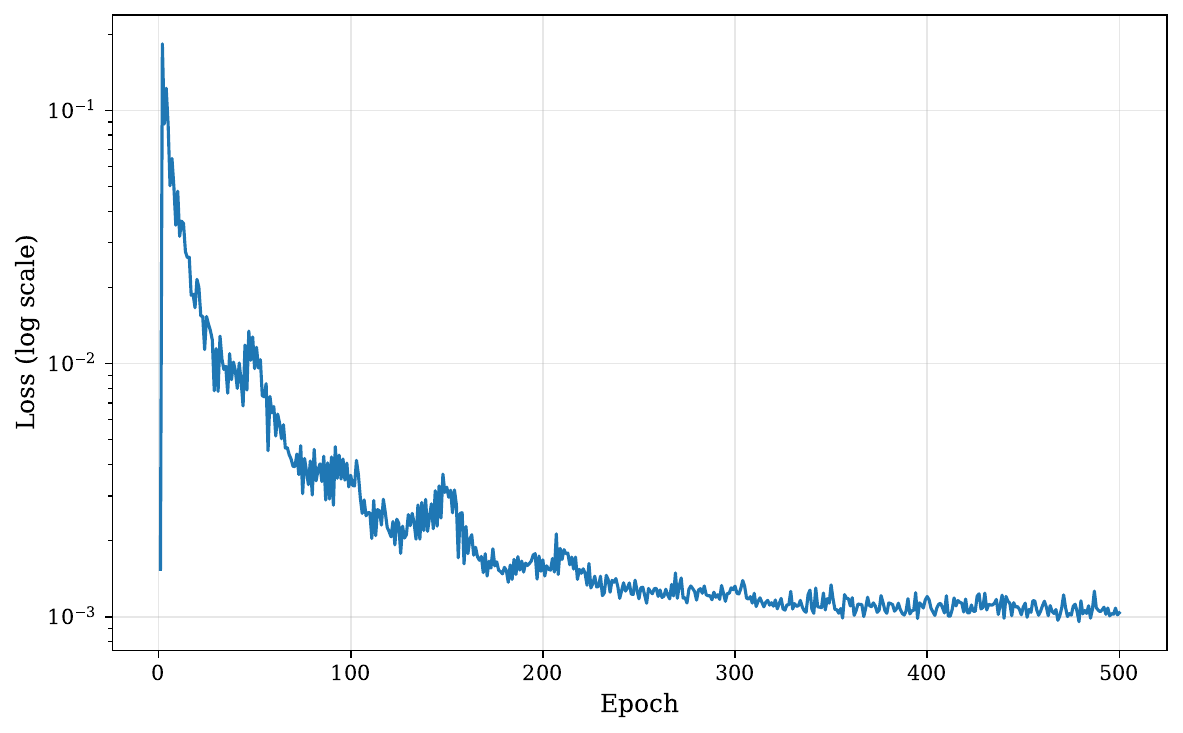}
        \caption{Loss at time step \( 0 \).}
    \end{subfigure}
     \end{adjustbox}
    \caption{Algorithm loss across iterations evaluated at selected time steps.}
    \label{fig:loss_subplots2}

\end{figure}

\begin{figure}[H]
    \centering
    \begin{subfigure}[t]{0.32\textwidth}
        \centering
        \includegraphics[width=\linewidth]{ 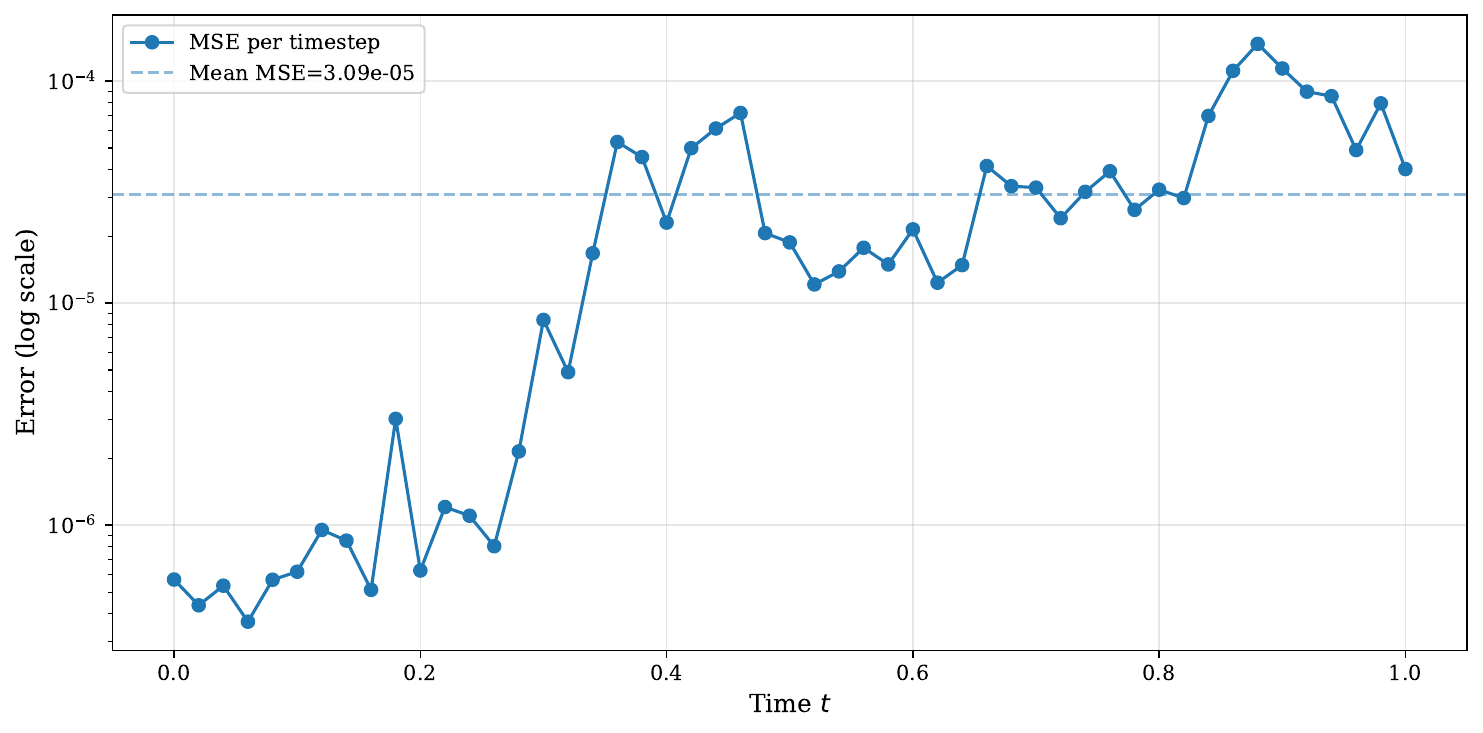}
        \caption{MSE of \( \widehat{Y} \) vs. \( Y \) over time.}
        \label{fig:mseY2}
    \end{subfigure}
    \hfill
    \begin{subfigure}[t]{0.32\textwidth}
        \centering
        \includegraphics[width=0.9\linewidth]{ 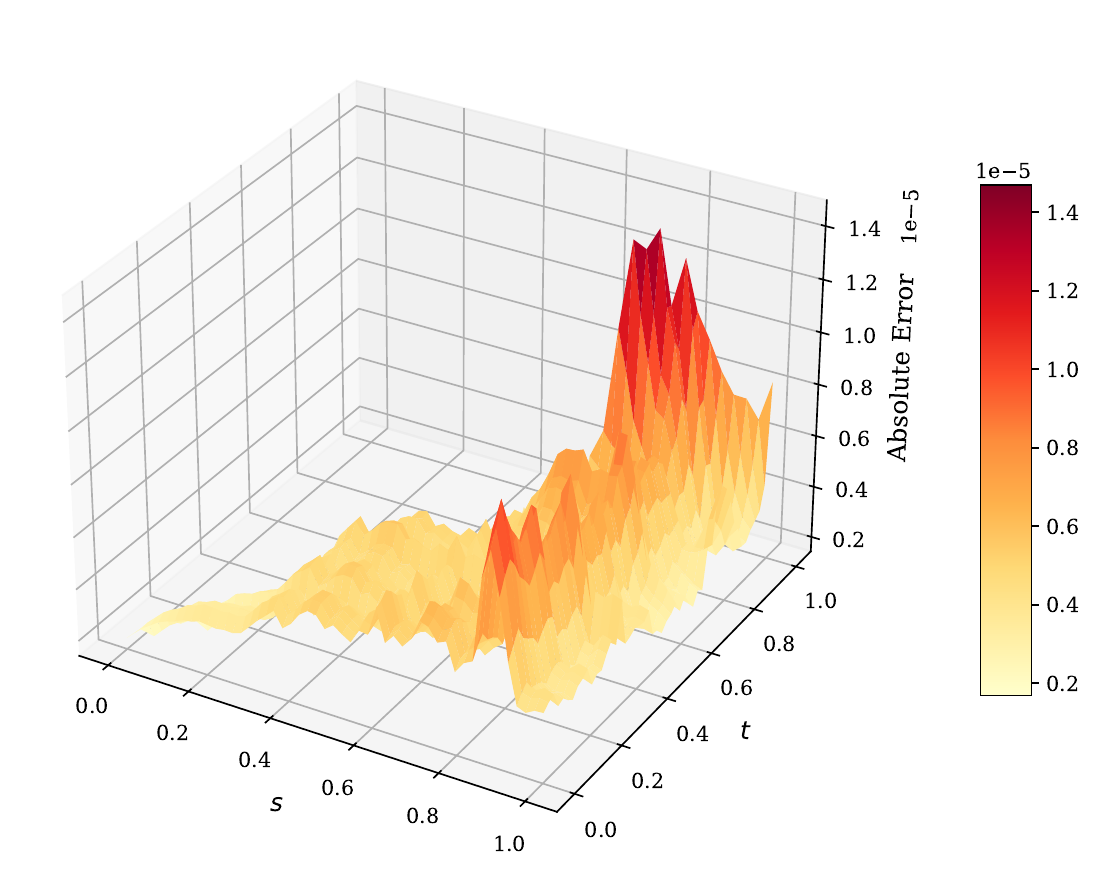}
        \caption{Surface plot of MSE between \( \widehat{Z} \) and \( Z \).}
    \end{subfigure}
    \hfill
    \begin{subfigure}[t]{0.32\textwidth}
        \centering
        \includegraphics[width=0.9\linewidth]{ 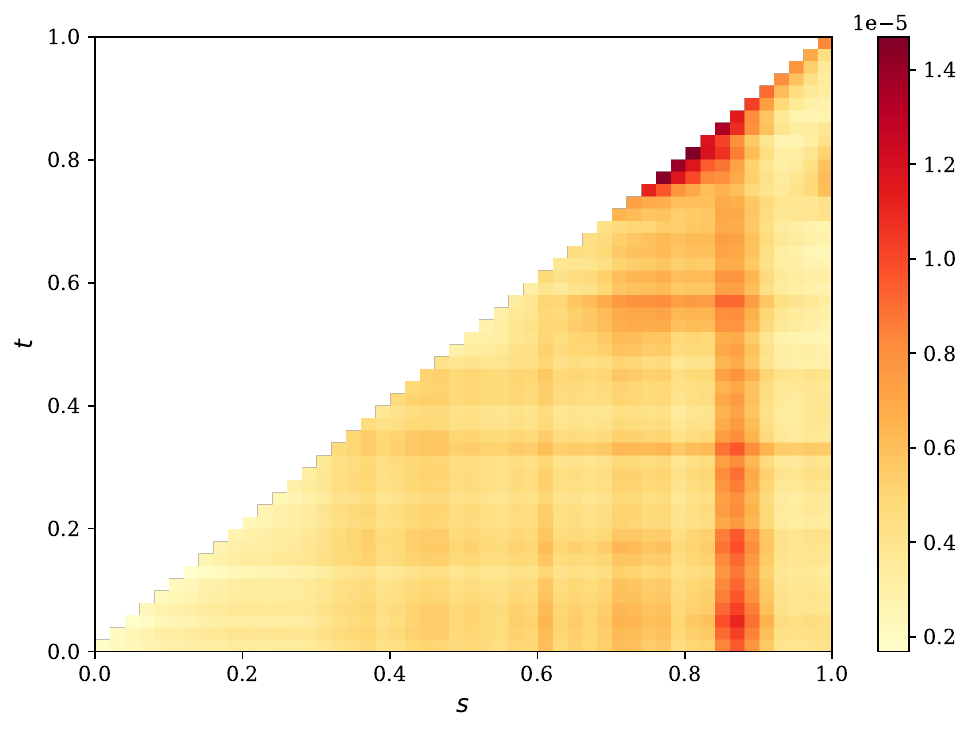}
        \caption{Heatmap of MSE between \( \widehat{Z} \) and \( Z \).}
    \end{subfigure}
    \caption{Visualization of estimation errors.}
    \label{fig:error_visualization2}
\end{figure}

\subsection{Recursive Utility with Nonlinear Wealth Effects}\label{ex_nonlinear}

In this example, we extend the recursive utility framework to capture nonlinear wealth effects and state-dependent risk attitudes in a multi-asset economy. Let the forward state variable represent the evolution of $d$ risky assets modeled as arithmetic Brownian motions:
\begin{equation*} 
X(t) = x_0 + \mu t + \sigma B(t) \in \mathbb{R}^d,
\end{equation*}
where $\mu \in \mathbb{R}^d$ is the drift vector, $\sigma \in \mathbb{R}^{d \times d}$ is the diagonal volatility matrix, and $(B(t))_{t \ge 0} \in \mathbb{R}^d$ is a standard $d$-dimensional Brownian motion.

We consider the following BSVIE:
\begin{equation}
\begin{aligned}
Y(t) &= t \sin(k \sum_{i=1}^d X^i(T)) 
+ \int_t^T \left( 
    \frac{t}{2} \sum_{i=1}^d \sin( X^i(s)) \| \sigma \|^2 
    - \mu^\top \sigma^{-1} Z(t,s)
\right) ds 
- \int_t^T Z(t,s) \, dB(s)
\end{aligned}
\end{equation}
whose solution is, by direct calculation \cite{andersson2025deep}, 
\begin{equation}
Y(t) = t \sin \left( \sum_{i=1}^d  X^i(t)\right), 
\qquad 
Z(t,s) = t \cos \left(\sum_{i=1}^d  X^i(s) \right) \, \sigma \mathbf{1}_d
\label{eq:sol_nonlinear}
\end{equation}
This BSVIE models an agent with cyclical preferences over total portfolio wealth: the terminal condition captures bounded sensitivity to aggregate wealth, where the agent's valuation oscillates with wealth. The driver introduces asset-specific nonlinear rewards and adjusts for exposure to market risk. 

\begin{remark}
    This example satisfies Assumptions \eqref{ass:scheme_bsig} and \eqref{ass:scheme_gf} as the forward process has constant coefficients, the terminal term is square-integrable and Lipschitz in the state variable, and the driver is smooth, Lipschitz in the state 
variable, independent of the remaining arguments, and H\"older-continuous in time. With $\sigma$ invertible, all required regularity conditions are fulfilled, and the 
theoretical results on well-posedness and numerical convergence apply to this setting.
\end{remark}

\subsubsection{Numerical Implementation}
We now present the numerical results obtained by applying to this example the neural network-based algorithm described in Section \ref{sec:numerical_implementation}. We work in dimension $d=5$, consider a time horizon \( T = 1 \), drift coefficient \( \mu = (0.07,\, 0.085,\, 0.1,\, 0.115,\, 0.13) \), volatility \( \sigma = \text{diag}(0.24,\, 0.27,\, 0.3,\, 0.33,\, 0.36) \), initial condition \( X_0 = 1.0 \).
The neural network's architecture and hyperparameters are the same as described in Section \ref{sec:algo}. The training follows Algorithm \ref{algo:BSVIE}.
The forward geometric Brownian motion is generated using the Euler-Maruyama scheme. The backward processes \( Y(t) \), \( Z(t,s) \), are learned simultaneously through the neural representation, trained over multiple trajectories to ensure statistical robustness. The network was trained on a GPU (AMD Instinct MI250X), and the typical training time for the presented example was approximately $8$ minutes and $34$ seconds.\\
Below, we report the key outcomes of the simulation. Figure \ref{fig:Y2_1b} compares the  learned solution \(\widehat{Y}_i, i = 0, \ldots, N\) with the analytical expression of \(Y\) derived in Equation \eqref{eq:sol_nonlinear}, evaluated at discrete time steps \(t_i\). Figure \ref{fig:surf2_1b} shows a comparison between the first component of the learned kernel \(\widehat{Z}(t_i, t_j)\) and the reference solution \(Z(t_i, t_j)\), both evaluated over the domain \(0 \leq i \leq j < N\). Figure \ref{fig:fixedk_1b} further compares the first component of the sequences \((\widehat{Z}_{i,j})_{i=0}^j\) with their reference counterparts \((Z(t_i, t_j))_{i=0}^j\),  illustrating how the learned kernel varies along the first time axis for fixed values of $s=t_j$.  Similarly, \ref{fig:fixedn_1b} shows a comparison between the first component of \( (\widehat{Z}_{i,j})_{j=i}^{N-1}\) and \( (Z(t_i,t_j))_{j=i}^{N-1}\) for selected values of $i$, showing slices of the both kernels for fixed values of $t = t_i$.  \\

Figure \ref{fig:loss_subplots2_1b} illustrates the evolution of the training loss across epochs for selected time steps. Table \ref{tab:err2_1b} reports the approximation errors, evaluated according to the metrics defined in Equations \eqref{eq:err} and \eqref{eq:rel_err}. To complement these results, Figure \ref{fig:error_visualization2_1b} displays the temporal evolution of the mean squared errors for both the \( Y \) and \( Z \) components.
\begin{table}[h!]
\centering
\begin{tabular}{|c|c|c|}
\hline
 & $L^2$ Error & $L^2$ Relative Error \\
\hline
$Y$ & $6.06 \times 10^{-5}$ & $3.22 \times 10^{-4}$ \\
$Z$ & $1.47 \times 10^{-5}$ & $2.16 \times 10^{-3}$ \\
\hline
\end{tabular}
\caption{$L^2$ Error and Relative Error for $Y$ and $Z$}
\label{tab:err2_1b}
\end{table}

\begin{figure}[H]
\centering
\includegraphics[width=0.7\textwidth]{ 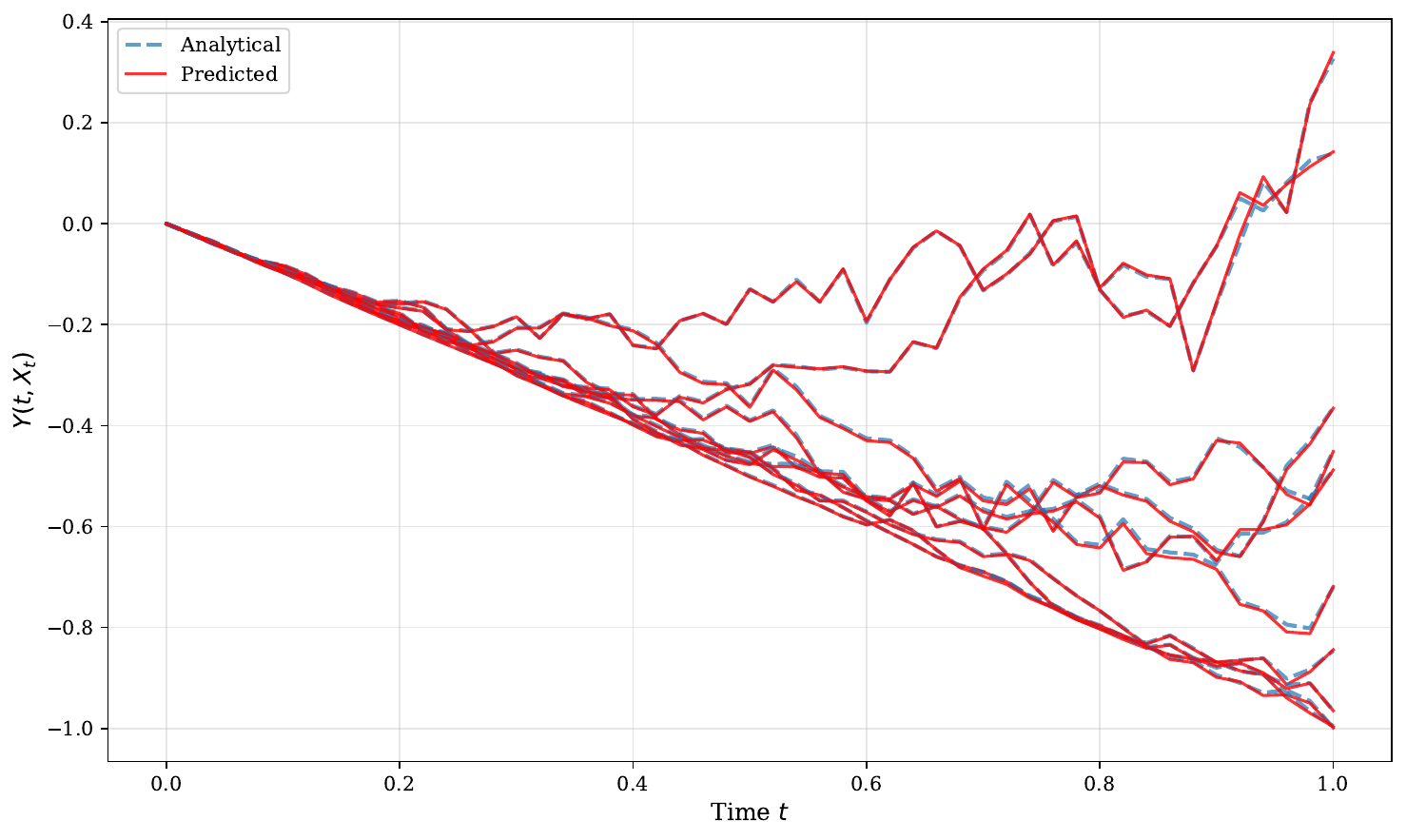}
\caption{Comparison of the learned \(Y(t)\) (red) with the analytical solution (dashed blue).}
\label{fig:Y2_1b}
\end{figure}

\begin{figure}[H]
    \centering
    \begin{subfigure}[t]{0.32\textwidth}
        \includegraphics[width=0.9\textwidth]{ 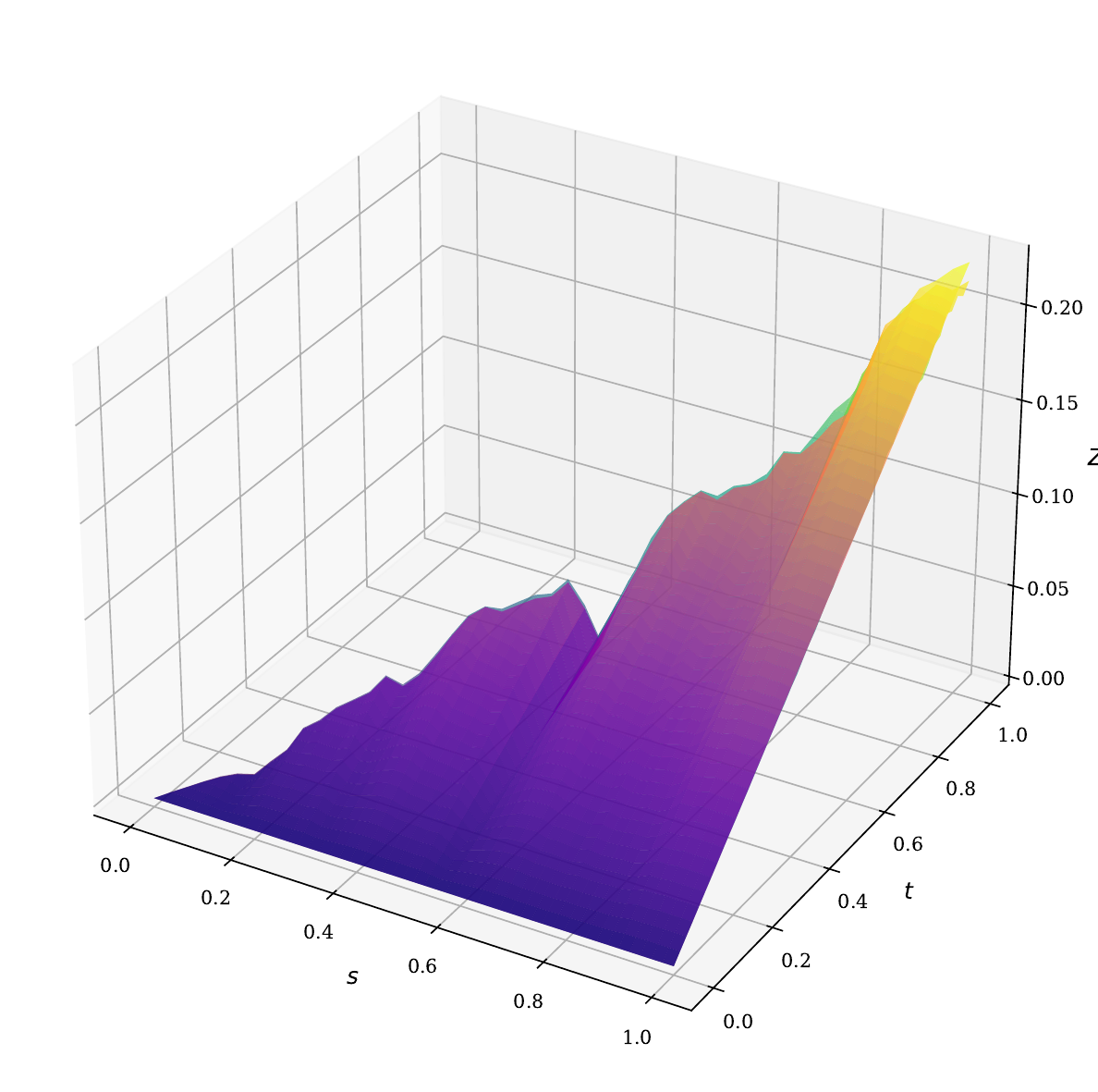}
        \caption{Surface plot of the learned kernel \( Z^1_{i,j} \), overlapped with the reference surface \( Z^1(t,s) \) on the evaluation grid.}
        \label{fig:surf2_1b}
    \end{subfigure}
    \hfill
    \begin{subfigure}[t]{0.32\textwidth}
        \includegraphics[width=\textwidth]{ 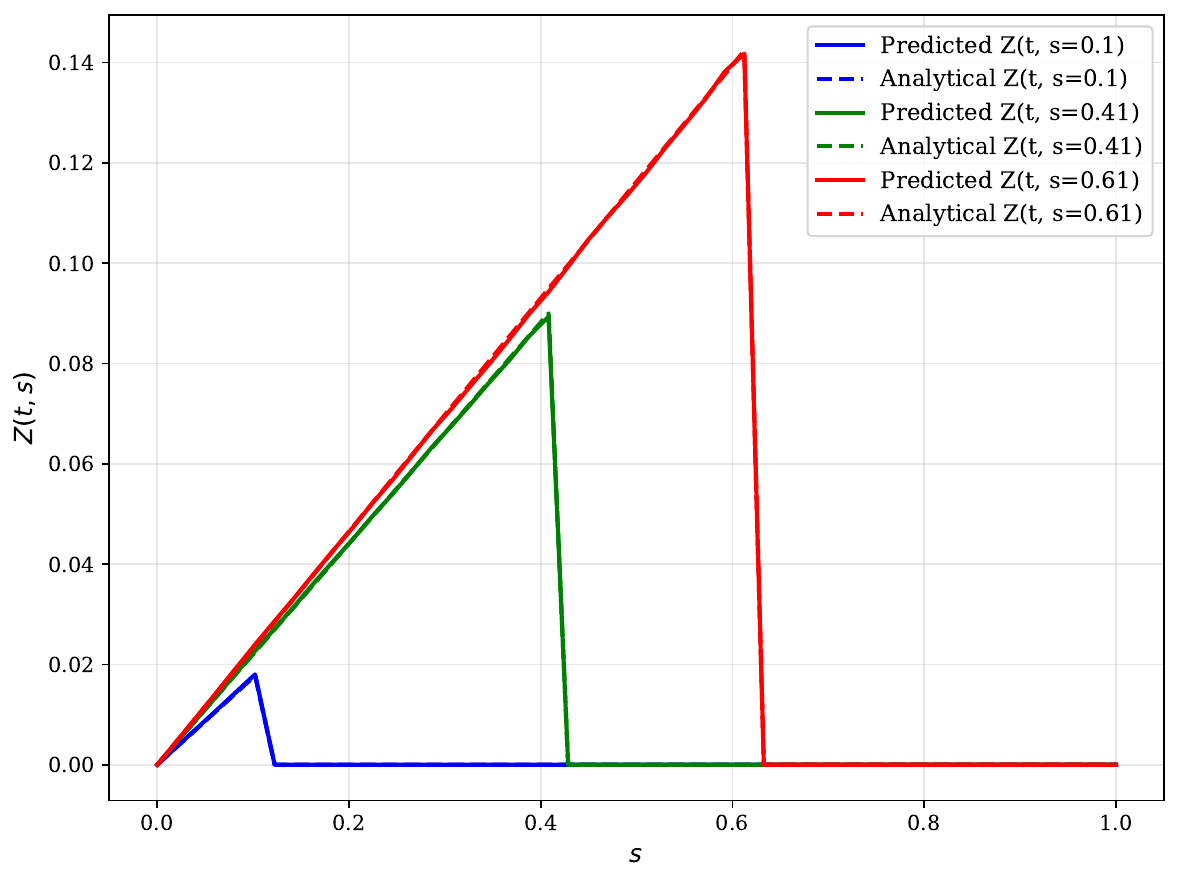}
        \caption{Comparison of $(\widehat{Z}^1_{i,j})_{i=0}^j$  with the corresponding values of $(Z^1(t_i,t_j))_{i=0}^j$ for selected values of $j$.}
        \label{fig:fixedk_1b}
    \end{subfigure}
    \hfill
    \begin{subfigure}[t]{0.32\textwidth}
        \includegraphics[width=\textwidth]{ 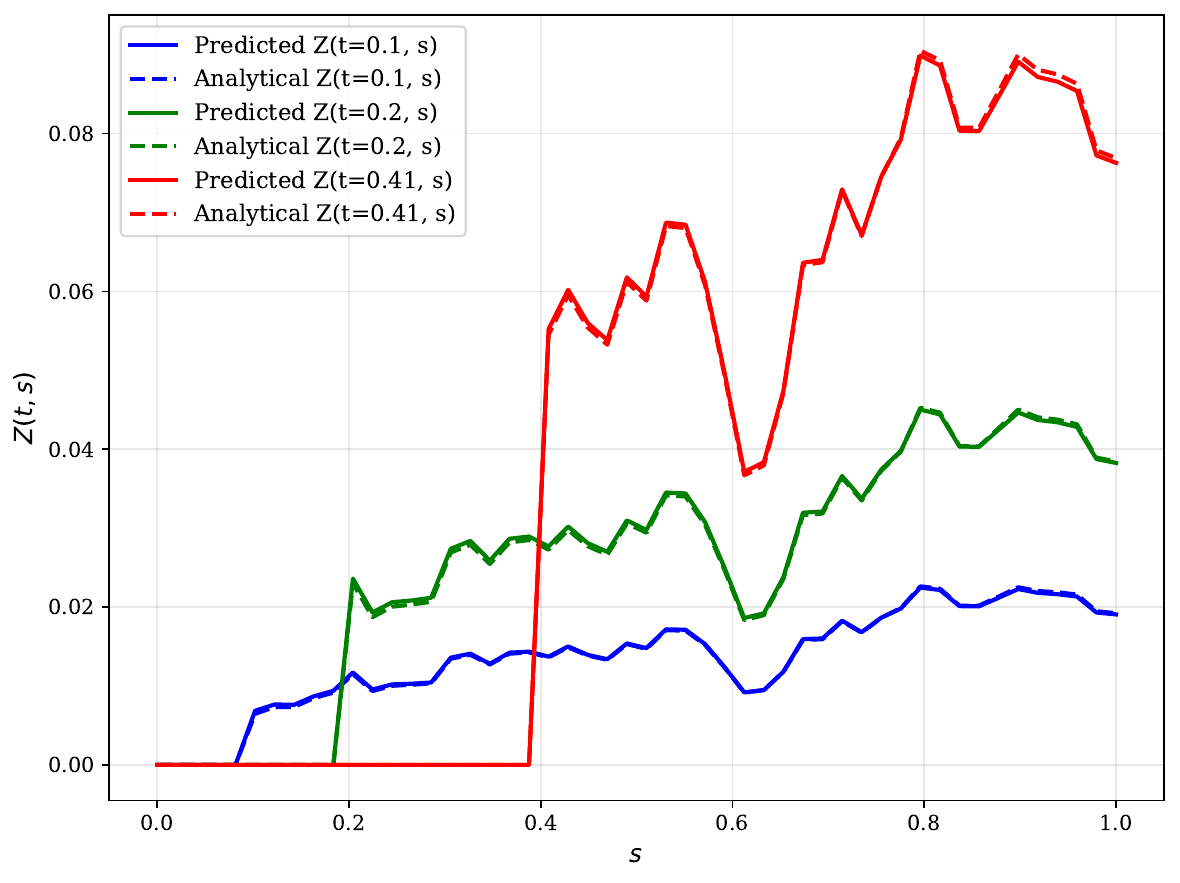}
        \caption{Comparison of $(\widehat{Z}^1_{i,j})_{j=i}^{N-1}$  with the corresponding values of $(Z(t_i,t_j))_{j=i}^{N-1}$ for selected values of $i$.}
        \label{fig:fixedn_1b}
    \end{subfigure}
    \caption{Visualization of the structure of the first component of the learned kernel \( Z_{i,j} \) and its continuous counterpart \( Z(t,s) \).}
    \label{fig:Z_plots2_1b}
\end{figure}

\begin{figure}[H]
    \centering
    \begin{adjustbox}{max width=\textwidth}
    \begin{subfigure}[t]{0.32\textwidth}
        \centering
      \includegraphics[width=\linewidth]{ 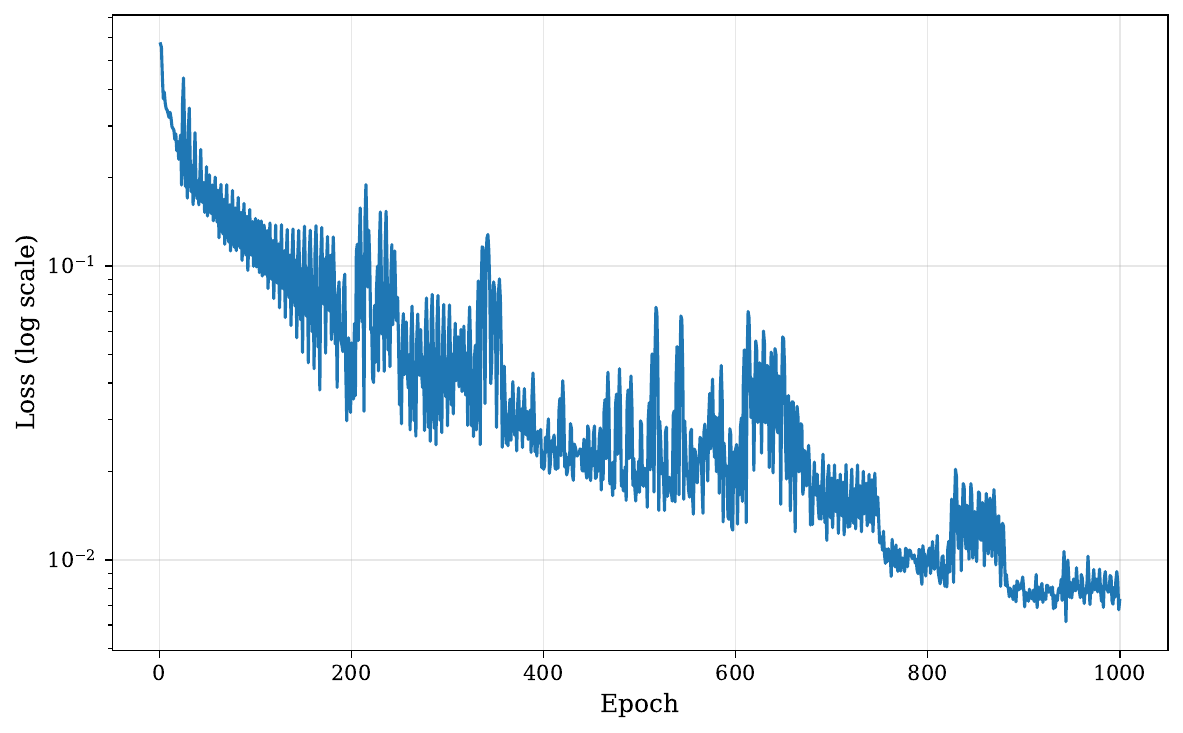}
        \caption{Loss at time step \( 50 \).}
    \end{subfigure}
    \hspace{0.01\textwidth}
    \begin{subfigure}[t]{0.32\textwidth}
        \centering
        \includegraphics[width=\linewidth]{ 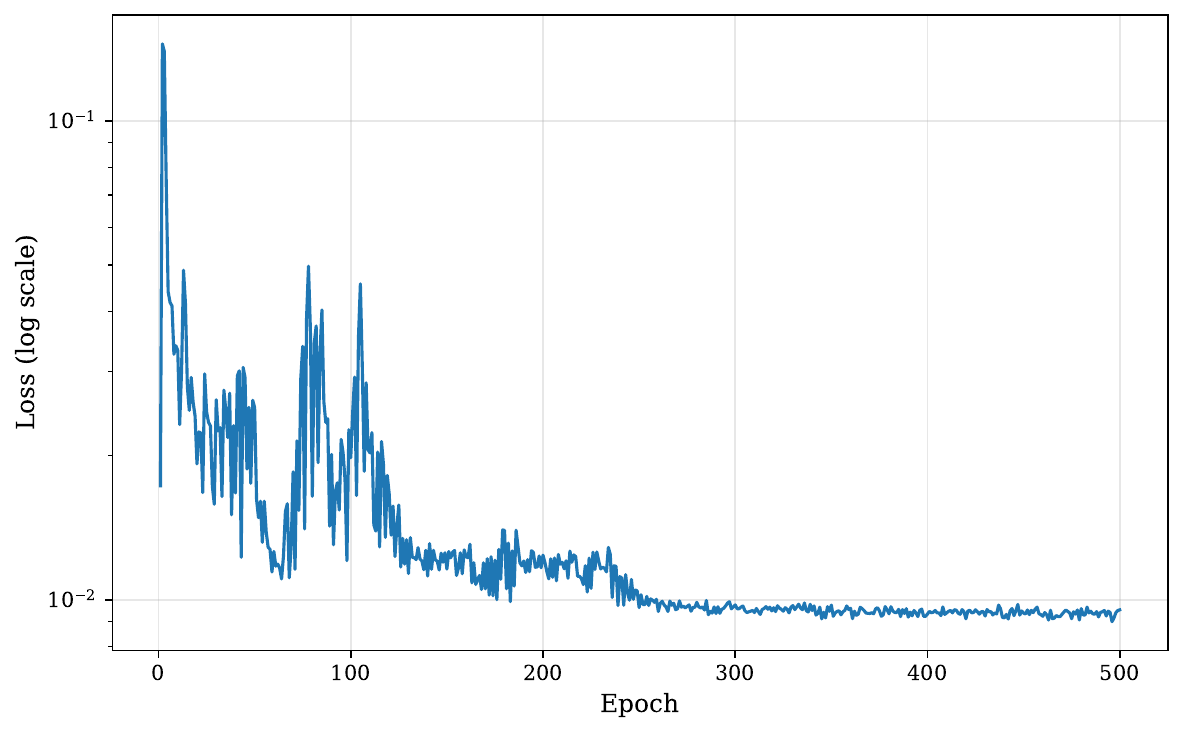}
        \caption{Loss at time step \( 25 \).}
    \end{subfigure}
    \hspace{0.01\textwidth}
    \begin{subfigure}[t]{0.32\textwidth}
        \centering
        \includegraphics[width=\linewidth]{ 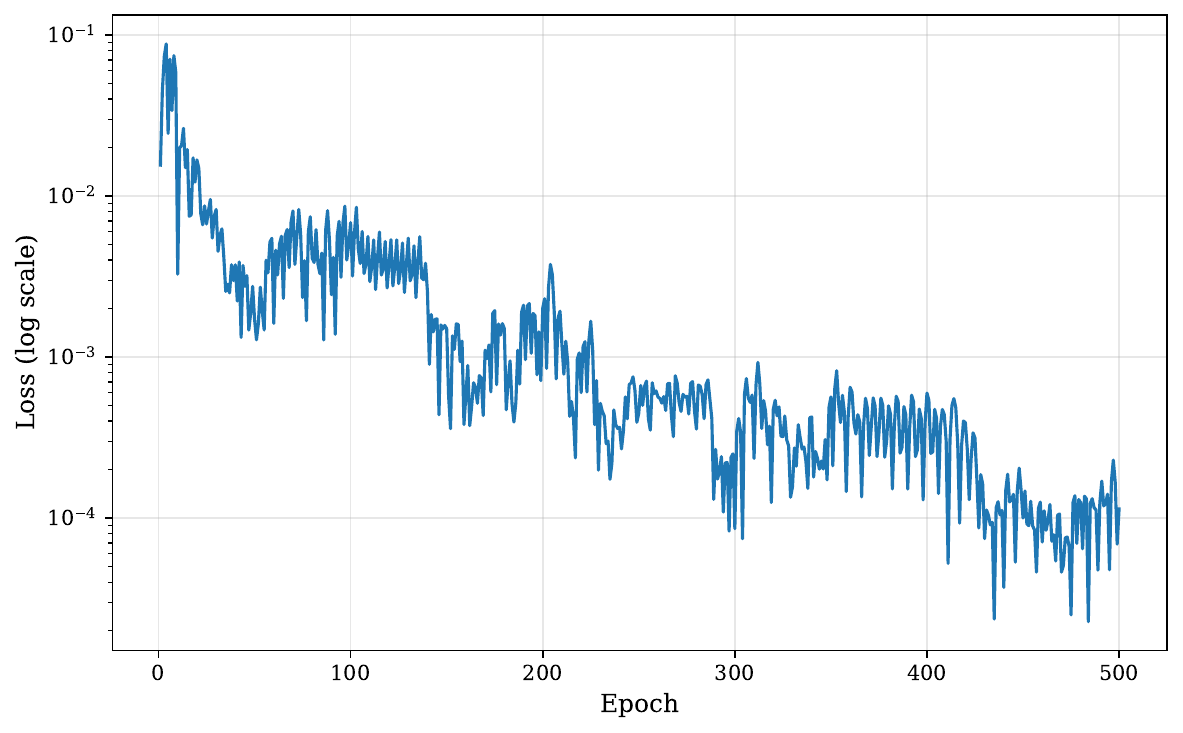}
        \caption{Loss at time step \( 0 \).}
    \end{subfigure}
     \end{adjustbox}
    \caption{Algorithm loss across iterations evaluated at selected time steps.}
    \label{fig:loss_subplots2_1b}

\end{figure}

\begin{figure}[H]
    \centering
    \begin{subfigure}[t]{0.32\textwidth}
        \centering
        \includegraphics[width=\linewidth]{ 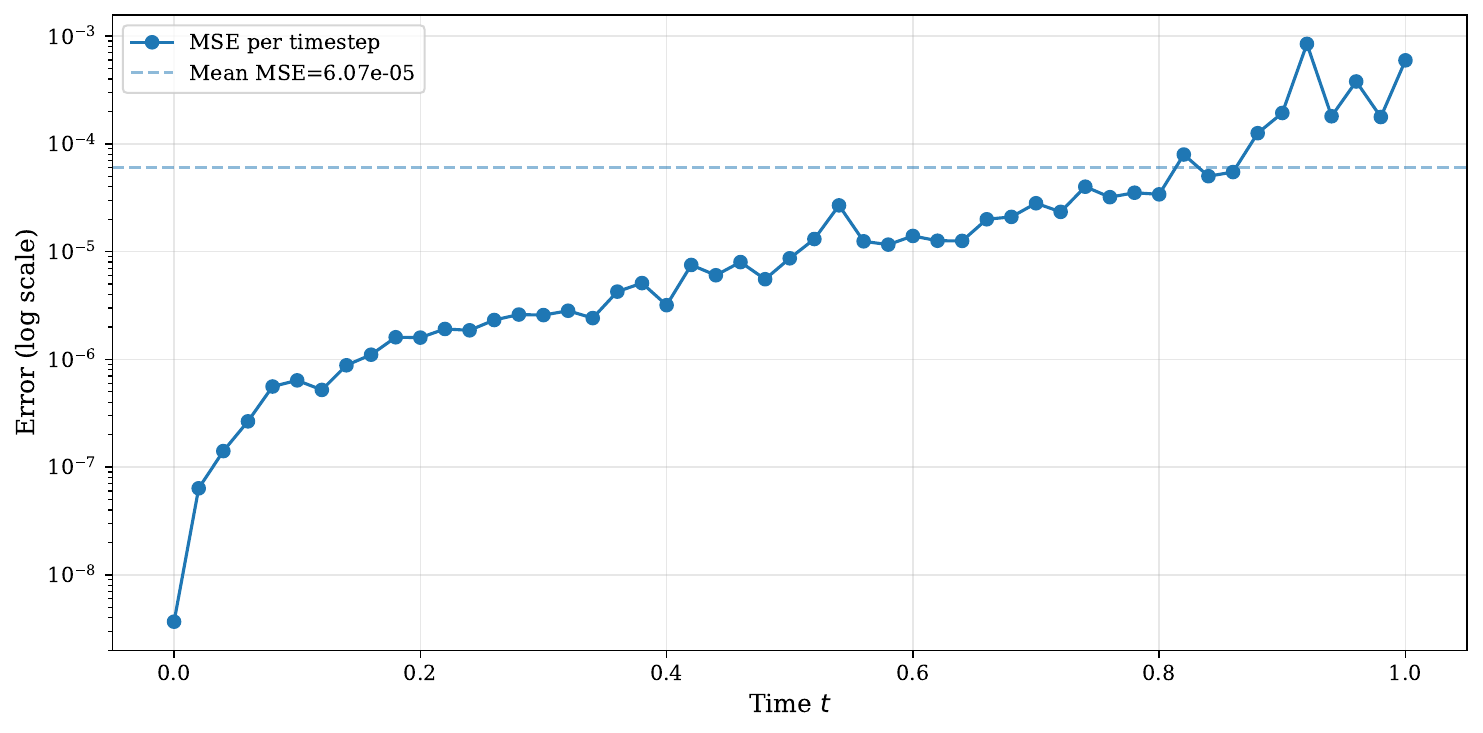}
        \caption{MSE of \( \widehat{Y} \) vs. \( Y \) over time.}
    \end{subfigure}
    \hfill
    \begin{subfigure}[t]{0.32\textwidth}
        \centering
        \includegraphics[width=0.9\linewidth]{ 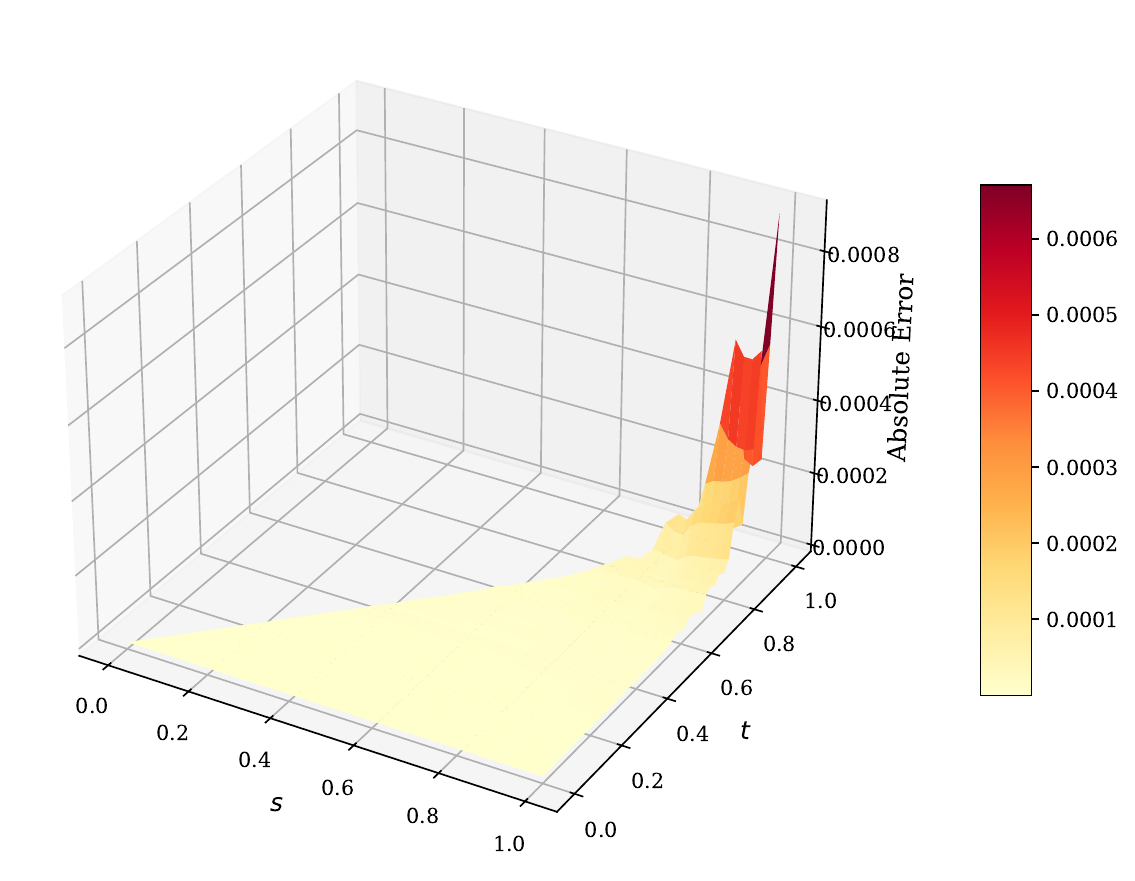}
        \caption{Surface plot of MSE between \( \widehat{Z} \) and \( Z \).}
    \end{subfigure}
    \hfill
    \begin{subfigure}[t]{0.32\textwidth}
        \centering
        \includegraphics[width=0.9\linewidth]{ 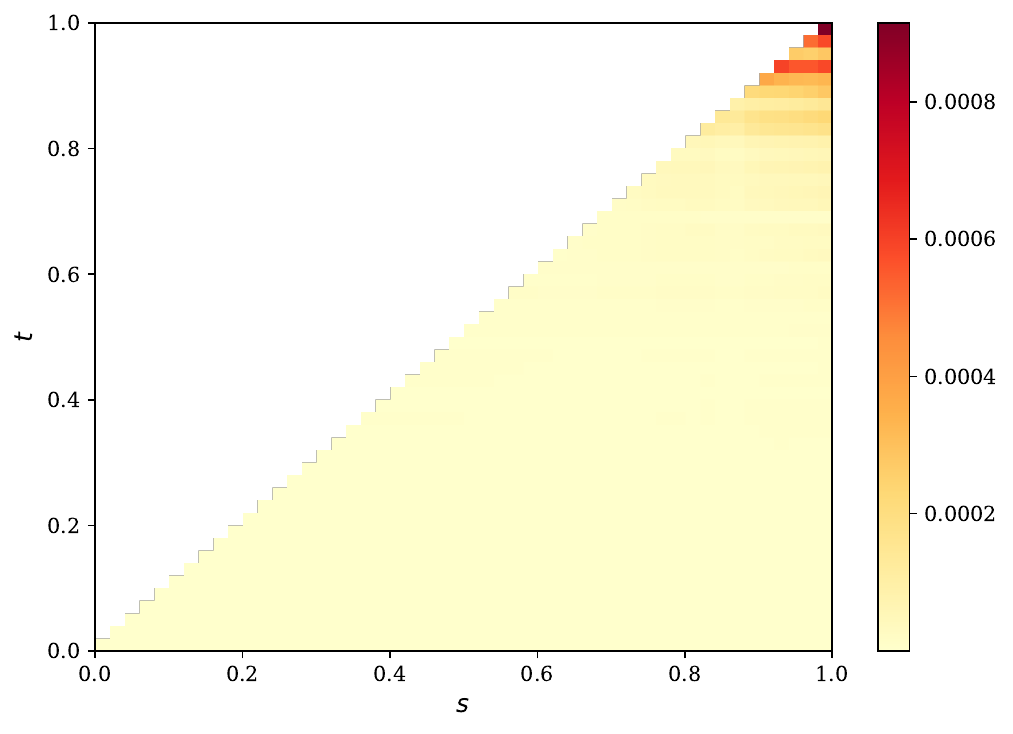}
        \caption{Heatmap of MSE between \( \widehat{Z} \) and \( Z \).}
    \end{subfigure}
    \caption{Visualization of estimation errors.}
    \label{fig:error_visualization2_1b}
\end{figure}

\subsubsection{Sensitivity Analysis}
The architectural choices presented in Section \ref{sec:scheme}, and used consistently across all numerical examples, were determined through systematic hyperparameter tuning. We explored variations in network width (10-200 neurons), depth (2-4 hidden layers), activation functions (ReLU, sigmoid, tanh), optimizers (Adam, AdamW), and learning rate schedulers (exponential decay, cosine annealing, reduce-on-plateau). We finally selected the minimal architecture that achieved satisfactory accuracy while maintaining computational efficiency. \\
A comprehensive sensitivity analysis over all architectural parameters would be prohibitively expensive, given the large number of parameters and their possible combinations. 
Instead, we perform a targeted evaluation on the nonlinear example from Section \ref{ex_nonlinear}, and we present a focused sensitivity analysis to highlight the robustness of our architectural choices.\\

We systematically varied the sizes of the hidden layers in the networks, taking 
$h_Y \in \{20,40,60,80\}$ while maintaining $h_Z = 2h_Y$, and the batch size 
$M \in \{2^{11}, 2^{12}, 2^{13}\}$. 
Figure \ref{fig:sensitivity_analysis} presents the resulting performance in terms of accuracy 
(measured by the MSE in $Y$ and $Z$) and computational time 
across these configurations.\\
This reveals that the errors can vary by roughly two orders of magnitude and that the network width for larger batch sizes has a non-monotonic effect on accuracy: for batch size $2^{13}$ the error in $Y$ initially decreases 
as the width increases from $h_Y=20$ to $h_Y=40$, 
but then rises at $h_Y=60$, suggesting potential overfitting 
or optimization challenges in wider networks. We note that such behavior may also stem from interactions with other hyperparameters (e.g., learning rate, weight decay) that were held fixed in this analysis, highlighting the complex interplay between architectural and training choices.\\
We notice that training time increases moderately with network width and more substantially with batch size, 
illustrating the computational trade-offs involved.\\
Regarding batch size selection, while $M=2^{13}$ achieved a slightly lower MSE for $Y$, the difference is negligible as both batch sizes yield errors of the same order of magnitude. Moreover, $M=2^{12}$ provides approximately $10\%$ reduction in training time. These considerations motivated our choice of $M=2^{12}$ as the optimal trade-off for all subsequent experiments.

\begin{figure}[H]
    \centering
    \includegraphics[width=0.9\linewidth]{ 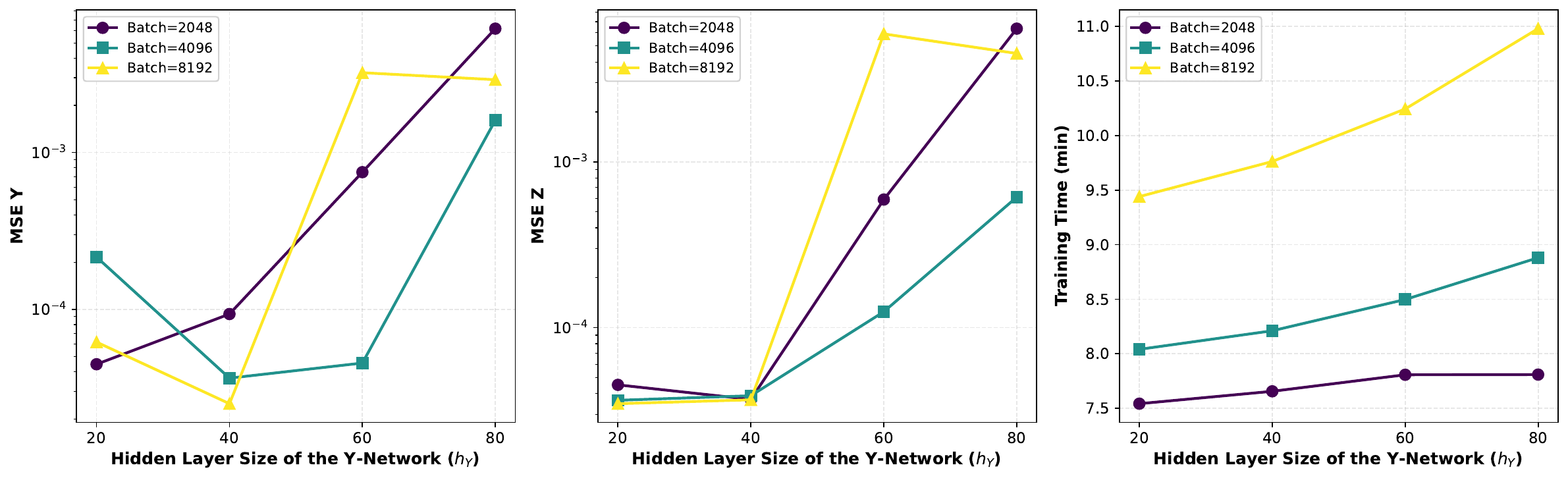}
    \caption{Performance sensitivity to network width $h_Y$, width ratio $h_Z = 2h_Y$, 
    and batch size $M$. 
    We represent MSE in $Y$ and $Z$ as well as training time for each configuration.}
    \label{fig:sensitivity_analysis}
\end{figure}

\section{Numerical Solution of RBSVIEs}
\label{sec:reflected}
We now extend the deep learning-based scheme introduced in Section \ref{sec:numerics} to address RBSVIEs. Such equations arise when the agent's utility is constrained to remain above a floor process, as motivated by regret aversion models. The setting we consider follows the formulation in \cite{agram2021class}, which proves existence and uniqueness for a class of continuous RBSVIEs driven by Brownian motion.

\vspace{1em}
\textbf{Problem Setting.}
We consider the following RBSVIE:
\begin{equation*}
\begin{cases}
Y(t) = g(t, X(T)) + \int_t^T f(t, s, X(s), Y(s), Z(t,s)) \, ds + \int_t^T K(t,ds) - \int_t^T Z(t,s) \, dB(s), \\\\
Y(t) \ge L(t).
\end{cases}
\end{equation*}

Here, the reflection process \(K(t,\cdot)\) is non-decreasing in \(s\), continuous in \(t\), and increases only when the constraint \(Y(t) = L(t)\) is binding (Skorokhod flatness condition that is: for all \( 0 \leq \alpha < \beta \leq T \),
    \[
        K(t, \alpha) = K(t, \beta) \quad \text{whenever } Y(u) > L(u) \text{ for all } u \in [\alpha, \beta], \quad \mathbb{P}\text{-a.s.}\] 
The process \(L(t)\) denotes a continuous lower barrier (the floor), and the triple \((Y, Z, K)\) constitutes the solution.

\vspace{1em}
\textbf{Numerical Scheme.}
We follow the structure of the deep BSDE solver, incorporating a projection step to enforce the reflection condition. This mirrors the RDBDP approach of \cite{hure2020deep} for variational inequalities. Specifically, for each \(i \in \{0, \ldots, N\}\), we construct neural networks \(\mathcal{Y}^i(\cdot; \xi)\) and \(\mathcal{Z}^{i}(\cdot, \cdot; \eta)\) to approximate \(Y_i\) and \(Z_i(t_i, t_j)\). The projection step enforces the constraint \(Y_i \ge L(t_i)\) through
\[
\widehat{Y}_i^k := \max\big\{ \mathcal{Y}^i(t_i, X_i^k; \xi),\, L(t_i) \big\}.
\]

\vspace{0.5em}
The full algorithm reads as follows:

\begin{algorithm}[H]
\caption{DeepRBSVIE: RBSVIE Solver}
\label{algo:RBSVIE}
\begin{algorithmic}[1]
\For{$i = N$ to $0$}
    \For{each epoch}
        \For{$k = 1$ to $M$}
            \State Simulate trajectory \((X_i^k)_{i=0}^N\) via Euler-Maruyama scheme.
            \State Predict \(\widetilde{Y}_i^j := \mathcal{Y}^i(t_i, X_i^j; \xi_i)\)
            \For{$j = i+1$ to $N$}
                \State Predict \(\widehat{Z}_i^j(t_i, t_j) := \mathcal{Z}^{i}(t_i, t_j, X_i^j, X_j^j; \eta_i)\)
            \EndFor
            \State Compute loss:
            \small
         \[
           \begin{aligned}
            \ell_i^k = \Bigg| \widetilde{Y}_i^k - \Bigg( & g(t_i, X_i^k, X_N^k) + \sum_{j=i}^{N-1} f(t_i, t_j, X_i^k, X_j^k, \widehat{Y}_j^k, \widehat{Z}_i^k(t_i, t_j)) \Delta t \\
            & - \sum_{j=i+1}^{N-1} \widehat{Z}_i^k(t_i, t_j) \Delta B_j^k \Bigg) \Bigg|^2
          \end{aligned}
        \]
        \EndFor
        \State Minimize loss: \(\ell_i = \frac{1}{M} \sum_{k=1}^M \ell_i^k\) using gradient descent.
        \State Update: \(\widehat{Y}_i^k := \max\{\widetilde{Y}_i^k, L(t_i)\}\)
    \EndFor
\EndFor
\State \textbf{Return} \(\{\widehat{Y}_i^k, \widehat{Z}_i^k(t_i, \cdot)\}_{i}\)
\end{algorithmic}
\end{algorithm}

\medskip
The projection step \(\widehat{Y}_i^k = \max\{ \widetilde{Y}_i^k, L(t_i) \}\) mimics the Skorokhod reflection in the discrete time setting. The convergence of such methods relies on approximation properties of the networks, stability of the discrete scheme, and control of the training error.

\subsection{Application: Recursive Utility with Regret Aversion Floors}

We consider an agent evaluating a terminal payoff $\phi$ using time-inconsistent preferences and being subject to \emph{regret aversion}. Specifically, the agent exhibits regret aversion, fearing that their utility process falls below a dynamically meaningful benchmark or "floor."  This leads to a RBSVIE, capturing both the memory effects of time inconsistent discounting and the behavioral floor constraint.

In behavioral economics, regret aversion is typically modeled through a constraint on the utility process. The agent's perceived utility shoud not fall below a floor $L(t)$, which might represent historical reference point, a guaranteed satisfaction threshold etc. To ensure this condition, we impose a reflection mechanism that minimally adjusts the process when the constraint becomes binding. \\

Let $\phi$ be a fixed $\mathcal{F}_T$-measurable terminal payoff. The agent evaluates this reward via a recursive process $Y(t)$ governed by the following RBSVIE:
\begin{equation*}
\begin{aligned}
Y(t) &= f(T - t)\, \phi 
+ \int_t^T K(t, ds) - \int_t^T Z(t,s) \, dB(s), \quad t \in [0,T], \\
Y(t) &\geq L(t). \\
\end{aligned}
\end{equation*}
Here the term $f(T - t)\, \phi$ represents a time inconsistent terminal reward, where $f \colon \bR^+ \to \bR^+$ is a time inconsistent discounting kernel, e.g.  $f(x) = \frac{1}{1 + \delta x}$ representing hyperbolic discounting. 
The lower barrier $L(t)$ serves as the regret-aversion floor, representing the agent's minimum acceptable utility level, which may be a fixed threshold, a moving average of past valuations, or a proportion of expected reward. To enforce the constraint $Y(t) \geq L(t)$, the model includes a two parameter reflection field $K(t,s)$, which is non-decreasing in $s$ for each fixed $t$ and acts only when the constraint is binding. The stochastic integral $\int_t^T Z(t,s)\, dB(s)$ accounts for volatility in the backward component.

The process $Y(t)$ represents the agent's valuation of the delayed payoff $\phi$, computed recursively in a time-inconsistent manner due to the Volterra structure. To prevent violations of the floor $L(t)$, a reflection mechanism is introduced via $K(t,s)$, which ensures that the process $Y(t)$ is pushed upward only when necessary. The flatness condition ensures that $K(t,s)$ increases only on sets where $Y(t) = L(t)$, ensuring minimal intervention.

\begin{example}[Option Payoff with Behavioral Floor] \label{ex}
Consider an underlying forward process following $d-$dimensional geometric Brownian motion 
\[
dX(t) = \mu X(t) \, dt + \sigma X(t) \, dB(t), \quad X(0) = x_0
\]
with given expected return \( \mu \in \bR^d \) and diagonal volatility matrix \( \sigma \in \bR^{d \times d} \).
Suppose $f(x) = \frac{1}{1 + x}$, $\phi = \max \left ( \frac{1}{d}\sum_{i=1}^d X^i(T) - K, 0\right)$, and $L(t) = 0.05$. In this case, $\phi$ represents the payoff of a European call option on the average of the $d$ underlying assets (i.e., an arithmetic basket call), and the agent evaluates it using a hyperbolically discounted, path-dependent utility process. The lower bound $L(t)$ represents a regret-aversion floor, insisting that the perceived utility $Y(t)$ remain at least $5\%$ of a nominal utility level. The reflection field $K(t,s)$ acts to maintain this constraint over the time interval $[t,T]$ while preserving the structure of the valuation process.
\end{example}

\subsection{Numerical Implementation}

For the numerical simulation of Example \ref{ex}, we consider a time horizon \( T = 1 \) and $d=5$. We set $\mu = (0.07, 0.085, 0.1, 0.115, 0.13), \sigma= \text{diag}(0.16,0.18,0.2,0.22,0.24)$ and initial condition \( X(0) = 1.0 \). The strike price of the option is fixed at \( K = 1.0 \), and the regret-aversion floor is set as a constant \( L(t) = 0.05 \). 

To approximate the solution of the RBSVIE, we employ a neural network-based method trained using the same architecture and hyperparameters described in Section \ref{sec:numerical_implementation}, where we detailed the general learning setup for BSVIEs. The training follows Algorithm \ref{algo:RBSVIE}, which handles the presence of the reflection term.

The simulation is carried out on a uniform time grid with \( N = 50 \) time steps. The forward geometric Brownian motion is generated using the Euler-Maruyama scheme. The backward processes \( Y(t) \), \( Z(t,s) \), and the action of the penalization field approximating the reflection \( K(t,s) \) are learned simultaneously through the neural representation, trained over multiple trajectories.

Below, we report the key outcomes of the simulation. In Figure~\ref{fig:Yrefl}, we show three sample paths of the approximated recursive utility under regret aversion. The effect of the floor constraint is evident, as the process is pushed upward whenever it approaches the prescribed threshold. In Figure~\ref{fig:Zrefl}, we present the corresponding first component of the 
$Z$ surfaces for the same three samples across the discretized 
$Z(t,s)$ domain. Finally, in Figure \ref{fig:loss_subplots_refl} a plot of the loss function across training epochs for different time steps is presented, demonstrating convergence and stability of the learning scheme.

\begin{figure}[htbp]
    \centering
\includegraphics[width=0.6\textwidth]{ 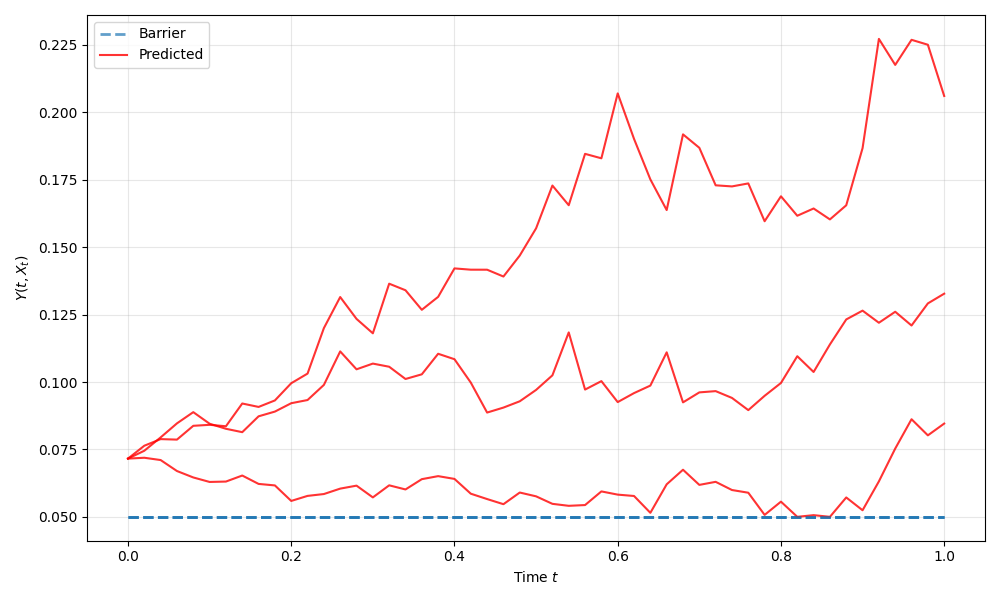}
    \caption{Three sample plots of the learned \(\widehat{Y}\) (continuous line) and reflecting barrier (dashed line).}
    \label{fig:Yrefl}
\end{figure}
\begin{figure}[htbp]
    \centering
    \includegraphics[width=0.8\textwidth]{ 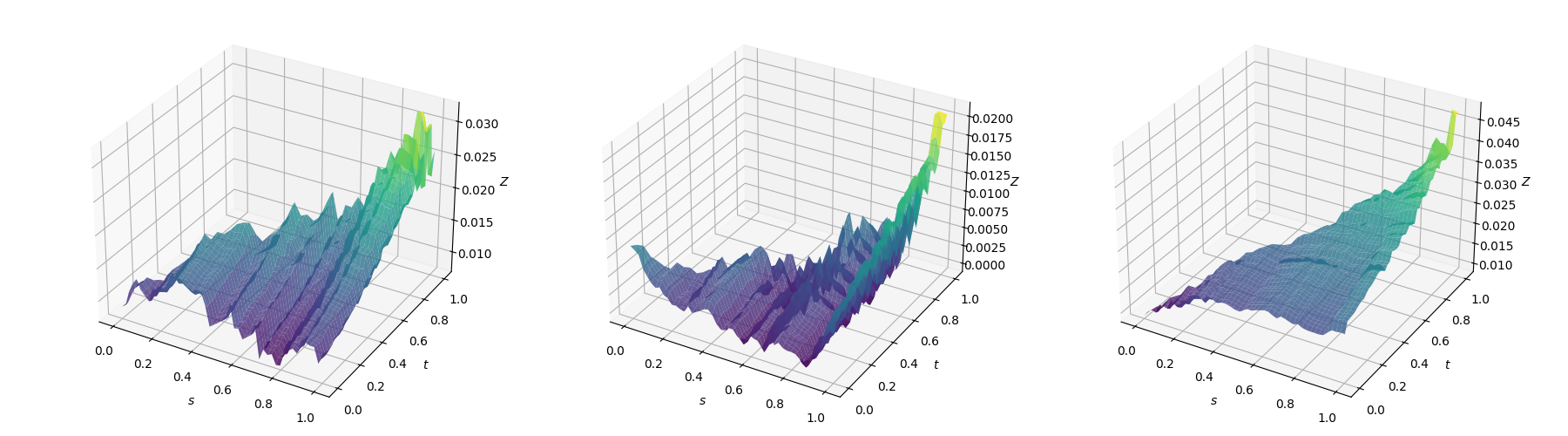}
    \caption{Three sample plots of the first component of the learned kernel \(Z_{n,k}\).}
    \label{fig:Zrefl}
\end{figure}

\begin{figure}[H]
    \centering
    \begin{adjustbox}{max width=\textwidth}
    \begin{subfigure}[t]{0.32\textwidth}
        \centering
      \includegraphics[width=\linewidth]{ 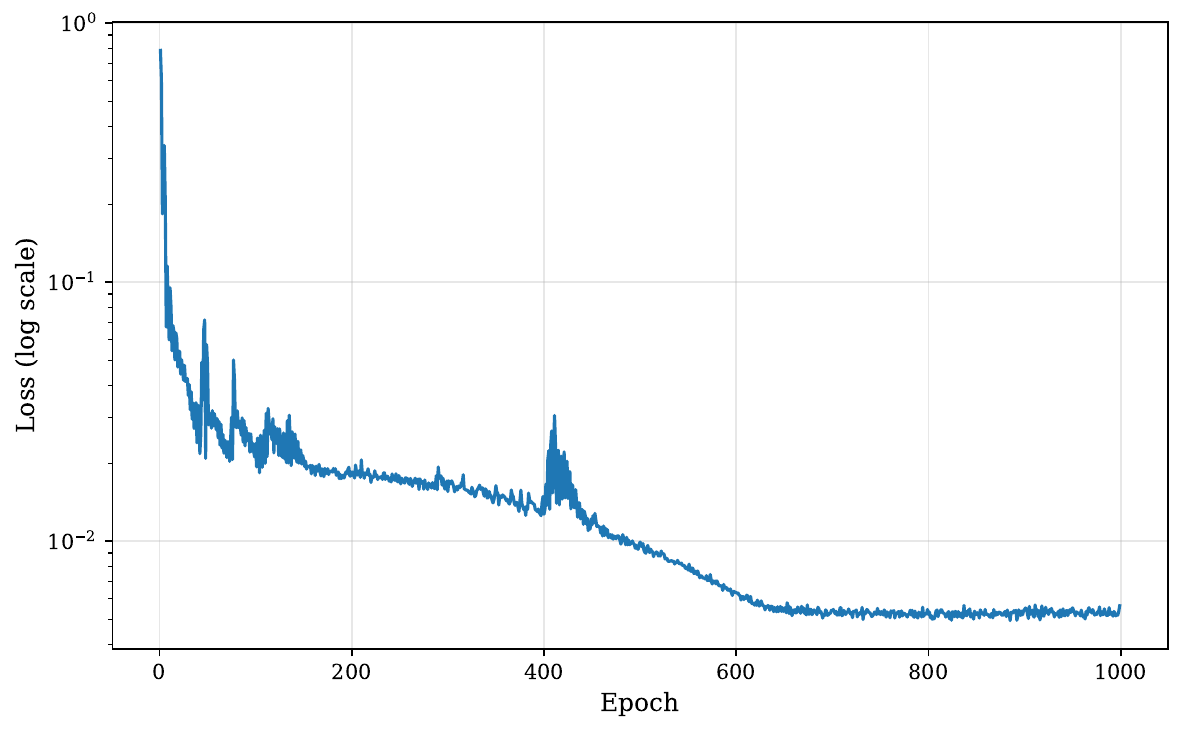}
        \caption{Loss at time step \( 50 \).}
    \end{subfigure}
    \hspace{0.01\textwidth}
    \begin{subfigure}[t]{0.32\textwidth}
        \centering
        \includegraphics[width=\linewidth]{ 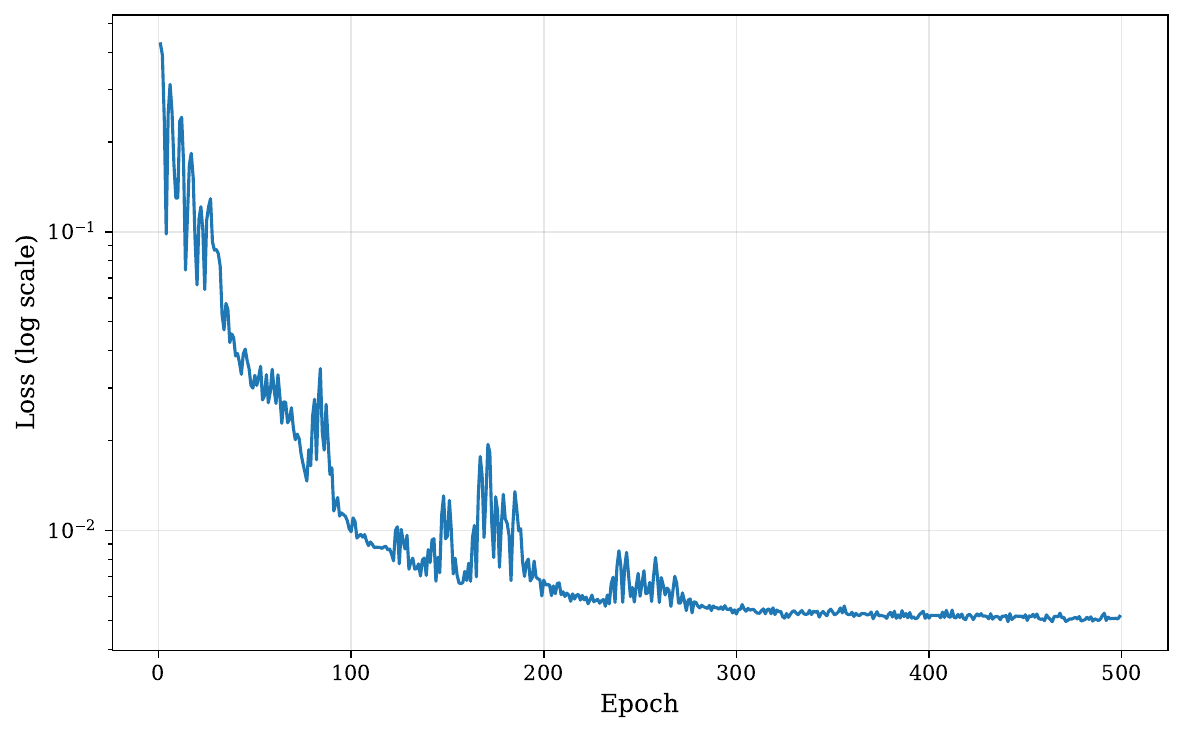}
        \caption{Loss at time step \( 25 \).}
    \end{subfigure}
    \hspace{0.01\textwidth}
    \begin{subfigure}[t]{0.32\textwidth}
        \centering
        \includegraphics[width=\linewidth]{ 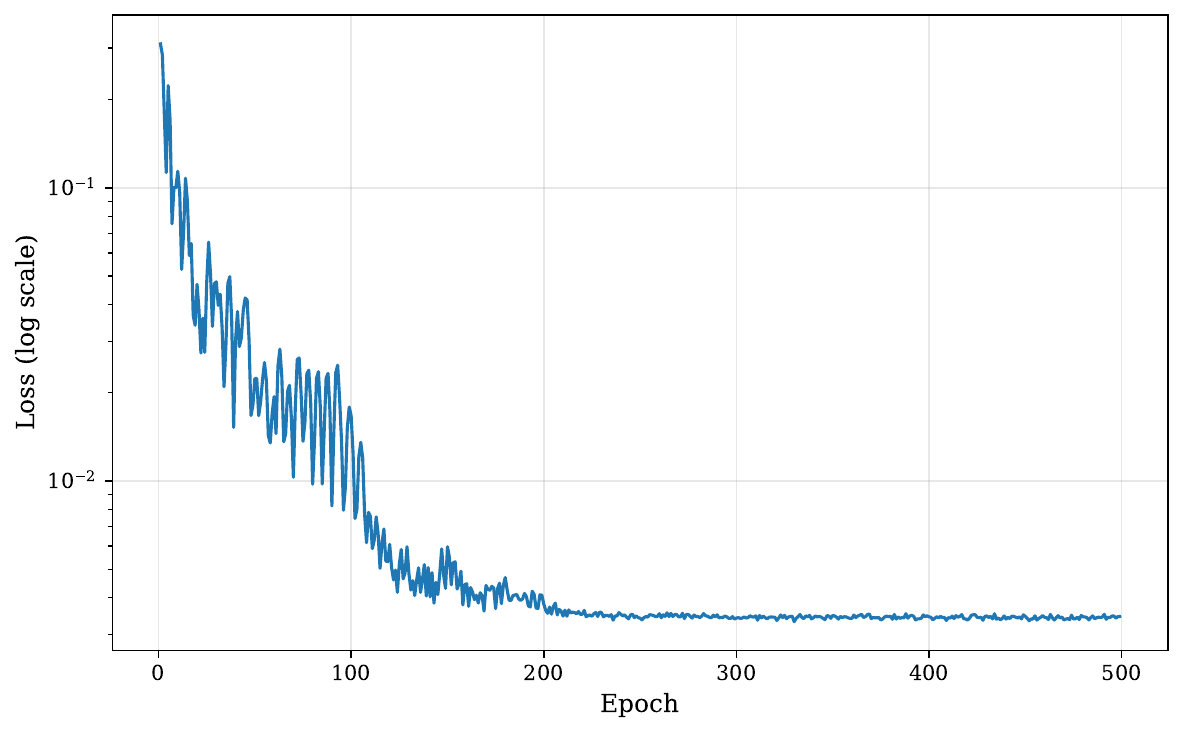}
        \caption{Loss at time step \( 0 \).}
    \end{subfigure}
     \end{adjustbox}
    \caption{Algorithm loss across iterations evaluated at selected time steps.}
    \label{fig:loss_subplots_refl}

\end{figure}

\paragraph{Acknowledgments.}
All simulations were performed on the Dardel HPC system. The computations were enabled by resources provided by the National Academic Infrastructure for Supercomputing in Sweden (NAISS).

\bibliographystyle{amsplain}
\bibliography{references}

\end{document}